\documentclass[11pt,letterpaper,reqno]{amsart}
\usepackage[margin=1in]{geometry}
\usepackage{amsmath,amssymb,amsthm}
\usepackage[hidelinks]{hyperref}
\usepackage{comment}
\usepackage{aliascnt}
\usepackage[capitalise,noabbrev]{cleveref}
\usepackage{mathrsfs}
\usepackage{empheq}
\usepackage{xcolor}
\usepackage{enumitem}

\newenvironment{deferredproof}[1]
  {%
    \hypertarget{proof:#1}{}%
    \begin{proof}[Proof of \Cref{#1}]%
  }
  {%
    \end{proof}%
  }

\newcommand{\proofref}[2]{%
  \hyperlink{proof:#1}{the proof in \Cref*{#2}}%
}

\numberwithin{equation}{section}

\theoremstyle{plain}
\newtheorem{theorem}{Theorem}[section]

\newaliascnt{proposition}{theorem}
\newtheorem{proposition}[proposition]{Proposition}
\aliascntresetthe{proposition}

\newaliascnt{lemma}{theorem}
\newtheorem{lemma}[lemma]{Lemma}
\aliascntresetthe{lemma}

\newaliascnt{corollary}{theorem}

\aliascntresetthe{corollary}

\theoremstyle{definition}

\newaliascnt{definition}{theorem}

\aliascntresetthe{definition}

\newaliascnt{example}{theorem}

\aliascntresetthe{example}

\theoremstyle{remark}

\newaliascnt{remark}{theorem}
\newtheorem{remark}[remark]{Remark}
\aliascntresetthe{remark}

\Crefname{theorem}{Theorem}{Theorems}
\crefname{theorem}{theorem}{theorems}

\Crefname{proposition}{Proposition}{Propositions}
\crefname{proposition}{proposition}{propositions}

\Crefname{lemma}{Lemma}{Lemmas}
\crefname{lemma}{lemma}{lemmas}

\Crefname{corollary}{Corollary}{Corollaries}
\crefname{corollary}{corollary}{corollaries}

\Crefname{definition}{Definition}{Definitions}
\crefname{definition}{definition}{definitions}

\Crefname{example}{Example}{Examples}
\crefname{example}{example}{examples}

\Crefname{remark}{Remark}{Remarks}
\crefname{remark}{remark}{remarks}

\DeclareMathOperator{\diag}{diag}

\DeclareMathOperator{\id}{Id}

\newcommand{\Crit}{\operatorname{Crit}}
\newcommand{\Leb}{\operatorname{Leb}}

\title[Uniqueness of parabolic rays]{Strong Stable Manifolds and Uniqueness of Parabolic Rays in the Newtonian \(N\)-Body Problem}

\author{Putian Yang}
\address{School of Mathematical and Physical Sciences, Chongqing University
of Science and Technology}
\email{PutianYang@outlook.com}

\author{Shiqing Zhang\textsuperscript{*}}
\address{School of Mathematics, Sichuan University}
\email{zhangshiqing@scu.edu.cn}

\thanks{\textsuperscript{*}Corresponding author.}
\subjclass[2020]{Primary 70F10; Secondary 37D10, 53C22, 70G75}
\keywords{Newtonian $N$-body problem, parabolic geodesic rays,
minimal central configurations, strong stable manifolds,
free-time minimizers, renormalized action, almost-everywhere uniqueness}

\begin{document}

\begin{abstract}
We study uniqueness of parabolic motions in the Newtonian $N$-body
problem using strong stable manifolds and direct action comparisons.
Fix a minimal normalized central configuration $c$, with no
nondegeneracy assumption. In McGehee coordinates, every parabolic
motion with this limiting configuration has a tail on the outgoing
strong stable manifold. Its local graph yields a smooth velocity field
on a truncated cone, selecting a collision-free parabolic motion that
stays in the cone and is unique among all such cone-confined motions.
For free-time minimizing rays, an explicit renormalized action excess
gives a no-exit estimate, hence uniqueness from a smaller cone without
a confinement assumption. Together with known existence results,
these cone theorems imply that every $c$-parabolic motion has a tail
that is a $c$-parabolic geodesic ray. Globally, we propagate the stable
graph backward and compare the excess on its regular branches. An
endpoint differential formula and Sard's theorem imply that the
configurations admitting more than one such minimizing ray form a
Lebesgue-null set. The uniqueness proofs combine local dynamics and
variational comparisons without using differentiability of a global
Hamilton--Jacobi solution.
\end{abstract}

\maketitle

\tableofcontents

\section{Introduction}
\label{sec:introduction}

Prescribing the asymptotic configuration of a gravitational system leads
to a boundary-value problem at infinity. In the completely parabolic
regime considered here, velocities tend to zero and the leading expansion has order
$t^{2/3}$. Such motions belong to the classical asymptotic theory of the
$N$-body problem \cite{Chazy1922,Saari1971,MarchalSaari1976}. Even when the limiting
configuration is fixed, an initial position need not determine a unique
motion: the initial velocity remains to be selected.

We study this selection problem for a fixed minimal normalized central
configuration. Our approach combines the strong stable manifold of an
outgoing McGehee equilibrium with an explicit renormalized action excess.
It gives a smooth velocity selection and uniqueness for all parabolic
motions confined to a suitable cone. Free-time minimality then yields
uniqueness from a smaller cone without an assumed confinement condition,
and uniqueness from almost every collision-free initial configuration.
The construction allows the prescribed central configuration to be
degenerate.

\subsection{Setting and main results}

Let $N\geq2$ bodies of positive masses $m_1,\ldots,m_N$ move in
$\mathbb R^d$, with $d\geq2$. We use the center-of-mass space
\[
 X=\left\{x=(x_1,\ldots,x_N):\ \sum_{i=1}^N m_i x_i=0\right\},
 \qquad m:=\dim X=d(N-1),
\]
with mass inner product $\langle x,y\rangle=\sum_i m_i x_i\cdot y_i$
and norm $\|x\|^2=\langle x,x\rangle$. Let $\Delta$ be the collision set
and $\Omega=X\setminus\Delta$. The potential, Lagrangian, and energy are
\begin{equation}
\label{eq:intro-lagrangian}
 U(x)=\sum_{i<j}\frac{m_i m_j}{|x_i-x_j|},\qquad
 L(x,\dot x)=\frac12\|\dot x\|^2+U(x),\qquad
 h(x,\dot x)=\frac12\|\dot x\|^2-U(x).
\end{equation}
Newton's equation is $\ddot x=\nabla U(x)$, with the gradient taken in
the mass metric. Set $\mathcal E=\{s\in X:\|s\|=1\}$ and
$\mathcal E_0=\mathcal E\cap\Omega$, and write $d_{\mathcal E}$ for
spherical distance. A normalized central configuration is a critical
point of $U|_{\mathcal E_0}$; its spherical gradient is
\[
 \nabla_{\mathcal E}U(s)
 =\nabla U(s)-\langle\nabla U(s),s\rangle s
 =\nabla U(s)+U(s)s.
\]
Throughout the paper, fix a \emph{minimal normalized central
configuration} (MNCC) $c$, and set
\begin{equation}
\label{eq:intro-minimal-configuration}
 U(c)=\min_{\mathcal E_0}U=:U_{min},\qquad
 v_*=\sqrt{2U_{min}},\qquad
 \alpha=\left(\frac{9U_{min}}{2}\right)^{1/3}.
\end{equation}
We fix the configuration $c$ itself, including its orientation. No
nondegeneracy or isolation assumption is imposed.

A \emph{$c$-parabolic motion} is a collision-free zero-energy Newtonian
solution $x(t)=r(t)s(t)$, defined for all $t\geq0$, with
$r(t)=\|x(t)\|\to\infty$ and $s(t)\to c$. These conditions imply
\begin{equation}
\label{eq:intro-parabolic-asymptotics}
 t^{-2/3}x(t)\longrightarrow\alpha c,
\end{equation}
as established in the McGehee correspondence below. A
\emph{$c$-parabolic geodesic ray} is such a motion whose every compact
subarc minimizes
$\mathcal A_{[a,b]}(x)=\int_a^b L(x,\dot x)\,dt$
among absolutely continuous curves with the same endpoints and arbitrary
positive travel time. Thus the minimizing condition is free-time
minimality. After reparametrization these are minimizing rays of the
zero-energy Jacobi--Maupertuis metric
$2U(x)\langle\cdot,\cdot\rangle$; dynamical statements below use physical
time.
Here \emph{minimal} refers to global minimization of $U$ on the
inertia sphere. Every such configuration is also \emph{minimizing} in
Maderna's variational sense, meaning that its parabolic homothetic
motion is a free-time minimizer \cite{Maderna2013}; the hypothesis used
below is the global potential-minimum condition
\eqref{eq:intro-minimal-configuration}.

For $R,\delta>0$, define the truncated configuration cone
\[
 \mathcal C_c(R,\delta)
 =\{x=rs\in\Omega:\ r>R,\ d_{\mathcal E}(s,c)<\delta\}.
\]

\begin{theorem}[Dynamical existence and cone-confined uniqueness]
\label{thm:uniqueness-cone-confined-parabolic-motion}
There exist $R_*,\delta_*>0$ and a $C^\infty$ map
$\mathcal V_c:\mathcal C_c(R_*,\delta_*)\to X$ such that, for every
$x_0\in\mathcal C_c(R_*,\delta_*)$, the solution with initial data
\[
 \gamma_{x_0}(0)=x_0,\qquad
 \dot\gamma_{x_0}(0)=\mathcal V_c(x_0)
\]
is a collision-free zero-energy $c$-parabolic motion and remains in
$\mathcal C_c(R_*,\delta_*)$ for all $t\geq0$. It is the unique
$c$-parabolic motion starting at $x_0$ whose entire forward
configuration remains in this cone. No minimizing assumption is required.
\end{theorem}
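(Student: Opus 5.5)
We pass to McGehee coordinates adapted to the zero-energy level. Write $x=rs$ and $y=r^{1/2}\dot x$, and decompose $y=v\,s+w$ with $v=\langle y,s\rangle$ and $w\perp s$. Then introduce the rescaled time $d\tau=r^{-3/2}dt$. In these variables the equations become the regular system
\[
 r'=vr,\qquad s'=w,\qquad v'=\tfrac12 v^2+\|w\|^2-U(s),\qquad w'=-\tfrac12 vw-\|w\|^2s+\nabla_{\mathcal E}U(s),
\]
restricted to the energy relation $\tfrac12(v^2+\|w\|^2)=U(s)$. The outgoing equilibrium $E_c=(s,v,w)=(c,v_*,0)$ lies on the collision manifold $\{r=0\}$ once we use $\rho=r^{-1/2}$, so that $\rho'=-\tfrac12 v\rho$. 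The linearization at $E_c$ has eigenvalue $-\tfrac12 v_*$ in the $\rho$ direction. In the $(s,w)$ directions the eigenvalues are $\lambda$ with $\lambda^2+\tfrac12 v_*\lambda-\mu=0$, where $\mu$ runs over the eigenvalues of the spherical Hessian $D^2_{\mathcal E}U(c)+U(c)$ (suitably normalized). The $v$ direction is slaved to the energy relation.

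Minimality of $c$ gives $\mu\ge 0$ after the correct shift. Each such quadratic then has one root $\lambda_-\le -\tfrac12 v_*$ and one root $\lambda_+\ge0$, with $\lambda_+=0$ exactly on degenerate directions. The spectrum therefore splits into a strong stable part, with $\operatorname{Re}\lambda\le-\tfrac12 v_*$ (or strictly below a gap $-\kappa<0$ chosen between the stable roots and the center-unstable ones), and a center-unstable part. The strong stable manifold theorem then yields a $C^\infty$ local invariant manifold $W^{ss}(E_c)$ tangent to the strong stable subspace. This holds with no nondegeneracy assumption, because the pseudo-hyperbolic splitting only needs a spectral gap. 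Its dimension equals $\dim\mathcal E+1$, counting the $\rho$ direction together with one stable root per spherical direction. It is a graph over the $(\rho,s)$ variables, $(v,w)=G(\rho,s)$, with $G(0,c)=(v_*,0)$ and $G$ smooth near $(0,c)$.

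We define $\mathcal V_c(rs)=r^{-1/2}\bigl(G_v(r^{-1/2},s)s+G_w(r^{-1/2},s)\bigr)$. This map is smooth on $\mathcal C_c(R_*,\delta_*)$ for $R_*$ large and $\delta_*$ small, and the energy relation guarantees zero energy.

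Existence and confinement come from the dynamics on $W^{ss}$. Since $W^{ss}$ is invariant and consists of points converging exponentially in $\tau$ to $E_c$, the estimates give $\rho(\tau)$ decreasing and $d(s(\tau),c)\le C e^{-\kappa\tau}d(s(0),c)$ up to $\rho$ terms. To make the cone genuinely forward invariant, we use a Lyapunov-type function on the graph, namely $d_{\mathcal E}(s,c)$ combined with $\rho$. We shrink the neighborhood so that the flow on $W^{ss}$ never increases it, which gives $r$ increasing and $s$ staying within $\delta_*$. If needed, we define the cone via adapted coordinates and then fit a round cone inside it. Convergence to $E_c$ then gives $r\to\infty$, $s\to c$, and no collisions, since $U(s)$ stays bounded.

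Uniqueness is the main obstacle. Suppose $\gamma$ is a $c$-parabolic motion from $x_0$ that stays in the cone. Its McGehee orbit stays in a small neighborhood of $E_c$ for all $\tau\ge0$, because $s$ is near $c$ and $\rho$ is small. We must also show that $(v,w)$ is near $(v_*,0)$, using the energy relation and the parabolic asymptotics, which force $v\to v_*$ and $w\to0$. The key step is showing that an orbit staying forever near $E_c$ and converging to it lies on $W^{ss}$, and not merely on the larger center-stable manifold. Degenerate directions give center components that could in principle allow slow convergence.

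To handle this, we first try a cone-field/graph-transform argument. In the center-unstable directions, the quantity $\lambda_+$ combined with the $\rho$ factor yields relative growth against the strong stable contraction. Any orbit with a nonzero transverse displacement from $W^{ss}$ then has that displacement growing relative to $e^{-\kappa\tau}$, while its distance to $E_c$ must go to zero. We aim to show that this forces a $\rho$-independent displacement that cannot decay, using the fact that in degenerate directions the $w$-equation carries an extra $-\tfrac12 v w$ damping while $s$ is driven only through $w$. Concretely, we will prove a Lipschitz estimate: the difference between the actual $(v,w)$ and $G(\rho,s)$ satisfies a linear inequality $\frac{d}{d\tau}\|\Delta\|\ge -(\kappa-\epsilon)\|\Delta\|$, while any orbit converging to $E_c$ inside the cone has the decay needed to force $\Delta\equiv0$. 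If this fails because of the degenerate center directions, we fall back on invariant-manifold uniqueness with the weight $e^{\kappa\tau}$: characterize $W^{ss}$ as the set of orbits with $\sup e^{\kappa'\tau}\,\mathrm{dist}<\infty$, and establish this rate for cone-confined orbits. We would do so by proving $\rho$ decays at rate $\tfrac12 v_*$ and bootstrapping the $s$-deviation through the Lyapunov function used for confinement.

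Finally, two cone-confined solutions with the same $x_0$ then both lie on the graph, so they share the initial velocity $\mathcal V_c(x_0)$ and coincide.
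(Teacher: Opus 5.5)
Your existence half is essentially the paper's argument: the strong stable manifold is a graph $w=G(\rho,s)$ over $(\rho,s)$, and a shape Lyapunov estimate on that graph gives invariance of the cone. The uniqueness half, however, has a genuine gap at exactly the step you flag.

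Both of your routes need an a priori exponential rate of convergence to $e_c^+$ for an \emph{arbitrary} cone-confined $c$-parabolic orbit, and you do not supply one. The growth inequality for $\Delta$ forces $\Delta\equiv0$ only if the orbit already converges faster than $e^{-(\kappa-\epsilon)\tau}$. The ``Lyapunov function used for confinement'' is an estimate for the reduced field $g$ on the graph, so applying it to an orbit not yet known to lie on the graph is circular. On $\ker H$ the linearization gives no rate at all. Using minimality only through $H\ge0$ cannot exclude slow convergence along degenerate directions.

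The missing idea is to use minimality globally, through the zero-energy relation. The defect $F=v_*-v\ge0$ satisfies $F'=-\frac12\|w\|^2$ by \eqref{eq:M3}, and
\[
\|w\|^2=2\bigl(U(s)-U_{min}\bigr)+(v_*+v)F\ge(v_*+v)F,
\]
so $F$ decays exponentially. Then $F(\tau)=\frac12\int_\tau^\infty\|w\|^2\,d\sigma$, together with a bound on $D_\tau w$, gives $\|w(\tau)\|^3\lesssim F(\tau)$. Hence $d_{\mathcal E}(s(\tau),c)\le\int_\tau^\infty\|w\|\,d\sigma$ also decays exponentially (\Cref{prop:outgoing zero-energy solutions locally exponentially converge}). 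Since the center--unstable spectrum is $\ge0$, such an exponential rate is what places the tail on the Lyapunov--Perron manifold. This is how the paper identifies $W_{loc}^{ss}(e_c^+;\delta)$ with all orbits converging to $e_c^+$ inside $\mathcal U_c(\delta)$.

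The second missing step is phase confinement at \emph{every} $\tau\ge0$, in particular at $\tau=0$. Injectivity of $\pi$ only helps if the initial phase point itself lies in the small neighborhood where the stable set is a graph. The energy relation gives only $\|w\|\le\sqrt{2U(s)}$, and the parabolic asymptotics give smallness of $(v_*-v,w)$ only eventually.

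The paper's \Cref{prop:configuration-phase-confinement} closes this with a displacement argument. The ambient acceleration $s''=\nabla_{\mathcal E}U(s)-\frac12vw-\|w\|^2s$ is bounded by a uniform constant $L$. So $\|w(\tau_0)\|\ge\hat\delta$ would move $s$ by at least $\hat\delta^2/(2L)$ within McGehee time $\hat\delta/L$. This contradicts $d_{\mathcal E}(s,c)<\delta$ once $\delta<\hat\delta^2/(4L)$. Energy and minimality then give $0\le v_*-v\le\|w\|^2/v_*$, with $v>0$ forced by $r\to\infty$.

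Finally, your fallback of fitting a round cone inside an adapted invariant region would not make the round cone itself invariant, which the statement requires. The paper avoids this by using $G(\rho,c)=0$: homothetic orbits lie on $W^{ss}$. Hence $g(\rho,0)=0$ and $\langle\xi,g(\rho,\xi)\rangle\le-\frac{v_*}{4}\|\xi\|^2$ uniformly in $\rho$.
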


The selected graph is positively invariant over this same cone. A
separate estimate places every cone-confined parabolic competitor on
the graph. For minimizing rays, an action comparison supplies the
confinement condition from initial data alone.

\begin{theorem}[No exit and cone uniqueness for parabolic rays]
\label{thm:uniqueness-cone-parabolic-geodesic-ray}
After increasing $R_*$ if necessary, there exists
$\varepsilon\in(0,\delta_*)$ such that, for every
\begin{equation}
\label{eq:cone-theorem-data}
 x_0=r_0s_0\in\Omega,\qquad r_0>2R_*,\qquad
 d_{\mathcal E}(s_0,c)<\varepsilon,
\end{equation}
every $c$-parabolic geodesic ray starting at $x_0$ remains in
$\mathcal C_c(R_*,\delta_*)$. Consequently, there is at most one such
ray from $x_0$, without a confinement hypothesis in its definition.
\end{theorem}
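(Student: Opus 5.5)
The plan is to prove the no-exit statement by contradiction, comparing actions. The uniqueness consequence then follows from \Cref{thm:uniqueness-cone-confined-parabolic-motion}: two rays confined to $\mathcal C_c(R_*,\delta_*)$ are cone-confined $c$-parabolic motions with the same initial point, so they coincide.

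\textbf{The excess functional.} For a $c$-parabolic motion $x$ starting at $x_0$, I will work with a renormalized action excess built on the homothetic comparison. By Maderna's minimization of the homothetic parabolic motion and the scaling of $U$, a free-time minimizer from radius $r_0$ to radius $r_1$ has action at least $2v_*\bigl(\sqrt{r_1}-\sqrt{r_0}\bigr)$ up to a correction. Indeed, along a radial path at zero energy the Jacobi--Maupertuis length element is $\sqrt{2U(x)}\,\|dx\|\ge \sqrt{2U_{min}/r}\,dr$, and $\int \sqrt{2U_{min}/r}\,dr = 2v_*\sqrt r$. More precisely, since $U(rs)=U(s)/r$, the Jacobi length of any curve from radius $r_0$ to $r_1$ satisfies
\[
\int\sqrt{2U(x)}\,\|dx\|\;\ge\;\int \sqrt{\tfrac{2U(s)}{r}}\,\sqrt{\dot r^2+r^2\|\dot s\|^2}\,dt\;\ge\;2v_*\bigl(\sqrt{r_1}-\sqrt{r_0}\bigr)+E,
\]
with excess
\[
E\;\ge\;\int\frac{U(s)-U_{min}}{\sqrt{r}}\,\cdots\;+\;\int \sqrt{r}\,\|\dot s\|^2\,\cdots .
\]
This gives two positive contributions: angular motion and potential excess above $U_{min}$. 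If the ray leaves the cone, say $d_{\mathcal E}(s(t_1),c)=\delta_*$ while $r\ge R_*$, or $r$ drops to $R_*$, then the excess accumulated up to the time it returns to within $\varepsilon$ of $c$ (which it must, since $s\to c$) is bounded below by a constant times $\sqrt{r}\,(\delta_*-\varepsilon)^2$ over the relevant radial scale. Here I use a spherical-distance/length bound of the form $\int \sqrt{r}\,\|\dot s\|\,dr$-type Cauchy--Schwarz estimates on dyadic radial shells.

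\textbf{Competitor.} The competitor is the cone-confined parabolic motion $\gamma_{x_0}$ from \Cref{thm:uniqueness-cone-confined-parabolic-motion}. Because it lies in the invariant graph over a small cone, its excess relative to $2v_*\sqrt r$ from $r_0$ to any large radius is small, of order $C\sqrt{r_0}\,\varepsilon^2$ plus $o(1)$. Both curves go to infinity asymptotic to $\alpha c\,t^{2/3}$. I will compare the ray $x$ with the curve that follows $\gamma_{x_0}$ up to a large radius $\rho$ and then connects to $x(T)$, where $\|x(T)\|=\rho$. The connecting arc runs within an angle $o(1)$ (both directions are near $c$), so by the same estimate it has Jacobi length $o(\sqrt\rho)$ at worst; a sharper estimate, via geodesic bridging on the sphere at radius $\rho$, gives cost $O(\sqrt\rho\,\theta_\rho)$, where $\theta_\rho$ is the spherical distance at radius $\rho$.

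\textbf{Conclusion and main obstacle.} Free-time minimality then forces
\[
E_x\le E_{\gamma}+\mathrm{bridge}\le C\sqrt{r_0}\,\varepsilon^2+O\bigl(\sqrt\rho\,\theta_\rho\bigr).
\]
The main obstacle is precisely this bridge: $\sqrt\rho\,\theta_\rho$ need not vanish unless the angular convergence is faster than $\rho^{-1/2}$. The first thing I will try is to use the strong stable manifold rate. The tail of $x$ lies on the outgoing strong stable manifold (as the paper states for every $c$-parabolic motion), so $d(s,c)=O(r^{-\beta})$ with $\beta>1/2$ coming from the spectral gap at the McGehee equilibrium; I expect the minimality of $c$ to give the needed gap. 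Failing this, I will bridge instead along a whole dyadic shell, where the angular defect cost scales like $\sqrt\rho\,\theta_\rho^2$, which tends to zero.

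Finally I choose the constants. Exit makes $E_x\gtrsim\sqrt{R_*}\,\delta_*^2$ on the shells where the excursion happens, with $r_0>2R_*$ guaranteeing radial room; the case where $r$ falls below $R_*$ is handled by the radial monotonicity of minimizers, via a Lagrange--Jacobi argument showing $\ddot{(r^2)}>0$ at zero energy. Choosing $\varepsilon$ small relative to $\delta_*$ then contradicts the upper bound.
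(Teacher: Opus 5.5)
Your reduction of the uniqueness clause to \Cref{thm:uniqueness-cone-confined-parabolic-motion} is correct. Your fallback bridge, a dyadic-shell adjustment costing $O(\sqrt\rho\,\theta_\rho^2)\to0$, is exactly the paper's shell adjustment. Drop the first bridging idea. Minimality of $c$ gives $\lambda^-\le -v_*/2$ with \emph{equality} on $\ker H$, and $\ker H$ is never trivial because it contains the rotation directions of $c$. In those directions $\xi$ and $\rho$ both decay like $e^{-v_*\tau/2}$, so $\sqrt r\,d_{\mathcal E}(s,c)$ is asymptotically constant at linear order, not small.

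\textbf{The gap: the no-exit step.} There are two problems.

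First, Lagrange--Jacobi gives only $\frac{d^2}{dt^2}r^2=2U>0$, that is, convexity of $r^2$. Convexity allows an initial inward phase with $\dot r(0)<0$, in which $r$ drops below $R_*$ before increasing. It therefore yields no radial monotonicity unless you already know $\dot r(0)\ge0$.

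Second, your exit lower bound $\gtrsim\sqrt{R_*}\,\delta_*^2$ is not scale-invariant, while the upper bound $C\sqrt{r_0}\,\varepsilon^2$ is. Since $\varepsilon$ may not depend on $r_0$, no choice of $\varepsilon$ gives a contradiction once $r_0\gg R_*$. You would need a lower bound of order $\sqrt{r_0}\,\delta_*^2$. Your Cauchy--Schwarz bound $(\Delta\theta)^2/(2\int r^{-2}\,dt)$ gives that only with a priori control of the radial motion, namely $r\gtrsim r_0$ and $\int r^{-2}\,dt\lesssim r_0^{-1/2}$. That control is exactly what is missing.

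\textbf{The missing idea.} What closes both holes, and makes a contradiction argument unnecessary, is the exact endpoint identity along zero-energy solutions,
\[
\mathcal R_{[\tau_1,\tau_2]}=[2\sqrt r\,(v-v_*)]_{\tau_1}^{\tau_2}.
\]
Together with $2\sqrt r\,(v_*-v)\to0$ on a $c$-parabolic motion, this gives $\mathcal R_{[0,\infty)}=2\sqrt{r_0}\,(v_*-v(0))$.

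Combine this with the shell-comparison upper bound $C_{\mathrm{sh}}\theta_0^2\sqrt{r_0}$ on the ray's excess. You get $v_*-v(0)\le\frac12C_{\mathrm{sh}}\theta_0^2$. Since $v'\ge0$, it follows that $v\ge v_*/2$ for all $\tau$ when $\theta_0$ is small, so $r$ is increasing.

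For the shape: while $v\ge0$, the zero-energy relation gives $\|w\|^2\ge v_*(v_*-v)$. Hence $-\frac{d}{d\tau}\sqrt{v_*-v}\ge\frac{\sqrt{v_*}}{4}\|w\|$. Integrating, the entire remaining shape length is at most $\frac{4}{\sqrt{v_*}}\sqrt{v_*-v(0)}\le K\theta_0$.

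This is how the paper obtains confinement in $\mathcal C_c(r_0/2,\bar\delta)\subset\mathcal C_c(R_*,\delta_*)$ directly, with $\varepsilon$ independent of $r_0$.
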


\medskip
\noindent\emph{Eventual free-time minimality.}
Together with the known existence of $c$-parabolic geodesic rays from
every collision-free configuration
\cite{PercinoSanchezMorgado2014,PolimeniTerracini2024},
\Cref{thm:uniqueness-cone-confined-parabolic-motion,thm:uniqueness-cone-parabolic-geodesic-ray}
imply that every $c$-parabolic motion is eventually free-time minimizing.
More precisely, for every $c$-parabolic motion $x:[0,\infty)\to\Omega$,
there exists $T\geq0$ such that
\[
 x_T(t):=x(T+t),\qquad t\geq0,
\]
is a $c$-parabolic geodesic ray.
Indeed, choose $T$ so large that
$x([T,\infty))\subset\mathcal C_c(2R_*,\varepsilon)$, and let $\eta$
be a $c$-parabolic geodesic ray starting at $x(T)$.
\Cref{thm:uniqueness-cone-parabolic-geodesic-ray} keeps $\eta$ in
$\mathcal C_c(R_*,\delta_*)$, while $x_T$ already stays in this cone.
\Cref{thm:uniqueness-cone-confined-parabolic-motion} therefore gives
$x_T=\eta$. No minimality of the initial portion $x|_{[0,T]}$ is asserted.

For the planar three-body problem, eventual free-time minimality of
parabolic motions asymptotic to a Lagrange configuration was proved by
Moeckel--Montgomery--S\'anchez Morgado
\cite[Theorem~2]{MoeckelMontgomerySanchezMorgado2018}.
The argument above gives the corresponding conclusion for general $N$
and $d\geq2$, with a prescribed MNCC and without a nondegeneracy
assumption.

\begin{theorem}[Global almost-everywhere uniqueness]
\label{thm:global-generic-uniqueness}
Let
\begin{equation}
\label{eq:multiple-ray-set}
 \mathfrak M_c
 =\{x\in\Omega:\ \text{at least two distinct $c$-parabolic geodesic
 rays start at $x$}\}.
\end{equation}
Then $\Leb(\mathfrak M_c)=0$. Hence, for almost every $x\in\Omega$,
there is at most one $c$-parabolic geodesic ray starting at $x$.
\end{theorem}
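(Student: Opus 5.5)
We would prove \Cref{thm:global-generic-uniqueness} by propagating the local stable graph of \Cref{thm:uniqueness-cone-confined-parabolic-motion} backward in time and realizing every $c$-parabolic geodesic ray as one branch of the resulting immersed Lagrangian-type family. The goal is to reduce non-uniqueness to a critical-value phenomenon covered by Sard's theorem.

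First we parametrize rays. Let $W=\{(x,\mathcal V_c(x)) : x\in\mathcal C_c(2R_*,\varepsilon)\}$ be the local graph. By \Cref{thm:uniqueness-cone-parabolic-geodesic-ray} and the eventual-minimality argument above, every $c$-parabolic geodesic ray $\eta$ from $x\in\Omega$ eventually enters $\mathcal C_c(2R_*,\varepsilon)$ and from then on coincides with the orbit of the graph. Hence $(\eta(0),\dot\eta(0))=\Phi_{-T}(y,\mathcal V_c(y))$ for some $y$ in the cone and some $T\ge0$, where $\Phi$ is the Newtonian flow. We then take the smooth manifold
\[
\mathcal W=\{(T,y)\}, \qquad F(T,y)=\pi\circ\Phi_{-T}(y,\mathcal V_c(y)).
\]
This is restricted to the open set where the backward flow is collision-free, and taken modulo the time-shift redundancy. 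In practice we fix a transversal slice, e.g.\ $y$ in a hypersurface $\{\|y\|=\rho\}$ of the cone, so that $\dim\mathcal W=m$. The map $F$ is smooth, $\dim\mathcal W=\dim\Omega=m$, and every ray start point lies in the image. Countably many slices $\rho_k\to\infty$ are used to cover all entry times.

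Next we use the excess to separate branches. On $\mathcal W$ we define the renormalized action
\[
\mathcal S(T,y)=\lim_{\tau\to\infty}\Bigl(\mathcal A_{[0,T+\tau]}(\gamma)-\text{(homothetic reference action)}\Bigr),
\]
the explicit renormalized excess mentioned in the abstract, evaluated along the backward-propagated orbit. The endpoint differential formula to establish is
\[
d\mathcal S=\langle \dot\gamma(0),dF\rangle
\]
as one-forms on $\mathcal W$. This is the standard first-variation computation. The boundary term at infinity vanishes because the graph is a strong stable manifold: variations decay like $t^{-\beta}$ against velocities $O(t^{-1/3})$, making the cross term $o(1)$. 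Free-time minimality of rays says that a ray from $x$ realizes the minimum of $\mathcal S$ over $F^{-1}(x)$, i.e.\ the Busemann-type value $u(x)=\min_{F^{-1}(x)}\mathcal S$.

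Finally we apply Sard. Let $x\in\mathfrak M_c$ be a regular value of $F$. Then near each preimage $p_i$ carrying a ray, $F$ is a local diffeomorphism, giving smooth local functions $u_i=\mathcal S\circ F_i^{-1}$ with $du_i(x)=\dot\eta_i(0)$. Two distinct rays have distinct initial velocities, since they solve the same ODE from the same point. So $du_1(x)\neq du_2(x)$ while $u_1(x)=u_2(x)=u(x)$ and $u\le u_i$ nearby. The set $\{u_1=u_2\}$ with $d(u_1-u_2)\ne0$ is locally a $C^\infty$ hypersurface, hence null. Since the number of branches is locally finite at regular values, by properness of $F$ on bounded-energy sublevels, the regular part of $\mathfrak M_c$ is covered by countably many such hypersurfaces. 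The critical values of $F$ form a null set by Sard. Combining these gives $\Leb(\mathfrak M_c)=0$.

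The main obstacle is rigor in the first two steps. We must show $\mathcal S$ is finite and smooth on $\mathcal W$ and that the boundary term at infinity vanishes uniformly, which needs the quantitative decay on the strong stable manifold in McGehee variables. We must also show that ray data really lie on $\mathcal W$ with the branch sets locally finite. We would try to obtain the latter from compactness of minimizers with bounded excess, via the no-exit estimate of \Cref{thm:uniqueness-cone-parabolic-geodesic-ray}.
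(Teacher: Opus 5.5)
Your architecture matches the paper's. It propagates the local strong stable sheet backward (the paper uses $W_n=\Phi^{-n}(W_0\cap\mathcal D_n^-)$ in McGehee phase space instead of your $(T,y)$-slices, an inessential difference), applies Sard to the configuration projection, and attaches to each regular branch an excess whose differential encodes the initial velocity. It then shows two rays have equal excess and that ties lie on hypersurfaces. The genuine gap is in the step you yourself flag as the main obstacle: the mechanism you give for the vanishing boundary term is false. Take the variation at fixed McGehee time, which is legitimate because at zero energy the variation of the endpoint time drops out of the first-variation formula. Then $\delta x=\delta r\,s+r\,\eta$ with $\eta=\delta s$, and neither part decays. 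The radial part $\delta r=-2r\,\delta\rho/\rho$ is of order $r$ in general. Along $\ker H_c$ (never trivial, because of rotations) the stable rate is exactly $\lambda^-=-v_*/2$, so $\eta\sim e^{-v_*\tau/2}$ and $r\,\eta$ grows like $r^{1/2}\sim t^{1/3}$. Hence $\|\dot x\|\,\|\delta x\|$, with $\|\dot x\|\sim t^{-1/3}$, is not $o(1)$.

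What actually holds is a cancellation plus a sharp rate comparison. The renormalized boundary term at McGehee time $\tau$ is $\langle\dot x,\delta x\rangle-v_*r^{-1/2}\delta r=2(\delta\rho/\rho)\,\rho^{-1}(v_*-v)+\rho^{-1}\langle w,\eta\rangle$. The radial piece vanishes only because of the renormalization: $\delta\rho/\rho$ converges, while $\rho^{-1}(v_*-v)=\frac12\mathcal R_{[\tau,\infty)}\to0$. The tangential piece is $O(e^{(v_*/2-2\beta)\tau})$, since $\rho^{-1}\le Ce^{v_*\tau/2}$ and $\|w\|,\|\eta\|=O(e^{-\beta\tau})$. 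It tends to $0$ exactly because the strong stable rates allow $\beta>v_*/4$. The paper avoids first variation altogether. The identity $\mathcal R_{[\tau_1,\tau_2]}=[2\sqrt r\,(v-v_*)]_{\tau_1}^{\tau_2}$ makes the branch excess the explicit smooth function $\mathcal R_i=2\rho^{-1}(v_*-v_i)$, which also settles your finiteness and smoothness concerns. The formula $dv_i=\frac12\langle w_i,ds\rangle$ then follows from constancy of $Q/\rho$, where $Q=2\,\delta v-\langle w,\eta\rangle=O(e^{-2\beta\tau})$. (With the excess normalization the correct formula is $d\mathcal S=-\langle\dot\gamma(0),dx\rangle+v_*r^{-1/2}dr$; the sign and the branch-independent term are harmless for the tie argument.)

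There are two secondary points. First, equality of the two rays' excesses does not follow directly from free-time minimality, which only compares curves with common finite endpoints. You must join far points of one ray to far points of the other by curves whose excess tends to zero. This needs the tail bound $d_{\mathcal E}(s,c)^2\sqrt r\le(8/v_*)\mathcal R_{[\tau,\infty)}\to0$ together with shell adjustments of cost $O(\theta^2\sqrt r)$ (\Cref{lem:shell-adjustment,prop:global-excess-selection}); convergence $s\to c$ alone would not make the gluing cost vanish. Second, local finiteness of branches at regular values is unsupported: every orbit has zero energy, so ``bounded-energy sublevels'' yield no properness. It is also unnecessary. Cover the regular points of each second-countable sheet by countably many domains of local inverses and take all pairs, as in \Cref{prop:regular-sheet-decomposition,prop:tie-sets-null}.
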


Existence of a free-time minimizing $c$-parabolic ray from each prescribed
collision-free configuration follows from
\cite{PercinoSanchezMorgado2014,PolimeniTerracini2024}.
Together with these existence results, the last two theorems give
exactly one ray in the smaller cone and from almost every
collision-free configuration, respectively. In the smaller cone the ray
is the motion selected by $\mathcal V_c$. The masses and $c$ remain
fixed throughout; \emph{generic uniqueness} refers to this
measure-theoretic statement for initial configurations. The two
conclusions without a confinement hypothesis concern minimizing rays.

\subsection{Relation to earlier work}

The variational theory rests in part on Marchal's theorem excluding
interior collisions of fixed-endpoint action minimizers in dimension
$d\geq2$ \cite{Marchal2002}.
Maderna--Venturelli \cite{MadernaVenturelli2009} constructed parabolic
motions from an arbitrary initial configuration to a prescribed minimal
central configuration, minimizing the action on every compact interval
with its duration fixed, without a nondegeneracy assumption.
Maderna's weak KAM theory \cite{Maderna2012} provides a framework for
free-time minimization. Da Luz--Maderna \cite{daLuzMaderna2014} proved
that free-time minimizers on a half-line are completely parabolic, and
Percino--S\'anchez-Morgado \cite{PercinoSanchezMorgado2014} constructed
Busemann functions whose calibrated rays start from every configuration
and have the prescribed minimal configuration at infinity.
More generally, Burgos--Maderna \cite{BurgosMaderna2022} proved that
Jacobi--Maupertuis geodesic rays at nonnegative energy are expansive and
described their asymptotic cluster decomposition.

The dynamical background includes McGehee's blow-up
\cite{McGehee1974}, the analysis of total-collapse and completely
parabolic manifolds by Saari--Hulkower \cite{SaariHulkower1981}, and
Chenciner's treatment of the relation between these two regimes through
homogeneity reduction \cite{Chenciner1998}.
A particularly close precedent is
Moeckel--Montgomery--S\'anchez Morgado
\cite{MoeckelMontgomerySanchezMorgado2018}. For the planar three-body
problem, they represent the stable manifold of a Lagrange rest point at
infinity as a smooth Lagrangian graph over a truncated configuration
cone and establish the eventual free-time minimality recalled above.
Our analysis develops this stable-graph perspective for general $N$,
$d\geq2$, and a prescribed MNCC, allowing degeneracy beyond rotational
directions.

A related connection between invariant manifolds and minimality was
established by Barutello--Canneori--Terracini
\cite{BarutelloCanneoriTerracini2021} for finite-time collisions.
They identify the local stable manifold near a minimal central
configuration with the initial data of minimizing collision arcs.
Their hypotheses include nondegeneracy, formulated modulo rotations in
the $N$-body setting. This is an important counterpart of the
stable-graph approach, although the endpoint is a collision rather than
parabolic infinity.

For motions at infinity, Boscaggin--Dambrosio--Feltrin--Terracini
\cite{BoscagginDambrosioFeltrinTerracini2021} proved existence from
sufficiently distant initial configurations in a cone by a
functional-analytic argument. Their spectral condition holds at every
MNCC, including degenerate ones. Thus cone-based existence without
nondegeneracy is already available; our emphasis is on capture of all
$c$-parabolic tails and on the associated uniqueness and no-exit
properties.
Polimeni--Terracini \cite{PolimeniTerracini2024} introduced a direct
renormalized-action construction of minimal expansive solutions with
prescribed initial and asymptotic data, including the parabolic rays
used here. Recent preprints extend this variational framework to
homogeneous interactions and refine the asymptotic analysis
\cite{BertiPolimeniTerracini2026}, and construct families with prescribed
subleading asymptotic modes \cite{Wu2026}. Prescribing these additional
modes at infinity is different from fixing an initial position and the
leading limiting configuration, as in the present problem.

At positive energy, Maderna--Venturelli
\cite{MadernaVenturelli2020} developed a viscosity-solution approach to
hyperbolic motions with prescribed initial and asymptotic data.
Their subsequent work \cite{MadernaVenturelli2026} proves a cone theorem,
uniqueness of hyperbolic Busemann functions, eventual minimality of
hyperbolic motions, and almost-everywhere uniqueness of minimizing rays
with prescribed noncollision asymptotic velocity. These results provide
a close positive-energy counterpart of the questions considered here.

For the parabolic regime itself, Berti--Polimeni--Terracini
\cite{BertiPolimeniTerraciniRegularity2026} study Hamilton--Jacobi
solutions associated with renormalized value functions, including those
for a fixed MNCC without a nondegeneracy assumption. They obtain
countable codimension-one rectifiability of the singular set and finer
bounds for regular conjugate points. Their singularities encode
nonuniqueness of minimizers of the corresponding renormalized
functional, yielding almost-everywhere uniqueness and more detailed
exceptional-set information in that framework.

Against this background, the point of the present approach is the
combination of strong stable capture for every $c$-parabolic tail,
including motions not assumed minimizing, with explicit action
comparisons on finite-dimensional stable sheets. An invariant part of
the local graph gives dynamical existence and cone-confined uniqueness.
For minimizing rays, the renormalized excess prevents exit and separates
regular branches of backward stable sheets. This yields a proof of
almost-everywhere uniqueness without appealing to differentiability or
semiconcavity of a global Hamilton--Jacobi solution. The regularity analysis in
\cite{BertiPolimeniTerraciniRegularity2026} also uses Sard-type
estimates; here Sard's theorem is applied directly to the configuration
projections of countably many strong stable sheets. Our global
conclusion is the Lebesgue nullity of $\mathfrak M_c$.

\subsection{Outline of the argument}

Write $x=rs$ and introduce McGehee coordinates and time by
\begin{equation}
\label{eq:intro-mcgehee-variables}
 \rho=r^{-1/2},\qquad v=r^{1/2}\dot r,\qquad
 w=r^{3/2}\dot s\in T_s\mathcal E,\qquad
 \frac{dt}{d\tau}=r^{3/2}.
\end{equation}
The extended zero-energy phase space is
\begin{equation}
\label{eq:phase space of McGehee coordinates}
 \mathcal P=\left\{(\rho,s,v,w):\
 \rho\in\mathbb R,\ s\in\mathcal E_0,\ w\in T_s\mathcal E_0,\
 \frac12(v^2+\|w\|^2)=U(s)\right\}.
\end{equation}
It has dimension $2m-1$. Allowing signed $\rho$ extends the flow across
infinity; the physical region has $\rho>0$, and we write
$\mathcal P_0=\mathcal P\cap\{\rho\geq0\}$.
The $c$-parabolic motions correspond to physical orbits converging to
$e_c^+=(0,c,v_*,0)$.

\emph{Strong stable capture and the invariant cone.}
On $\mathcal P$, the radial equation is $v'=\frac12\|w\|^2$, where
prime denotes McGehee time. Along an orbit converging to $e_c^+$, the
defect $F=v_*-v\geq0$ satisfies
\begin{equation}
\label{eq:intro-capture}
 \|w\|^2=2(U(s)-U_{min})+(v_*+v)F,\qquad
 F'\leq-\frac12(v_*+v)F.
\end{equation}
Minimality of $c$ therefore gives exponential decay and, with the shape
equation, captures the full tail on the strong stable manifold.
Let $H=H_c:=\operatorname{Hess}_{\mathcal E}U(c)\geq0$.
For an eigenvalue $\mu$ of $H$, the stable shape eigenvalue is
$\lambda^-(\mu)=-v_*/4-\sqrt{v_*^2/16+\mu}\leq-v_*/2$.
Together with the radial eigenvalue $-v_*/2$, these eigenvalues give an
$m$-dimensional strong stable space whose projection onto $(\rho,s)$
is an isomorphism, even when $H$ has a kernel.

A Lyapunov--Perron construction and the inverse function theorem
give a smooth local graph $w=G(\rho,s)$. The construction lies in the
standard invariant-manifold framework \cite{HirschPughShub1977}.
With $V(s,w)=\sqrt{2U(s)-\|w\|^2}$, the associated physical velocity is
\begin{equation}
\label{eq:smooth-initial-velocity-map}
 \mathcal V_c(rs)=r^{-1/2}
 \left[V\bigl(s,G(r^{-1/2},s)\bigr)s+G(r^{-1/2},s)\right].
\end{equation}
The homothetic half-line gives $G(\rho,c)=0$. In normal coordinates
$s=\exp_c\xi$, put
$g(\rho,\xi)=[(d\exp_c)_\xi]^{-1}G(\rho,\exp_c\xi)$.
The stable spectrum implies, on a sufficiently small rectangle,
\begin{equation}
\label{eq:intro-positive-invariance}
 \langle\xi,g(\rho,\xi)\rangle\leq-\kappa\|\xi\|^2,\qquad
 \rho'\leq-\frac{v_*}{4}\rho
 \quad(\rho\geq0)
\end{equation}
for some $\kappa>0$. Hence this part of the graph is positively
invariant. A separate configuration-to-phase confinement estimate puts
every cone-confined $c$-parabolic competitor on the same graph.
These two steps prove
\Cref{thm:uniqueness-cone-confined-parabolic-motion}.

\emph{Action excess and no exit.}
Subtracting the homothetic radial primitive $B(r)=2v_*\sqrt r$ from the
action gives the nonnegative excess
\begin{equation}
\label{eq:intro-excess}
\begin{split}
 \mathcal R_{[a,b]}(x)
 &:=\mathcal A_{[a,b]}(x)-B(r(b))+B(r(a))\\
 &=\int_a^b\left[
 \frac12\left(\dot r-\frac{v_*}{\sqrt r}\right)^2
 +\frac12r^2\|\dot s\|^2+\frac{U(s)-U_{min}}r\right]dt.
\end{split}
\end{equation}
Along a zero-energy solution this is the endpoint difference
$[2\sqrt r\,(v-v_*)]_a^b$.
For a $c$-parabolic motion, $E=2\sqrt r\,(v_*-v)$ satisfies
$E'=-v_*E/2-2\sqrt r\,(U(s)-U_{min})\leq-v_*E/2$.
Consequently the total tail excess equals $E$ and tends to zero.
On a sufficiently late tail, where $v\geq0$, a remaining-length estimate gives
$d_{\mathcal E}(s,c)\leq4\sqrt{F/v_*}$, hence
$d_{\mathcal E}(s,c)^2\sqrt r\leq8E/v_*\to0$.
A shell adjustment through an angle $\theta$ at radius $r$ has
excess $O(\theta^2\sqrt r)$. Comparing a minimizing ray with
homothetic segments and such adjustments bounds its total excess by
$C\theta_0^2\sqrt{r_0}$, where
$\theta_0=d_{\mathcal E}(s_0,c)$.
Thus $F(0)\leq C\theta_0^2$. For small $\theta_0$, the radial velocity
stays positive, the radius increases, and a bound on the remaining
shape length keeps the ray in the larger cone. This proves the no-exit
statement in \Cref{thm:uniqueness-cone-parabolic-geodesic-ray}.

\emph{Backward sheets and global uniqueness.}
Backward propagation of the physical local strong stable manifold by
integer McGehee times covers all initial phase points under consideration
by countably many smooth $m$-dimensional sheets.
Sard's theorem makes the critical values of their configuration
projections negligible. On a regular branch
$Z_i(\rho,s)=(\rho,s,v_i,w_i)$, the excess and its differential are
\begin{equation}
\label{eq:intro-branch-excess}
 R_i=2\rho^{-1}(v_*-v_i),\qquad
 dR_i=2\rho^{-2}(v_i-v_*)\,d\rho
       -\rho^{-1}\langle w_i,ds\rangle.
\end{equation}
The formula follows directly from the linearized equations and decay
of tangent variations. Distinct phase points over the same
configuration have different branch differentials.
Free-time minimality and gluing at infinity with vanishing excess
show that two minimizing rays from the same configuration have equal
excess. Each equality between distinct regular branches therefore
lies locally on a smooth hypersurface. Countability, together with
Sard's theorem for the remaining critical values, proves
\Cref{thm:global-generic-uniqueness}.

\subsection{Organization}

Section~2 derives the McGehee equations and their correspondence with
parabolic motions. \Cref{secn:Linearization at infinity} studies the
linearization, and
\Cref{secn:strong stable manifold and its tangent space at ec+}
constructs the strong stable manifold.
\Cref{secn:cone-uniqueness} proves the graph representation, positive
invariance, and configuration-to-phase confinement, then develops
the excess estimates and proves the two cone theorems.
\Cref{sec:global-generic-uniqueness} establishes global
almost-everywhere uniqueness. The appendices contain the technical
proofs used in these arguments.

\section{McGehee flows}
In this section, we present the McGehee system together with its derivation.
The McGehee system studied in this paper is
\begin{empheq}[left=\empheqlbrace]{align}
        \rho' &= -\frac{1}{2}\rho v, \tag{M1}\label{eq:M1}\\
         s' &= w,  \tag{M2}\label{eq:M2}\\
         v' &= \frac12\|w\|^2,\tag{M3}\label{eq:M3}\\
         D_\tau w &= \nabla_\mathcal{E} U(s) - \frac{1}{2}vw. \tag{M4}\label{eq:M4}
\end{empheq}
We now show how it is derived. 
The derivative of the coordinates are written by 
$
\dot{ }
$ and $'$ as take the derivative of $t$ and $\tau$ respectively.
First, we compute the the derivative of $\rho$:
\begin{equation*}
    \rho' = r^{3/2} \dot{\rho} = r^{3/2} (-\dfrac{1}{2}r^{-3/2} \dot{r}) = -\frac{1}{2}\dot{r}=-\frac{1}{2}\rho v.
\end{equation*}

Second, using $w = r^{3/2}\dot{s}$ to obtain the equation for shape $s$:
\begin{equation*}
    s' = \frac{dt}{d\tau}\dot{s} =w.
\end{equation*}

Third, we derive the equation for  $v$.
 For any $s\in\mathcal{E}$ and $u\in T_s\mathcal{E}$, let $\gamma$ be a 
 curve  on $\mathcal{E}$ with $\gamma(0) = s$ and $\dot{\gamma}(0) = u$.
 Differentiating $\langle\gamma(\epsilon), \gamma(\epsilon)\rangle = 1$ at $\epsilon = 0$, we obtain $\langle u, s\rangle = 0$, thus $s$ is the normal direction
 of $\mathcal{E}$ in the ambient space $X$.
 On the otherhand, since $\langle s , \dot { s } \rangle = 0$ and then $\langle \ddot{s},s\rangle + \langle \dot{s},\dot{s}\rangle = 0$ for $s\in\mathcal{E}$. Thus 
 $\langle \ddot{s},s\rangle = -\|\dot{s}\|^2$ means $\ddot{s}$ has the component $ -\|\dot{s}\|^2$ on the normal $s$, hence the covariant derivative
 of $\dot{s} $ on $\mathcal{E}$ is 
 \begin{equation}\label{eq:tangential projection of w'}
        D_t\dot{s} = \ddot{s} + \|\dot{s}\|^2s.
 \end{equation}
 Now since
$$x = rs,$$ we compute
$$
\dot{x} = \dot{r}s + r\dot{s},$$  leading to the polar acceleration decomposition on $\mathcal{E}$
\begin{equation}\label{eq:polar acceleration decomposition}
        \begin{aligned}
            \ddot{x} &= \ddot{r}s + 2\dot{r}\dot{s} + r\ddot{s}.\\
        &=(\ddot{r}-r\|\dot{s}\|^2)s+(2\dot{r}\dot{s}+rD_t\dot{s}).
        \end{aligned}
\end{equation}
Therefore, the Newton equation $\ddot{x} = \nabla U(x) = \nabla U(rs)$ become  
\begin{equation}\label{eq:decomose Newton eqn}
        (\ddot{r}-r\|\dot{s}\|^2)s+(2\dot{r}\dot{s}+rD_t\dot{s}) = \nabla U(rs) = r^{-2} \nabla U(s).     
\end{equation}
Take inner product with $s$  and us Euler's formula to get
$$\ddot{r} - r \|\dot{s}\|^2 = r^{-2} \langle \nabla U(s), s \rangle = -r^{-2} U(s),$$
hence 
\begin{equation}\label{eq:eq2 for proving equiv of McGehee and Newton}
        r^2\ddot{r} = r^3\|\dot{s}\|^2 - U(s).
\end{equation}
On the otherhand,  we have 
\begin{equation*}
    \begin{aligned}
        v^{\prime} &=r^{3/2}\frac{d}{dt}(r^{1/2}\dot{r})\\
        &=r^{3/2}\left(\frac{1}{2}r^{-1/2}\dot{r}^2+r^{1/2}\ddot{r}\right)\\
        &=\frac{1}{2}r\dot{r}^2+r^2\ddot{r}.
    \end{aligned}
\end{equation*}
and
\begin{equation*}
        w=r^{3/2}\dot{s}\quad\Rightarrow\quad\|w\|^2=r^3\|\dot{s}\|^2.
\end{equation*}
Combining these equations with $v=r^{1/2}\dot{r}$ \eqref{eq:eq2 for proving equiv of McGehee and Newton} to have equation for radial coordinate:
\begin{equation*}
    v^{\prime}=\frac{1}{2}v^2+\|w\|^2-U(s).
\end{equation*}

Fourth, the equation for $w$ is formulated as follows.
Select the tangent component of \eqref{eq:decomose Newton eqn}:
$$2\dot{r}\dot{s} + rD_t\dot{s} = r^{-2}\nabla_{\mathcal{E}}U(s).$$
Hence 
\begin{equation}\label{tgt eqn of nabla U}
    2r^2\dot{r}\dot{s} + r^3D_t\dot{s} = \nabla_{\mathcal{E}}U(s).
\end{equation}
Taking the covariant derivative of $w = r^{3/2}\dot{s}$ with respect to $\tau$ on $\mathcal{E}$, which yields
\begin{equation}\label{eq:eq3 for proving equiv of McGehee and Newton}
    \begin{aligned}
        D_{\tau}w &= r^{3/2}D_t(r^{3/2}\dot{s})\\
             &= r^{3/2}\left(\frac{3}{2}r^{1/2}\dot{r}\dot{s} + r^{3/2}D_t\dot{s}\right)\\
             &= \frac{3}{2}r^2\dot{r}\dot{s} + r^3D_t\dot{s}.
    \end{aligned}
\end{equation}
Using \eqref{tgt eqn of nabla U}, we have
\begin{equation}\label{eq:eq4 for proving equiv of McGehee and Newton}
    \begin{aligned}
            D_{\tau}w &=\frac{3}{2}r^2\dot{r}\dot{s}+\nabla_\mathcal{E}U(s)-2r^2\dot{r}\dot{s}.\\
  &=\nabla_{\mathcal{E}}U(s)-\frac{1}{2}r^{2}\dot{r}\dot{s}.
    \end{aligned}
\end{equation}
Applying $r^{2}\dot{r}\dot{s}=(r^{1/2}\dot{r})(r^{3/2}\dot{s})=vw$, we obtain the equation
for shape coordinate $w$:
\begin{equation*}      
    D_\tau w=\nabla_\mathcal{E}U(s)-\frac{1}{2}v w
\end{equation*}

Last but not least, we need to find the energy relation of these coordinate.  At first, $\langle s , \dot {s} \rangle = 0$ and 
 $\dot {x} = \dot {r}s + r \dot{s}$ to have 
 \begin{equation}\label{eq:decomose |dot x|2}
        \| \dot { x } \| ^ { 2 } = \dot { r } ^ { 2 } + r ^ { 2 } \| \dot { s } \| ^ { 2 } ,
 \end{equation}
since 
$$
\dot { r } = r ^ { - 1 / 2 } v , \quad \dot { s } = r ^ { - 3 / 2 } w ,$$
so we have
$$\dot { x } = r ^ { - 1 / 2 } ( v s + w ) .$$
Because $s$ is  normal and $w\in T_s\mathcal{E}$,
we have 
$$
\| \dot{x} \|^2 = r^{-1}(v^2 + \| w \|^2).
$$
On the otherhand, the energy of parabolic motions reads:
$$\frac{1}{2} \| \dot{x} \|^2 - U(x)=0,$$
by 
$U(x) = r^{-1}U(s)$
we have 
$$
\frac{1}{2r}(v^2 + \| w \|^2) - \frac{1}{r}U(s) = 0,
$$
and finally the zero energy constraint of McGehee coordinates for parabolic motions
\begin{equation}\label{energy relation of McGehee coordinates}
    \frac{1}{2}(v^2 + \| w \|^2) = U(s),
\end{equation}
In particular, $v$ can be determined by $s$ and $w$ as 
\begin{equation*}
        v = \pm\sqrt{2U(s) - \|w\|^2},
\end{equation*}
We denote the radial velocity funciton
\begin{equation*}
        V(s,w) := \sqrt{2U(s) - \|w\|^2}.
\end{equation*}
Recall the  equation \eqref{eq:M3} of radial velocity $v$ and 
by the zero-energy constraint
\eqref{energy relation of McGehee coordinates}, therefore
\begin{equation}
v'=\frac12v^2+\|w\|^2-\frac12\left(v^2+\|w\|^2\right)
=
\frac12\|w\|^2\geq 0.
\end{equation}
We will find later that  
$v$ is determined once $s$ and $w$ are fixed, hence we sometimes write the reduced equilibrium 
$e_c^+ = ( 0, c, 0 )$ where the first and second $0$ represent the $\rho$ and $w$ respectively and $v$ is omitted.

Throughout this paper, the stable sets (manifolds) we discuss are exclusively those of system 
\eqref{eq:M1}\eqref{eq:M2}\eqref{eq:M3}\eqref{eq:M4}.
For simplicity, we occasionally write the McGehee system of these equations as 
\begin{equation*}
        Z' = \mathfrak{X}(Z),~~Z\in\mathcal{P}.
\end{equation*}
We deonte the flow of the system as $\Phi^\tau$.

Assume that $s\to c$ and $r(t)$ is bounded below by a positive constant, then $\tau\to+\infty$ is equivalent to $t\to+\infty$ and the smoothness of 
the McGehee vector field implies smoothnes of its physical counterpart $x(t)$. 
More precisely, let
\[
Y(\tau)=(\rho(\tau),s(\tau),v(\tau),w(\tau)),
\qquad \tau\in[\tau_0,+\infty),
\]
be a smooth McGehee orbit in the physical region \(\rho(\tau)>0\). 
 Set
\[
r(\tau)=\rho(\tau)^{-2}.
\]
Since $\dfrac{dt}{d\tau}=r^{3/2}>0$, \(t(\tau)\) is strictly increasing and has a smooth inverse on its image.
Let a zero-energy physical orbit be defined for all sufficiently
large $t$ and satisfy $s(t)\to c$ and $r(t)\to+\infty$.  Its McGehee time is
\[
\tau(t)=\tau(t_0)+\int_{t_0}^t r(u)^{-3/2}\,du.
\]
Since $s\to c$, $U(s(t))$ is bounded for large $t$.  The zero-energy relation gives
$|v(t)|\leq C$, and hence
\[
\frac{d}{dt}r(t)^{3/2}=\frac32v(t)\leq \frac32C.
\]
Therefore $r(t)^{3/2}\leq C_1(1+t)$ for large $t$, and
\[
\tau(t)-\tau(t_0)
\geq \frac1{C_1}\int_{t_0}^t\frac{du}{1+u}
\longrightarrow+\infty.
\]
Thus $\displaystyle\lim_{t\to+\infty}\tau(t) = +\infty$.
Conversely, since $r$ is
bounded below by a positive constant, then integrating $\dfrac{dt}{d\tau}=r^{3/2}$ gives
\[
t(\tau)=t_0+\int_{\tau_0}^{\tau}r(\sigma)^{3/2}\,d\sigma.
\]
Thus $t(\tau)\longrightarrow+\infty$ as $\tau\to+\infty$. Hence,  $t:[\tau_0,+\infty)\longrightarrow [t_0,+\infty)$
is a global orientation-preserving diffeomorphism on the interior intervals, with global inverse
\begin{equation}\label{eq:tau(t)}
        \tau=\tau(t),\qquad t\in[t_0,+\infty).
\end{equation}
Moreover,
\[
\frac{d\tau}{dt}=r(\tau(t))^{-3/2}=\rho(\tau(t))^3.
\]
Since the McGehee vector field is smooth in the collision-free region and the time-change satisfies
\[
\frac{dt}{d\tau}>0,
\]
the inverse \(\tau(t)\) is smooth on \((t_0,+\infty)\).
Thus $t\to+\infty$ and $\tau\to+\infty$ are equivalent in the situations
used below.
Consequently, $x(t)=r(\tau(t))s(\tau(t))$ is smooth on \((t_0,+\infty)\) wherever the
McGehee orbit is collision-free. In particular, it is legitimate to differentiate \(x(t)\) twice with respect to the physical time \(t\).

We now prove the converse correspondence directly, without using any formula that was itself
derived from the Newton equations
\begin{proposition}[Equivalence between McGehee and Newton equations]
\label{prop:Equivalence between McGehee and Newton}
Let
\[
Y(\tau)=(\rho(\tau),s(\tau),v(\tau),w(\tau))\in\mathcal P
\]
be a smooth solution of \eqref{eq:M1}--\eqref{eq:M4} on an interval on
which $\rho>0$.  Define $r=\rho^{-2}$,
$dt/d\tau=r^{3/2}$, and
\[
x(t)=r(\tau(t))s(\tau(t)).
\]
Then $x$ is a zero-energy solution of
\[
\ddot x=\nabla U(x).
\]
\end{proposition}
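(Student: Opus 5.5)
The plan is to reverse the derivation above, converting McGehee derivatives into physical-time derivatives through $d\tau/dt=r^{-3/2}$ and using only \eqref{eq:M1}--\eqref{eq:M4} together with the constraint $Y\in\mathcal P$. Throughout, a dot denotes $d/dt$; on the interval in question $\rho>0$, $r=\rho^{-2}$ is smooth and positive, and $\tau(t)$ is a smooth increasing change of time.

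\emph{Step 1 (radial velocity).} From \eqref{eq:M1}, $r'=-2\rho^{-3}\rho'=\rho^{-2}v=rv$. Hence
\[
\dot r=r^{-3/2}r'=r^{-1/2}v,\qquad\text{so}\qquad v=r^{1/2}\dot r .
\]
\emph{Step 2 (shape velocity).} From \eqref{eq:M2}, $\dot s=r^{-3/2}w$, so $w=r^{3/2}\dot s$. These recover the defining relations \eqref{eq:intro-mcgehee-variables} along the orbit.

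\emph{Step 3 (energy).} Since $s\in\mathcal E$ and $w\in T_s\mathcal E$, we have $\langle s,w\rangle=0$. Then
\[
\dot x=\dot r s+r\dot s=r^{-1/2}(vs+w),\qquad \|\dot x\|^2=r^{-1}(v^2+\|w\|^2)=2r^{-1}U(s)=2U(x),
\]
using the constraint in $\mathcal P$ and the homogeneity $U(rs)=r^{-1}U(s)$. This gives zero energy.

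\emph{Step 4 (radial acceleration).} Differentiate $v=r^{1/2}\dot r$ in $\tau$:
\[
v'=\tfrac12 r\dot r^2+r^2\ddot r .
\]
\eqref{eq:M3} gives $v'=\tfrac12\|w\|^2$. Using $r\dot r^2=v^2$, $\|w\|^2=r^3\|\dot s\|^2$ and the constraint $v^2=2U(s)-\|w\|^2$, we obtain
\[
r^2\ddot r=\tfrac12\|w\|^2-\tfrac12v^2=\|w\|^2-U(s),
\qquad\text{that is}\qquad
\ddot r-r\|\dot s\|^2=-r^{-2}U(s)=r^{-2}\langle\nabla U(s),s\rangle
\]
by Euler's relation. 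This is the one point where the constraint is genuinely needed: \eqref{eq:M3} alone is the reduced form of the radial equation, valid only on the energy surface.

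\emph{Step 5 (tangential acceleration).} Compute $D_\tau w=r^{3/2}D_t(r^{3/2}\dot s)=\tfrac32 r^2\dot r\dot s+r^3D_t\dot s$. Compare this with \eqref{eq:M4}, noting that $vw=r^2\dot r\dot s$. The result is
\[
2\dot r\dot s+rD_t\dot s=r^{-2}\nabla_{\mathcal E}U(s).
\]
\emph{Step 6 (assembly).} By \eqref{eq:tangential projection of w'}, $\ddot s=D_t\dot s-\|\dot s\|^2 s$, so the polar decomposition \eqref{eq:polar acceleration decomposition} holds as an identity for any smooth curve $x=rs$. Inserting Steps 4 and 5 gives
\[
\ddot x=r^{-2}\bigl(\langle\nabla U(s),s\rangle s+\nabla_{\mathcal E}U(s)\bigr)=r^{-2}\nabla U(s)=\nabla U(rs),
\]
using $\nabla U(s)=\langle\nabla U(s),s\rangle s+\nabla_{\mathcal E}U(s)$ and the $(-2)$-homogeneity of $\nabla U$.

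The main point requiring care is Step 4. Without the constraint, \eqref{eq:M3} does not reproduce the Newtonian radial equation, so the proof must use that $Y$ stays in $\mathcal P$ (the hypothesis $Y(\tau)\in\mathcal P$). Everything else is a chain-rule bookkeeping check, with no formula borrowed from the forward derivation: the identities \eqref{eq:tangential projection of w'} and \eqref{eq:polar acceleration decomposition} are purely kinematic.
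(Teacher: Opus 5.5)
Your proposal is correct and follows the paper's proof essentially step for step. You recover $v=r^{1/2}\dot r$ and $w=r^{3/2}\dot s$ from \eqref{eq:M1}--\eqref{eq:M2}, combine \eqref{eq:M3} with the zero-energy constraint for the radial equation, use \eqref{eq:M4} for the tangential one, assemble via the polar acceleration decomposition and Euler's relation, and check zero energy from $\|\dot x\|^2=r^{-1}(v^2+\|w\|^2)$; the only difference, verifying the energy first rather than last, is immaterial.
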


\begin{proof}
From \eqref{eq:M1} and \eqref{eq:M2},
\[
r'=rv,
\qquad
s'=w.
\]
Since $dt/d\tau=r^{3/2}$, this is equivalent to
\[
v=r^{1/2}\dot r,
\qquad
w=r^{3/2}\dot s.
\]
Differentiating the first identity with respect to $\tau$ gives
\[
v'=\frac12r\dot r^2+r^2\ddot r.
\]
Using \eqref{eq:M3} and the zero-energy constraint
$2U(s)=v^2+\|w\|^2$, we obtain
\[
r^2\ddot r = v' -\frac12 r\dot{r}^2
=\frac12\|w\|^2-\frac12v^2
=\|w\|^2-U(s)
=r^3\|\dot s\|^2-U(s).
\]
Equivalently,
\begin{equation}\label{A}
\ddot r-r\|\dot s\|^2=-\frac{U(s)}{r^2}.
\tag{A}
\end{equation}
Next,
\[
D_\tau w
=r^{3/2}D_t(r^{3/2}\dot s)
=\frac32r^2\dot r\dot s+r^3D_t\dot s.
\]
Substituting \eqref{eq:M4} and $vw=r^2\dot r\dot s$ yields
\[
r^3D_t\dot s+2r^2\dot r\dot s
=\nabla_{\mathcal E}U(s),
\]
or
\begin{equation}\label{B}
rD_t\dot s+2\dot r\dot s
=\frac1{r^2}\nabla_{\mathcal E}U(s).
\tag{B}
\end{equation}
The polar acceleration decomposition is
\begin{equation}\label{C}
\ddot x
=(\ddot r-r\|\dot s\|^2)s
+\bigl(rD_t\dot s+2\dot r\dot s\bigr).
\tag{C}
\end{equation}
Combining \eqref{A}--\eqref{C},
\begin{equation}\label{D}
\ddot x
=\frac1{r^2}\bigl(\nabla_{\mathcal E}U(s)-U(s)s\bigr).
\tag{D}
\end{equation}

 According to Euler's homogeneous function relation and the fact that $U$ is a homogeneous function of degree $-1$, we have
 the gradient decomposition
 \begin{equation}\label{E}
        \nabla U(rs)= r^{-2}\nabla U(s) =\frac1{r^2}\bigl(\nabla_{\mathcal E}U(s)-U(s)s\bigr).
\tag{E}
 \end{equation}
Therefore $\ddot x=\nabla U(x)$.  

Finally, since $\dot{x} = r\dot{s} + \dot{r}s = r^{-1/2}w + r^{-1/2} vs$, we have 
$\|\dot{x}\|^2 = r^{-1}(\|w\|^2 + v^2)$. Therefore
\[
\frac12\|\dot x\|^2-U(x)
=\frac1r\left[\frac12(v^2+\|w\|^2)-U(s)\right]=0,
\]
so the reconstructed solution has zero energy.
\end{proof}

\begin{proposition}\label{prop:ea+ is a equilibrium}
       For any central configuration $a$, $e_a^+ = (0,a,\sqrt{2U(a)},0)$ is a equilibrium of the McGehee flow 
       of the system \eqref{eq:M1}\eqref{eq:M2}\eqref{eq:M3}\eqref{eq:M4}. 
\end{proposition}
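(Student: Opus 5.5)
The plan is to verify directly that $e_a^+=(0,a,\sqrt{2U(a)},0)$ lies in the phase space $\mathcal P$ and that every component of the vector field \eqref{eq:M1}--\eqref{eq:M4} vanishes there.

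\emph{Membership in $\mathcal P$.} We have $\rho=0\in\mathbb R$. Since a central configuration is by definition a collision-free critical point of $U|_{\mathcal E}$, we have $a\in\mathcal E_0$. The zero vector lies in $T_a\mathcal E_0$. The energy constraint reads $\frac12(v^2+\|w\|^2)=\frac12\cdot 2U(a)=U(a)$, which holds.

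\emph{Vanishing of the vector field.} We check each equation in turn.
\begin{itemize}
\item By \eqref{eq:M1}, $\rho'=-\frac12\rho v=0$ because $\rho=0$.
\item By \eqref{eq:M2}, $s'=w=0$.
\item By \eqref{eq:M3}, $v'=\frac12\|w\|^2=0$.
\item By \eqref{eq:M4}, $D_\tau w=\nabla_{\mathcal E}U(a)-\frac12 v\cdot 0=\nabla_{\mathcal E}U(a)$. This is zero precisely because $a$ is a critical point of $U|_{\mathcal E_0}$, i.e.\ a normalized central configuration.
\end{itemize}

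The only point needing care is the form of \eqref{eq:M3}. It is the reduced equation $v'=\frac12\|w\|^2$, obtained from $v'=\frac12v^2+\|w\|^2-U(s)$ using the zero-energy constraint, so it is valid on $\mathcal P$. In the unreduced form the computation also works: $\frac12\cdot2U(a)+0-U(a)=0$. We should also note that in \eqref{eq:M4} the covariant derivative vanishing together with $w=0$ and $s'=0$ means the tangent-bundle component is stationary.

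Hence the constant curve $Y(\tau)\equiv e_a^+$ solves the system, and by uniqueness of solutions of the smooth ODE on $\mathcal P$ (smooth near $\rho=0$ since $a\notin\Delta$), $e_a^+$ is an equilibrium. No step presents a real obstacle; the only substantive input is $\nabla_{\mathcal E}U(a)=0$.
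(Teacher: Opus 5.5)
Your proposal is correct and follows the paper's proof: both check componentwise that \eqref{eq:M1}--\eqref{eq:M4} vanish at $e_a^+$, and the only substantive input is $\nabla_{\mathcal E}U(a)=0$. You add a check that $e_a^+$ satisfies the zero-energy constraint, and hence lies in $\mathcal P$, which the paper leaves implicit; your closing appeal to ODE uniqueness is unnecessary, since a vanishing vector field already defines an equilibrium.
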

\begin{proof}
        At $e_a^+$, \eqref{eq:M1} gives
        $$\rho^{\prime}=-\frac12\cdot0\cdot v=0.$$
        \eqref{eq:M2} implies 
        $$s^{\prime}=w=0.$$
        Out of the system, \eqref{eq:M3} gives
        $$v^{\prime}=\frac12\|0\|^2=0.$$
        Finally, the equation \eqref{eq:M4} of $w$ and the fact of $a$ being a central configuration makes 
        $$D_\tau w=\nabla_\mathcal{E}U(a)-\frac12v\cdot0=0.$$

\end{proof}

\begin{proposition}\label{prop: parabolic motions hit ea+ at infty}
        Given any $c$-parabolic motion, its McGehee lift converges to $e_c^+$ at infinity.
        In particular, $v(\tau)\uparrow v_*$ as $\tau\to +\infty$.
\end{proposition}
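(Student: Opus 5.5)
Let $x(t)=r(t)s(t)$ be a $c$-parabolic motion and $Y(\tau)=(\rho,s,v,w)$ its McGehee lift, defined for $\tau\geq\tau_0$ by the time change discussed before \Cref{prop:Equivalence between McGehee and Newton}. Since $r\to\infty$ and $r$ is bounded below on $[0,\infty)$, we have $\tau\to+\infty$ as $t\to+\infty$. Hence $\rho=r^{-1/2}\to0$ and $s(\tau)\to c$ directly from the definition. It remains to show $v\to v_*$ monotonically and $w\to0$. The zero-energy constraint $\tfrac12(v^2+\|w\|^2)=U(s)$ links these two facts: once $v\to v_*$ is known, $\|w\|^2=2U(s)-v^2\to2U_{min}-v_*^2=0$ follows at once.

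\textbf{Monotonicity and boundedness of $v$.} By \eqref{eq:M3}, $v'=\tfrac12\|w\|^2\geq0$, so $v$ is nondecreasing in $\tau$. Since $U(s(\tau))\to U(c)$, the constraint gives $|v|\leq\sqrt{2U(s)}$, which is bounded. So $v(\tau)\uparrow v_\infty$ for some finite $v_\infty\leq\liminf\sqrt{2U(s)}=v_*$.

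\textbf{Ruling out $v_\infty<v_*$.} Suppose $v_\infty<v_*$. Then $\|w\|^2=2U(s)-v^2\to v_*^2-v_\infty^2>0$, so $\|w\|\geq\eta>0$ for all large $\tau$. Integrating $v'=\tfrac12\|w\|^2\geq\tfrac12\eta^2$ over $[\tau_1,\infty)$ makes $v\to+\infty$, contradicting boundedness. Hence $v_\infty=v_*$, and therefore $\|w\|\to0$.

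Putting these together, $(\rho,s,v,w)\to(0,c,v_*,0)=e_c^+$, with $v\uparrow v_*$. The only delicate point, which I expect to be the main obstacle, is justifying that the McGehee time range is a half-line $[\tau_0,\infty)$ and that limits in $t$ transfer to limits in $\tau$. This is already settled by the time-change discussion preceding \Cref{prop:Equivalence between McGehee and Newton}. One should also note that $s\in\mathcal E_0$ throughout because the motion is collision-free, so $U(s)$ is finite and continuous along the orbit and the constraint applies pointwise. Positivity of $v$ eventually is not needed, since the argument uses only monotonicity and boundedness.
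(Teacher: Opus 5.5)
There is a genuine gap in your step ruling out $v_\infty<v_*$. From $|v|\le\sqrt{2U(s)}$ you only get $v_\infty\in[-v_*,v_*]$. Your assertion $\|w\|^2\to v_*^2-v_\infty^2>0$ holds only when $|v_\infty|<v_*$. In the remaining case $v_\infty=-v_*$ you get $\|w\|^2\to0$, and integrating $v'=\tfrac12\|w\|^2$ gives no contradiction with boundedness.

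This case is not vacuous for the ingredients your argument uses. Take the homothetic collapse orbit $\rho(\tau)=\rho_0e^{v_*\tau/2}$, $s\equiv c$, $v\equiv-v_*$, $w\equiv0$. It is defined for all $\tau\ge0$ and solves \eqref{eq:M1}--\eqref{eq:M4} with zero energy. It has $s\to c$ and $v$ monotone and bounded, yet $v\not\to v_*$. So your closing remark that positivity of $v$ is not needed is wrong. The outgoing hypothesis ($r\to\infty$, equivalently $\rho\to0$) must be used exactly here, and you only used it for the $\rho$-component.

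The fix is short and is what the paper does. If $v_\infty=-v_*$, monotonicity gives $v(\tau)\le-v_*<0$ for all $\tau$. Hence $\rho'=-\tfrac12\rho v\ge\tfrac{v_*}{2}\rho$, so $\rho\to+\infty$; equivalently $\dot r=r^{-1/2}v<0$ and $r$ decreases. Either way this contradicts $\rho\to0$.

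With this case added, your argument is complete and somewhat more elementary than the paper's. The paper first proves $w\to0$ from $w\in L^2$ together with uniform continuity, using the bound on $D_\tau w$. It then reads off $v_\infty=\pm v_*$ from the energy relation and excludes $-v_*$ by the same outgoing argument. Your lower bound $\|w\|\ge\eta$ in the case $|v_\infty|<v_*$ avoids that Barbalat-type step entirely.
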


\begin{proof}

Since for an $c$-parabolic motion, \(r(t)\to+\infty, s(t)\to c\), and $\displaystyle\lim_{\tau\to+\infty}t(\tau) = +\infty $ we have
\[
\rho(\tau) = r(t(\tau))^{-1/2}\rightarrow 0,~~s(\tau)\rightarrow c,~~U(s(\tau))\rightarrow U(c)~~(\tau\rightarrow+\infty).
\]

We next claim that this implies \(w(\tau)\to0\). 
The zero-energy identity \eqref{energy relation of McGehee coordinates} gives
that both \(v\) and \(w\) are bounded for large \(\tau\). By \eqref{eq:M3}, \(v\) is monotone nondecreasing and bounded above. Hence \(v(\tau)\) has a finite limit as \(\tau\to+\infty\). Consequently,
\[
\int_{\tau_0}^{+\infty}\|w(\tau)\|^2\,d\tau
=
2\int_{\tau_0}^{+\infty}v'(\tau)\,d\tau
=
2\left(\lim_{\tau\to+\infty}v(\tau)-v(\tau_0)\right)
<+\infty.
\tag{4}
\]
Thus
\[
w\in L^2([\tau_0,+\infty)).
\]
In a small collision-free neighborhood of \(c\), 
the McGehee vector field is smooth. Since \(s(\tau)\to c\), \(v(\tau)\) and \(w(\tau)\) 
remain bounded for large \(\tau\). The equation
\[
D_\tau w=\nabla_{\mathcal E}U(s)-\frac12vw
\]
therefore, \(D_\tau w\) is bounded for large \(\tau\). 
In local coordinates, this means that \(w'\) is bounded. Therefore \(w\) is 
uniformly continuous on \([\tau_0,+\infty)\) and hence $w(\tau)\to0$.

Taking the limit in the zero-energy identity \eqref{energy relation of McGehee coordinates}, and using \(s(\tau)\to c\) and \(w(\tau)\to0\), we obtain
\begin{equation*}
        \frac{1}{2}v(+\infty)^2 = U(c),
\end{equation*}
where $v(+\infty) = \displaystyle\lim_{\tau\to+\infty}v(\tau)$.
For an outgoing parabolic ray, \(r(t)\to+\infty\) in forward time, and the asymptotic McGehee radial 
velocity $v$ is positive. 
Otherwise 
\[
\rho' = -\frac{1}{2}\rho v
\]
makes $\rho\to+\infty$. Therefore
\[
v(+\infty)=v_*.
\]

Finally, since
\[
\rho=r^{-1/2}\to0,\qquad s\to c,\qquad w\to0,\qquad v\to v_*,
\]
the McGehee limit of the $c$-parabolic motions is $e_c^+$.
Moreover, along the McGehee flow 
\(v\) is nondecreasing since $ v'=\dfrac12\|w\|^2\geq 0$.
On the otherhand, since \(v(\tau)\to v_*\), we have $v(\tau)\leq v_*$,
as desired.

\end{proof}

As a contrary of \Cref{prop: parabolic motions hit ea+ at infty}, we here verify that
any trajectory of McGehee flow converging to $e_c^+$ at infinity can be natural 
interpreted to be a true $c$-parabolic motion.

\begin{proposition}[McGehee convergence to $e_c^+$ gives a parabolic motion]
\label{prop:McGehee orbit convergence to ea+ gives a parabolic motion}
Let $Y(\tau)\in\mathcal P$ be a smooth solution of
\eqref{eq:M1}--\eqref{eq:M4} for $\tau\geq\tau_0$, with $\rho(\tau)>0$,
and suppose
\[
Y(\tau)\longrightarrow e_c^+=(0,c,v_*,0).
\]
Then the reconstructed curve $x(t)=r(\tau(t))s(\tau(t))$ is a global
forward zero-energy Newtonian solution and
\[
\frac{x(t)}{t^{2/3}}\longrightarrow\alpha c.
\]
\end{proposition}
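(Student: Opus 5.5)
By \Cref{prop:Equivalence between McGehee and Newton}, the reconstructed curve $x(t)=r(\tau(t))s(\tau(t))$ solves $\ddot x=\nabla U(x)$ with zero energy on the image of $[\tau_0,\infty)$ under $t(\tau)$. So the remaining tasks are two. First, show that this image is a forward half-line, so the solution is global forward. Second, prove the asymptotics $t^{-2/3}x(t)\to\alpha c$.

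For the first task I use only the convergence $Y(\tau)\to e_c^+$. Since $v(\tau)\to v_*>0$, there is $\tau_1\geq\tau_0$ with $v\geq v_*/2$ on $[\tau_1,\infty)$. By \eqref{eq:M1}, $\rho'\leq-\frac{v_*}{4}\rho$ there. Hence $\rho$ decreases exponentially, and $r=\rho^{-2}$ increases exponentially in $\tau$. In particular $r$ is bounded below by a positive constant on $[\tau_0,\infty)$, because $\rho$ is continuous and positive on the compact piece $[\tau_0,\tau_1]$. Then $t(\tau)=t_0+\int_{\tau_0}^\tau r^{3/2}\,d\sigma\to+\infty$, exactly as in the discussion preceding \eqref{eq:tau(t)}. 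Also, $s(\tau)\to c\in\mathcal E_0$ and the orbit lies in $\mathcal P$ with $s\in\mathcal E_0$, so the curve is collision-free. Thus $x$ is defined on all of $[t_0,\infty)$, with $r\to\infty$ and $s\to c$.

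For the asymptotics I convert everything to physical time. The key identity is
\[
\frac{d}{dt}r^{3/2}=\frac32 r^{1/2}\dot r=\frac32 v.
\]
Since $t\to\infty$ if and only if $\tau\to\infty$, we have $v(\tau(t))\to v_*$ as $t\to\infty$. A Cesàro-type (l'Hôpital) argument then gives $r(t)^{3/2}/t\to\frac32v_*$, and so
\[
\frac{r(t)}{t^{2/3}}\to\left(\tfrac32 v_*\right)^{2/3}=\left(\tfrac94\cdot2U_{min}\right)^{1/3}=\left(\tfrac{9U_{min}}{2}\right)^{1/3}=\alpha .
\]
Combining this with $s(t)=s(\tau(t))\to c$ yields $x(t)/t^{2/3}=(r/t^{2/3})\,s\to\alpha c$.

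No step is a real obstacle. The only point needing care is the first one: ensuring that the physical time really covers a half-line, and that $\tau\to\infty$ corresponds to $t\to\infty$. That is handled by the exponential decay of $\rho$ once $v$ exceeds $v_*/2$. Finally, I would verify the constant, which matches the definition of $\alpha$ in \eqref{eq:intro-minimal-configuration}.
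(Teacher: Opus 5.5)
Your proposal is correct and takes essentially the same route as the paper. You invoke \Cref{prop:Equivalence between McGehee and Newton}, use a positive lower bound on $r$ to get $t(\tau)\to+\infty$, and pass to the limit in $\frac{d}{dt}r^{3/2}=\frac32 v$ to obtain $r/t^{2/3}\to\alpha$. The only difference is cosmetic: the paper gets the lower bound on $r$ along a tail directly from $\rho\to0$, so your exponential-decay estimate for $\rho$ is more than is needed.
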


\begin{proof}
Since $\rho(\tau)\to0$, one has $r(\tau)\to+\infty$.  In particular,
$r$ is bounded below away from zero on a forward tail, so the time-change discussion
shows that $t(\tau)\to+\infty$ and gives the global inverse $\tau(t)$.
By \Cref{prop:Equivalence between McGehee and Newton}, the reconstructed
curve is a zero-energy Newtonian solution.

Moreover,
\[
\frac{d}{dt}r^{3/2}=\frac32r^{1/2}\dot r=\frac32v.
\]
Since $v(\tau(t))\to v_*$,
\[
\frac{r(t)^{3/2}}{t}\longrightarrow\frac32v_*,
\qquad
\frac{r(t)}{t^{2/3}}
\longrightarrow
\left(\frac32v_*\right)^{2/3}
=\left(\frac{9U_{min}}2\right)^{1/3}
=\alpha.
\]
Together with $s(\tau(t))\to c$, this gives
\[
\frac{x(t)}{t^{2/3}}
=\frac{r(t)}{t^{2/3}}s(\tau(t))
\longrightarrow\alpha c.
\]
\end{proof}

Finally, we establish an exponential decay estimate for orbits converging to $e_c^+$. 
Throughout the paper, we choose $\delta_0\in(0,v_*/2)$ so that 
$\overline{B_{\mathcal E}(c,\delta_0)}\subset\mathcal{E}_0$, where $B_{\mathcal E}(c,\delta_0)$ represents
the open ball in $\mathcal{E}$ centered at $c$.  We always assume that 
$\delta\in(0,\delta_0)$ unless otherwise stated, define
\begin{equation*}
        \mathcal{U}_c(\delta)=\{(\rho,s,v,w)\in\mathcal{P}:0\leq\rho<\delta,d_{\mathcal{E}}(s,c)<\delta,0\le v_*-v<\delta,\|w\|<\delta\}.
\end{equation*}
Every point in this set satisfies $v>v_*-\delta>0$ and hence belongs to
the outgoing branch $v=V(s,w)$ of the zero-energy surface.
\begin{proposition}\label{prop:outgoing zero-energy solutions locally exponentially converge}
Given any $\delta\in(0,\delta_0)$, 
Let \(Z:[0,\infty)\to\mathcal{P}\) be an orbit of the zero-energy McGehee system satisfying \(Z(\tau)\in\mathcal{U}_c(\delta)\) and \(Z(\tau)\to e_{c}^{+}\), with \(\rho=0\) allowed.
Then $Z(\tau)$ decay exponentially. More precisly, there is a constant $C$ only depending on $U$ and $\delta$,
such that for any $\tau>0$.
\begin{align*}
   &\rho(\tau) \leq C \mathrm{e}^{-\frac{1}{2}(v_*-\delta)\tau},~d_{\mathcal{E}}(s(\tau), c)\leq C\mathrm{e}^{-\frac{1}{6}(2v_*-\delta)\tau}\\ &0\leq v_* - v(\tau) \leq C\mathrm{e}^{-\frac{1}{2}(2v_*-\delta)\tau},~\|w(\tau)\|\leq C\mathrm{e}^{-\frac{1}{6}(2v_*-\delta)\tau}.\\ 
\end{align*}
\end{proposition}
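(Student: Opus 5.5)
The plan is to start from the two scalar quantities with the simplest equations, $\rho$ and the radial defect $F:=v_*-v$. Then $w$ and $s$ follow from an integral bound on $\|w\|^2$ and a bounded-derivative argument. Throughout, $Z(\tau)\in\mathcal U_c(\delta)$, so $v>v_*-\delta>0$, $\rho<\delta$, $\|w\|<\delta$, and $s$ stays in the compact collision-free ball $\overline{B_{\mathcal E}(c,\delta_0)}$. The constants will depend only on $U$ and $\delta$.

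\emph{Step 1 ($\rho$).} From \eqref{eq:M1} and $v>v_*-\delta$ we get $\rho'\le-\frac12(v_*-\delta)\rho$ for $\rho\ge0$. Gronwall then gives $\rho(\tau)\le\rho(0)e^{-\frac12(v_*-\delta)\tau}\le\delta e^{-\frac12(v_*-\delta)\tau}$. If $\rho(0)=0$ this bound is trivial.

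\emph{Step 2 ($F$).} The defect satisfies $F\ge0$; this is part of the definition of $\mathcal U_c(\delta)$, and it also follows from \Cref{prop: parabolic motions hit ea+ at infty}. Its equation is $F'=-v'=-\frac12\|w\|^2$ by \eqref{eq:M3}. The zero-energy constraint \eqref{energy relation of McGehee coordinates}, together with $U(s)\ge U_{min}$ (minimality of $c$), gives
\[
\|w\|^2=2U(s)-v^2=2\bigl(U(s)-U_{min}\bigr)+(v_*+v)F\ge(2v_*-\delta)F .
\]
Hence $F'\le-\frac12(2v_*-\delta)F$, and so $F(\tau)\le\delta e^{-\frac12(2v_*-\delta)\tau}$. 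This is the key use of minimality.

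\emph{Step 3 ($w$).} This is the main obstacle, because the energy identity bounds $\|w\|^2$ only from below by $F$; the term $U(s)-U_{min}$ is uncontrolled. I will instead use the integral identity $\int_\tau^\infty\|w\|^2\,d\sigma=2F(\tau)$, which holds since $v\to v_*$. On the region in question, \eqref{eq:M4} gives $\|D_\tau w\|\le K:=\max_{\overline{B(c,\delta_0)}}\|\nabla_{\mathcal E}U\|+\frac12v_*\delta$. Consequently $\frac{d}{d\tau}\|w\|\ge-K$ wherever $w\neq0$.

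Now set $a=\|w(\tau)\|$. Then $\|w\|\ge a/2$ on $[\tau,\tau+a/(2K)]$. Integrating yields $a^3/(8K)\le 2F(\tau)$, so
\[
\|w(\tau)\|\le(16K F(\tau))^{1/3}\le Ce^{-\frac16(2v_*-\delta)\tau}.
\]

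\emph{Step 4 ($s$).} Since $s(\sigma)\to c$ and $s'=w$, the spherical distance is bounded by the remaining length:
\[
d_{\mathcal E}(s(\tau),c)\le\int_\tau^\infty\|w(\sigma)\|\,d\sigma\le C\int_\tau^\infty e^{-\frac16(2v_*-\delta)\sigma}d\sigma=\frac{6C}{2v_*-\delta}e^{-\frac16(2v_*-\delta)\tau}.
\]
Collecting Steps 1 through 4 and enlarging $C$ gives all four estimates of \Cref{prop:outgoing zero-energy solutions locally exponentially converge}.
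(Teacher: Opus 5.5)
Your proof is correct and follows essentially the same route as the paper. You apply Gronwall to $\rho$, and to $F=v_*-v$ via minimality of $c$ in $\|w\|^2=2(U(s)-U_{min})+(v_*+v)F\geq(2v_*-\delta)F$; then you combine the tail identity $\int_\tau^\infty\|w\|^2\,d\sigma=2F(\tau)$ with $\|D_\tau w\|\leq K$ to get $\|w(\tau)\|^3\leq 16KF(\tau)$, and finish with the remaining-length bound for $d_{\mathcal E}(s,c)$, with constants matching the paper's ($C_2=(16K\delta)^{1/3}=2(2K\delta)^{1/3}$).
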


\begin{proof}
Set $  F(\tau):=v_*-v(\tau)\geq0$, 
from \(v'\geq0\) and the limit \(v_*\), it follows that \(F=\nu_*-\nu\geq0\) and
 $F(\tau) \downarrow 0$ as $\tau\to +\infty$.
Using the minimality of \(c\) and the zero-energy relation and \(v_*^2=2U_{min}\), one obtains
\[
\begin{aligned}
        \|w\|^2
        &=2U(s)-v^2  \\
        &=2\bigl(U(s)-U_{min}\bigr)+(v_* + v)F\\
        &>(2v_* - \delta)F
\end{aligned}
\]
 therefore 
\begin{equation*}
        F'(\tau) = -\frac{1}{2}\|w(\tau)\|^2\leq-\frac{1}{2} (2v_* - \delta) F(\tau).
\end{equation*}
By Gronwall's inequality, we have that  for all $\tau>0$, 
\begin{align*}
       0\leq F(\tau)&\leq F(0)\mathrm{e}^{-\frac{1}{2}(2v_* - \delta)\tau} \\
        &=(v_* - v(0))\mathrm{e}^{-\frac{1}{2}(2v_* - \delta)\tau}\\
        &\leq\delta\mathrm{e}^{-\frac{1}{2}(2v_* - \delta)\tau}\\
        &:=C_1\mathrm{e}^{-\frac{1}{2}(2v_* - \delta)\tau}
\end{align*}

Next, we prove exponential decay of $w$. Since $F'(\tau) = -\dfrac{1}{2}\|w(\tau)\|^2$ and $F(\tau)\to 0$, the integrating gives
\begin{equation*}
        F(\tau) = \frac{1}{2}\int_\tau^\infty \|w(\sigma)\|^2d\sigma  ,~\tau>0,
\end{equation*}
hence 
\begin{equation}\label{eq:tail estimate of w}
        \int_\tau^\infty \|w(\sigma)\|^2d\sigma \leq 2C_1\mathrm{e}^{-\frac{1}{2}(2v_* - \delta)\tau}.
\end{equation}
Because the McGehee vector field is $C^k$ in $\mathcal{U}(\delta)$,  
\begin{equation*}
        \|D_\tau w\|\leq\sup_{d_\mathcal{E}(s,c)<\delta} \|\nabla_\mathcal{E}U(s)\| +\frac{1}{2}v_*\delta:=K ,~\tau>0,
\end{equation*}
$K$ only depend on $\delta$ and $U$. For any $\tau>0$, write $y(\tau) = \|w(\tau)\|$.
 $w(\tau)$ is $C^k$.
Parallel transporting \(w(\tau)\) along the orbit into a common tangent space yields, for \(\tau_2 \ge \tau_1\),
\[
\big|y(\tau_2) - y(\tau_1)\big| \le \int_{\tau_1}^{\tau_2} \|D_\tau w\| \, d\tau \le K|\tau_2 - \tau_1|.
\]
Consequently, given any $\tau\ge0$, on the interval $\left[ \tau, \tau + \dfrac{y(\tau)}{2K} \right]$, one obtains
\begin{equation*}
        y(\sigma) \geq y(\tau) - K(\sigma -\tau)\ge y(\tau) - K\frac{y(\tau)}{2K} = \frac{y(\tau)}{2}.
\end{equation*} 
Hence, 
\begin{equation*}
        \int_{\tau}^{\infty} \| w(\sigma) \|^2d\sigma \geq \int_{\tau}^{\tau +\frac{y(\tau)}{2K}} \frac{y^2(\tau)}{4} d\sigma = \frac{y^3(\tau)}{8K}.
\end{equation*}
The lower bound estimate obtained and the tail estimate gives
 \eqref{eq:tail estimate of w} yields 
\begin{equation*}
        \|w(\tau)\|=y(\tau) \leq C_2 \mathrm{e}^{-\frac{1}{6}(2v_* - \delta)\tau} ,~\tau>0,
\end{equation*}
where $C_2 = 2(2K\delta)^{\frac{1}{3}}$ is independent of the orbit.

Next, we deduce the exponential decay of $s$. Since $s' =w$ and $s(\tau)\rightarrow c$, 
\begin{align*}
        d_\mathcal{E}(s(\tau),c)&\leq \int_{\tau}^{\infty} \|s'(\sigma)\|d\sigma\\
        &= \int_{\tau}^{\infty} \|w(\sigma)\|d\sigma\\
        &\leq C_2\int_{\tau}^{\infty} \mathrm{e}^{-\frac{1}{6}(2v_* - \delta)\sigma}d\sigma\\
        &=C_3\mathrm{e}^{-\frac{1}{6}(2v_* - \delta)\tau},
\end{align*}
where $C_3 = \dfrac{12(2K\delta)^{\frac{1}{3}}}{2v_*-\delta}$ is independent of the orbit.

Finally, the radial equation \eqref{eq:M1} implies 
\begin{equation*}
        \rho'(\tau)\leq -\frac{1}{2}(v_* - \delta)\rho(\tau), ~\tau>0,
\end{equation*}
and using Gronwall's inequality again to have
\begin{equation*}
        \rho(\tau)\leq \rho(0) \mathrm{e}^{-\frac{1}{2}(v_* - \delta)\tau}\leq C_4 \mathrm{e}^{-\frac{1}{2}(v_* - \delta)\tau} ,~\tau>0,
\end{equation*} 
here $C_4 = \delta$.

Define $C:= \max\{ C_1, C_2, C_3, C_4\}$, then C only depend on $U$ and $\delta$, as desired.

\end{proof}

For $0<\delta<\delta_0$, define the local strong stable set
\begin{equation}\label{eq:strong stable set near ec+}
W_{loc}^{ss}(e_c^+;\delta)
:=
\left\{Z\in\mathcal P:
\Phi^\tau(Z)\in\mathcal U_c(\delta)\text{ for every }\tau \ge 0,
\quad
\Phi^\tau(Z)\to e_c^+
\right\}.
\end{equation}
The next two sections identify its tangent space and prove directly that it
is a smooth embedded strong stable manifold with boundary in the physical
half-space.

\section{Linearization at infinity}\label{secn:Linearization at infinity}

This section is devoted to the construction of the linearization at infinity. 
Let $G(\rho,s,v,w) = \dfrac{1}{2}(v^2+\|w\|^2) - U(s)$, the energy constraint is $G=0$. Since $v>v_*-\delta>0$, $v=V(s,w)$. 
Thus the zero energy hypersurface near $e_c^+$ is unique, yielding the physical outgoing branch. Thus 
$(\rho,s,w)$ is sufficient for our argument near $e_c^+$, we call it the reduced McGehee coordinates.

The existence of the manifold will be established in the phase space $\mathcal{P}$,
 after which we restrict the construction back to $\rho \geq 0$.
The extension of the McGehee flow from \(\rho>0\) to \(\rho\in\mathbb R\) is justified as follows.
 Equations \eqref{eq:M2}, \eqref{eq:M3}, \eqref{eq:M4} 
together with the zero-energy constraint are independent of $\rho$, and the equation
\[
\rho' = -\frac12 \rho v
\]
remains smooth for negative $\rho$.  More importantly, along any solution,
\[
\rho(\tau) = \rho(0) \exp\!\left(-\frac12 \int_0^\tau v(\sigma)\,d\sigma\right),
\]
hence the sign of $\rho$ is preserved; in particular, the sets
$\{\rho>0\}$, $\{\rho=0\}$ and $\{\rho<0\}$ are invariant under the flow.

The  reduced outgoing McGehee system are
\begin{align}
\rho' &= -\frac{1}{2}\rho V(s, w), 
\tag{M1'}\label{eq:M1'} \\
 s' &= w,  
\tag{M2'}\label{eq:M2'} \\
D_\tau w &= \nabla_\mathcal{E} U(s) - \frac{1}{2}V(s, w)w. 
\tag{M4'}\label{eq:M4'}
\end{align}
with the zero energy constraint.

\subsection{Linearization at $e_c^+$}\label{subscn:Linearization at ec+}
We write $(\rho, \xi, \delta v,  W)\in\mathbb{R}_\rho\oplus T_c\mathcal{E}_\xi\oplus \mathbb{R}_v\oplus T_c\mathcal{E}_W$ as the
first order variation or linearization of the coordinates $(\rho, s , v, w)$. Let $Z(\epsilon) = (\rho(\epsilon), s(\epsilon), v(\epsilon), w(\epsilon))$ 
be an arbitrary smooth curve satisfying the zero energy constraint such that $Z(0) = e_c^+$, write $ Z'(0)=(\rho, \xi, \delta v,  W) \in T_{e_c^+}\mathcal{P}$,
then 
\begin{equation*}
        \frac{d}{d\epsilon}\Big|_{\epsilon =0}G(Z(\epsilon)) = v_*\delta v - d(U|_{\mathcal{E}_0})_c[\xi] = 0. 
\end{equation*}
On the other hand, $v_*>0$, therefore $\delta v =0$ at $e_c^+$ and 
\begin{equation*}
        T_{e_c^+}\mathcal{P}=\left\{(\rho,\xi,\delta v,W):\delta v=0\right\}.
\end{equation*}
Throughout this paper, we denote $\mathscr{T}_0 =  \mathbb{R}_\rho \oplus T_c\mathcal{E}_\xi\oplus T_c\mathcal{E}_W$ for simplicity,
then there is a natural identification $T_{e_c^+}\mathcal{P} \cong \mathscr{T}_0$.

Given any sufficiently small $\xi\in T_c\mathcal{E}$, let $s=\exp_c\xi$, using parallel transport along 
the geodesic 
\begin{align*}
        \gamma_\xi:[0,1]&\rightarrow\mathcal{E}\\
        \gamma_\xi(\theta)&=\exp_c(\theta\xi),
\end{align*}
we move an arbitrary $w\in T_s\mathcal{E}$ to be $W\in T_c\mathcal{E}$, precisely, 
denote the parallel transport along $\gamma_\xi$ from $c$ to $s=\exp_c\xi$
\begin{equation*}
        P_\xi:T_c\mathcal{E}\rightarrow T_s\mathcal{E},
\end{equation*}
and its inverse $P_\xi^{-1}$. In particular,
$P_0 = \id_{T_c\mathcal{E}}$.
We define
\begin{equation*}
        W = P_\xi^{-1} w
\end{equation*} 
Hence there is a coordinates change near $e_c^+$ and it is locally diffeomorphism:
\begin{equation*}
        (\rho,\xi,W) \mapsto (\rho,s,w) = (\rho,\exp_c\xi,P_\xi W).
\end{equation*}
Note that the Riemannian metric is conserved along parallel transport,
 we define the radial velocity function $\hat{V}$ of $(\xi, W)$ near $e_c^+$ as follows, 
\begin{align*}
        \hat{V}(\xi,W) :&= V(\exp_c\xi,P_\xi W)\\
        &=\sqrt{2U(\exp_c\xi) - \|P_\xi W\|^2}\\
        &=\sqrt{2U(\exp_c\xi) - \|W\|^2}.
\end{align*}
Since $c$ is a MNCC, the Taylor formula for $U$  at $c$ is 
\begin{equation*}
        U(\exp_c\xi) = U_{min} + \frac{1}{2}\langle H\xi,\xi\rangle + O(\|\xi\|^3),
\end{equation*}
then 
\begin{equation}\label{estimate hat V}
\begin{aligned}
         v=\hat{V}(\xi,W) &= \sqrt{2 U_{min} + \langle H\xi, \xi \rangle + O(\|\xi\|^3) - \|W\|^2}\\
                       &= \sqrt{ v_*^2 + \langle H\xi, \xi \rangle + O(\|\xi\|^3) - \|W\|^2} \\
                       &= v_* +O(\|\xi\|^2 + \|W\|^2).
\end{aligned}
\end{equation}

First, \eqref{eq:M1'} and $V(s,w) = \hat{V}(\xi,W)$ gives
\begin{equation}\label{eq:Taylor formula of M1}
                \rho'  = -\frac{1}{2}\rho\left(v_* +O(\|\xi\|^2 + \|W\|^2)\right),
\end{equation}
yielding the linearized equation for $\rho$:
\begin{equation*}
        \rho' = -\frac{v_*}{2}\rho 
\end{equation*}

Second, differentiating  $s=\exp_c\xi$ with respect to $\tau$ yields
\begin{equation*}
        s' = (d\exp_c)_\xi[\xi']
\end{equation*}
Recall $s' = w = P_\xi W$, hence 
\begin{equation*}
        \xi' = [(d\exp_c)_{\xi}]^{-1} P_\xi [W], 
\end{equation*}
in particular,
\begin{equation*}
        (d\exp_c)_0 = I_{T_c\mathcal{E}}, ~P_0 = I_{T_c\mathcal{E}}.
\end{equation*}
Write $B(\xi) = [(d\exp_c)_{\xi}]^{-1} P_\xi:T_{c}\mathcal{E}\to T_{c}\mathcal{E}$, 
then $\xi' = B(\xi) W$.
Note that
$(d\exp_c)_0 = P_0 = \id_{T_c\mathcal{E}}$, then
$B(0) = \id_{T_c\mathcal{E}}$ and $B(\xi) = \id_{T_c\mathcal{E}} + O(\|\xi\|)$,
yielding the variation of equation for shape $s$
\begin{equation}\label{eq:Taylor formula of M2}
        \xi' = W + O(\|\xi\|\|W\|).
\end{equation}
Hence the linearized equation for $\xi$ is 
\begin{equation*}
        \xi' =W. 
\end{equation*}

Third, taking the derivative w.r.t. $\tau$ on both side of $W = P_\xi^{-1} w$ gives 
\begin{equation}\label{eqn 1 for compute W'}
        W' = (\nabla P^{-1})_\xi[\xi'] w + P_\xi^{-1} D_\tau w.
\end{equation}
Since the transport preserves the inner product, we have
\[
\|P_\xi\| = \|P^{-1}_\xi\| = 1, \qquad \|w\| = \|P_\xi W\| = \|W\|.
\]
Moreover, since $\xi' = W + O(\|\xi\|\|W\|)$, 
it follows that the first term of \eqref{eqn 1 for compute W'} is 
\begin{equation*}
\left|(\nabla P^{-1})_\xi[\xi'] w\right|
=O(\|\xi'\| \|w\|) 
= O(\|W\|^2).
\end{equation*}
Recall \eqref{eq:M4'} to have
\begin{equation}\label{eqn 3 for compute W'}
        \begin{aligned}
        P_\xi^{-1} D_\tau w &= P_\xi^{-1} \nabla_\mathcal{E}U(s) - \frac{1}{2}V(s,w) P_\xi^{-1} w\\
            & =  P_\xi^{-1} \nabla_\mathcal{E}U(\exp_c\xi) - \frac{1}{2}\hat{V}(\xi,W) W,
\end{aligned}
\end{equation}
where the first term is a pulled back gradient and satisfies 
\begin{equation}\label{eqn 4 for compute W'}
        P_\xi^{-1} \nabla_\mathcal{E} U(\exp_c\xi) = H\xi + O(\|\xi\|^2).
\end{equation}
Using \eqref{estimate hat V} to have 
\begin{equation}\label{eqn 5 for compute W'}
        -\frac{1}{2}\hat{V} (\xi,W) W = -\frac{1}{2}v_*W + O(\|\xi\|^2\|W\| + \|W\|^3).
\end{equation}
Combining \eqref{eqn 1 for compute W'} to \eqref{eqn 5 for compute W'} gives 
\begin{equation}\label{eq:Taylor formula of M4}
        W'  = H\xi - \frac{v_*}{2}W + O(\|\xi\|^2 + \|W\|^2),
\end{equation}
there the lineared version of equation \eqref{eq:M4'} is derived to be 
\begin{equation*}
        W' = H\xi -\frac{v_*}{2}W.
\end{equation*}

Summerize the derived linearizations of system \eqref{eq:M1'}\eqref{eq:M2'}\eqref{eq:M4'} as the following
\begin{empheq}[left=\empheqlbrace]{align}
        \rho' &= -\frac{v_*}{2}\rho, \tag{LM1}\label{eq:LM1}\\
         \xi'       &= W   \tag{LM2}\label{eq:LM2},\\
         W'         &= H\xi -\frac{v_*}{2}W  \tag{LM4}\label{eq:LM4},
\end{empheq}
We denote the matrix of the coefficients of the linear part by $A$.

\subsection{Computing the stable linear space at $e_c^+$}

We carefully distinguish the eigenvalues of the Hessian $H$ from the
spectral values of the linear system
\eqref{eq:LM1}\eqref{eq:LM2}\eqref{eq:LM4}. Choose an orthonormal basis
$\{e_j\}_{j=1}^{m-1}$ of $T_c\mathcal E$ consisting of eigenvectors of $H$:
\begin{equation*}
        He_j = \mu_j e_j, ~\mu_j\geq0.
\end{equation*}
Write $Q_H=(e_1,e_2,\dots,e_{m-1})$, since 
$H$ has its orthogonal decomposition $H=Q_H MQ_H^{-1}$ with $M = \diag\{\mu_1,\mu_2,\dots,\mu_{m-1}\}$,
its characteristic polynomial is 
\begin{align*}
       \det (\lambda I_{2m-1}- A) &= (\lambda + \frac{v_*}{2})\det\big[(\lambda^2 + \frac{v_*}{2}\lambda)I_{m-1} - H\big]\\
&=(\lambda + \frac{v_*}{2})\det\big[(\lambda^2 + \frac{v_*}{2}\lambda)Q_HQ_H^{-1} - Q_HMQ_H^{-1}\big]\\
&=(\lambda + \frac{v_*}{2})\Pi_{j=1}^{m-1}(\lambda^2 + \frac{v_*}{2}\lambda - \mu_j).
\end{align*}
Therefore $\mathrm{Spec}A\subset\mathbb{R}$.
In the direction $e_j$, write $\xi=\xi_j e_j$ and $W=W_j e_j$. Equations
\eqref{eq:LM2} and \eqref{eq:LM4} become
\[
\xi_j'=W_j,\qquad
W_j'=\mu_j\xi_j-\frac{v_*}{2}W_j,
\]
and hence 
\begin{equation*}
        \xi_j''+\frac{v_*}{2}\xi_j' - \mu_j\xi_j =0.
\end{equation*}
Let 
\begin{align*}
        \xi_j(\tau)&=Ce^{\lambda\tau}\\
        W_j(\tau) &= \xi_j'(\tau) = \lambda\xi_j(\tau)
\end{align*}
to get the characteristic equation
\begin{equation*}
        \lambda^2 + \frac{v_*}{2} \lambda - \mu_j = 0,
\end{equation*}
whose roots are 
\begin{equation*}
        \lambda_j^{\pm}(\mu_j) = \frac{-\dfrac{v_*}{2} \pm \sqrt{\dfrac{v_*^2}{4} + 4\mu_j}}{2},
\end{equation*}
then 
\begin{align*}
        \lambda_j^-<-\frac{v_*}{2}<0<\lambda_j^+~~&\text{for}~~\mu_j>0,\\
        \lambda_j^- = -\frac{v_*}{2}, \lambda_j^+ = 0~~&\text{for}~~\mu_j=0.
\end{align*}
Thus every shape direction has one strictly negative root, whereas the
second root is positive precisely on $(\ker H)^\perp$ and is zero on
$\ker H$.
Finally,
\[
\operatorname{Spec}(A)
\subset(-\infty,-v_*/2]\cup\{0\}\cup(0,+\infty).
\]

Define the self-adjoint operator
\[
\Lambda^-:T_c\mathcal E\longrightarrow T_c\mathcal E,
\qquad
\Lambda^-e_j=\lambda_j^-e_j,
\]
hence 
\begin{equation*}
        W = \Lambda^-\xi.
\end{equation*}
On the otherhand, since $\rho' = -\dfrac{v_*}{2}\rho$, $\rho$ admits the 
stable eigenvalues $-\dfrac{v_*}{2}<0$. Therefore, the negative spectral subspace of the full linearized system is
\begin{equation*}
        E_c^{ss}  = \Big\{ (\rho,\xi,W):\rho\in\mathbb{R},\xi\in T_c\mathcal{E}, W=\Lambda^-\xi \Big\}.
\end{equation*}
Therefore
\[
\dim E_c^{ss}=1+\dim T_c\mathcal E=1+(m-1)=m.
\]

\subsection{Linear space decomposition/spectral splitting}

With the linearization established above, we now describe the invariant
spectral decomposition of the tangent space at $e_c^+$.

\begin{proposition}[Linear space decomposition/spectral splitting]
\label{prop:Linear space decomposition/spectral splitting}
Let $c$ be a MNCC and put
\[
K:=\ker H,
\qquad
P:=K^\perp\subset T_c\mathcal E.
\]
Then
\begin{equation}
\label{decompostion of the tangent space at ec+}
T_{e_c^+}\mathcal P
=E_c^{ss}\oplus E_c^c\oplus E_c^u.
\end{equation}
Here
\begin{align*}
E_c^{ss}
&=\bigl\{(\rho,\xi,\Lambda^-\xi):
\rho\in\mathbb R,\ \xi\in T_c\mathcal E\bigr\},\\
E_c^c
&=\bigl\{(0,\xi_0,0):\xi_0\in K\bigr\},\\
E_c^u
&=\bigl\{(0,\xi_+,\Lambda^+\xi_+):\xi_+\in P\bigr\},
\end{align*}
where $\Lambda^-:T_c\mathcal E\to T_c\mathcal E,~~  \Lambda^- e_j = \lambda_j^- e_j$ is defined by the
negative roots $\lambda_j^-$ and
$\Lambda^+$ by the positive ones. The three subspaces are $A$-invariant,
$\dim E_c^{ss}=m$, and the decomposition is uniquely determined by $c$.
We also write
\[
E_c^{cu}:=E_c^c\oplus E_c^u.
\]
\end{proposition}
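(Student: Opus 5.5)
The plan is to diagonalize $A$ directly in the eigenbasis $\{e_j\}$ of $H$ and read off invariant subspaces. Under the identification $T_{e_c^+}\mathcal P\cong\mathscr T_0=\mathbb R_\rho\oplus T_c\mathcal E_\xi\oplus T_c\mathcal E_W$ of dimension $1+2(m-1)=2m-1$, the matrix $A$ of \eqref{eq:LM1}\eqref{eq:LM2}\eqref{eq:LM4} decouples. It acts on $\mathbb R_\rho$ by $-v_*/2$, and on each plane $\Pi_j=\operatorname{span}\{(0,e_j,0),(0,0,e_j)\}$ by the $2\times2$ matrix
\[
A_j=\begin{pmatrix}0&1\\ \mu_j&-v_*/2\end{pmatrix}.
\]
So $\mathscr T_0=\mathbb R_\rho\oplus\bigoplus_j\Pi_j$ is an $A$-invariant orthogonal splitting, and it suffices to analyze each $A_j$.

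For $\mu_j>0$ the roots $\lambda_j^-<-v_*/2<0<\lambda_j^+$ are distinct. Hence $\Pi_j$ splits into the eigenlines spanned by $(0,e_j,\lambda_j^\pm e_j)$, because $A_j(1,\lambda)^T=(\lambda,\mu_j-\tfrac{v_*}2\lambda)^T=\lambda(1,\lambda)^T$ by the characteristic equation.

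For $\mu_j=0$ the roots $-v_*/2$ and $0$ are again distinct, since $v_*>0$. The eigenvectors are $(0,e_j,-\tfrac{v_*}{2}e_j)$ and $(0,e_j,0)$. The second one is exactly the generator of $E_c^c$, since $A(0,\xi_0,0)=(0,0,H\xi_0)=0$ for $\xi_0\in K$.

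Summing these pieces gives the result. The $\rho$-line together with the $\lambda_j^-$-eigenlines spans $\{(\rho,\xi,\Lambda^-\xi)\}=E_c^{ss}$. The $\lambda_j^+$-lines with $\mu_j>0$ span $E_c^u=\{(0,\xi_+,\Lambda^+\xi_+):\xi_+\in P\}$, where $P=\operatorname{span}\{e_j:\mu_j>0\}$. The zero eigenlines span $E_c^c$. Each space is a sum of eigenvectors, so each is $A$-invariant. The three spaces span $\mathscr T_0$. Their dimensions are $m$, $\dim K$ and $\dim P$, which add to $m+(m-1)=2m-1$, so the sum is direct. Independence can also be seen directly: in each $\Pi_j$ the two eigenvectors are independent because the eigenvalues are distinct, and the $\rho$-line is orthogonal to every $\Pi_j$. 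Finally, $\Lambda^\pm$ is well defined independently of the choice of eigenbasis, because on each eigenspace of $H$ with eigenvalue $\mu$ it is the scalar $\lambda^\pm(\mu)$, that is, $\Lambda^\pm=\lambda^\pm(H)$ by functional calculus.

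Uniqueness takes a small argument. I will identify the three summands intrinsically as spectral subspaces of $A$:
\[
E_c^{ss}=\bigoplus_{\lambda\le -v_*/2}\ker(A-\lambda),\qquad E_c^c=\ker A,\qquad E_c^u=\bigoplus_{\lambda>0}\ker(A-\lambda).
\]
Since $A$ is diagonalizable with real spectrum contained in $(-\infty,-v_*/2]\cup\{0\}\cup(0,\infty)$, these are its generalized eigenspaces grouped by disjoint spectral sets. They therefore depend only on $A$, hence only on $H=\operatorname{Hess}_{\mathcal E}U(c)$, and so only on $c$.

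The main obstacle is the bookkeeping at $\mu_j=0$. There the radial eigenvalue $-v_*/2$ coincides with $\lambda_j^-$, so $-v_*/2$ may have multiplicity greater than one. I will check that this causes no Jordan block: $A$ restricted to $\mathbb R_\rho$ and each $A_j$ are separately diagonalizable, and they act on mutually invariant summands. This is also why the eigenvalue $-v_*/2$ must be placed entirely in $E_c^{ss}$ rather than being split between stable and center parts.
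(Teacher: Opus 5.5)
Your proof is correct and follows essentially the paper's route: both diagonalize $A$ along the eigenbasis of $H$, separating the radial line, the kernel directions, and the positive-eigenvalue directions. The differences are tactical. You get semisimplicity at $0$ and directness from the distinct eigenvalues of each $2\times2$ block plus a dimension count, whereas the paper uses an inner-product test against $\eta\in K$ and explicit decompositions on $K_\xi\oplus K_W$ and $P_\xi\oplus P_W$. Your description of the summands as spectral subspaces of $A$ also justifies ``uniquely determined by $c$'' more cleanly than the paper's proof, which only asserts it.
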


\begin{proof}
Since $H$ is self-adjoint and nonnegative,
$T_c\mathcal E=K\oplus P$, where $P$ is the direct sum of the eigenspaces
corresponding to the positive eigenvalues of $H$.

Solving $A(\rho,\xi,W)=0$ gives
\[
-\frac{v_*}{2}\rho=0,
\qquad W=0,
\qquad H\xi-\frac{v_*}{2}W=0,
\]
so
\[
\ker A=\{(0,\xi_0,0):\xi_0\in K\}.
\]
There is no nontrivial generalized eigenvector at the eigenvalue zero.
Indeed, if $Ay=(0,\eta,0)\in\ker A$, then
$W=\eta$ and $H\xi-(v_*/2)\eta=0$. Taking the inner product with
$\eta\in K$ gives 
\[
\langle H\xi,\eta \rangle - \frac{v_*}{2}\|\eta\|^2 = 
\langle \xi,H\eta \rangle - \frac{v_*}{2}\|\eta\|^2 =
- \frac{v_*}{2}\|\eta\|^2,
\]
yielding
\[
-\frac{v_*}{2}\|\eta\|^2=0,
\]
hence $\eta=0$ and the center space $E_c^c = \ker A$. 
On each positive eigenspace of $H$, the two roots satisfy
$\lambda_j^-<0<\lambda_j^+$. 
Hence we have precisely the unstable space 
\[
E_c^u=\{(0,\xi_+,\Lambda^+\xi_+):\xi_+\in P\}.
\]
Since $\rho' = -\dfrac{v_*}{2}\rho$ and $v_*>0$, the radial direction has eigenvalue
$-v_*/2<0$, it refuses center or unstable directions, hence $\rho$
belongs to $E_c^{ss}$ and $E_c^{ss}
=\bigl\{(\rho,\xi,\Lambda^-\xi):
\rho\in\mathbb R,\ \xi\in T_c\mathcal E\bigr\}$. 
It remains to verify the direct sum. 

A natural decomposition of $T_{e_c^+}\mathcal{P}$ is
\begin{align*}
        T_{e_c^+}\mathcal{P} &= \mathbb{R}_\rho\oplus T_c\mathcal{E}_\xi\oplus T_c\mathcal{E}_W\\
&=\mathbb{R}_\rho\oplus (K_\xi\oplus K_W)\oplus (P_\xi\oplus P_W)
\end{align*}
On $K_\xi\oplus K_W$, since $\Lambda^+|_K=0$
and $\Lambda^-|_K=-\frac{v_*}{2}I_K$, every pair
$(\xi_0,W_0)$ admits the unique decomposition
\[
(\xi_0,W_0)
=(\alpha,\Lambda^-\alpha)+(\beta,0),
\]
where
\[
\alpha=-\frac{2}{v_*}W_0,
\qquad
\beta=\xi_0-\alpha.
\]
The first summand belongs to the stable space,
and the second to the center space.
On $P_\xi\oplus P_W$, the operator
$\Lambda^+-\Lambda^-$ is invertible,
so every pair decomposes uniquely into
stable and unstable components. Combining the radial, kernel, and positive
eigenvalue parts proves the direct-sum decomposition and its uniqueness.
 All three summands are invariant under $A$.

\end{proof}

\section{Strong stable manifold and its tangent space at $e_c^+$}\label{secn:strong stable manifold and its tangent space at ec+}
In this section, we prove our theorem of strong stable manifold. 
The Newtonian potential is smooth on \(\mathcal{E}_0\), in this section, we let \(k \ge 3\) be an arbitrary finite integer. 
Due to the arbitrariness of \(k\), we can directly treat the problem in the \(C^\infty\) category in the next section.
\begin{theorem}\label{thm:strong stable set is a submfd and its tangent space is Ecss}
      Let $0<\delta<\delta_0$,  let $W_{loc}^{ss}(e_c^+;\delta)$ be as in \eqref{eq:strong stable set near ec+}, then $W_{loc}^{ss}(e_c^+;\delta)$ is a $m$ dimensional $C^k$ embedded
         submanifold  with boundary of $\mathcal{P}$ and $T_{e_c^+}W_{loc}^{ss}(e_c^+;\delta) = E_c^{ss}$. 
\end{theorem}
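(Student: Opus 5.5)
The plan is to build the manifold first in a small neighbourhood of $e_c^+$ by a Lyapunov--Perron argument in the reduced coordinates $(\rho,\xi,W)$. I would then identify that local graph with $W_{loc}^{ss}(e_c^+;\delta)$, using the exponential decay of \Cref{prop:outgoing zero-energy solutions locally exponentially converge}. Finally I would pass from the small neighbourhood to an arbitrary $\delta\in(0,\delta_0)$ by flowing backward.

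\emph{Step 1 (spectral gap and Lyapunov--Perron).} By \Cref{prop:Linear space decomposition/spectral splitting}, $\operatorname{Spec}(A|_{E_c^{ss}})\subset(-\infty,-v_*/2]$ and $\operatorname{Spec}(A|_{E_c^{cu}})\subset[0,\infty)$, with $E_c^{c}=\ker A$ semisimple. Fix the integer $k\geq3$ and choose a rate $\beta\in(0,v_*/2)$ with $k\beta<v_*/2$. Writing the reduced system \eqref{eq:M1'}, \eqref{eq:M2'}, \eqref{eq:M4'} as $y'=Ay+N(y)$, with $N(y)=O(|y|^2)$ by \eqref{eq:Taylor formula of M1}, \eqref{eq:Taylor formula of M2} and \eqref{eq:Taylor formula of M4}, I would cut off $N$ outside a ball of radius $\eta$, so that its Lipschitz constant is $O(\eta)$. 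For $y_0\in E_c^{ss}$ I would then solve
\[
y(\tau)=e^{A\tau}y_0+\int_0^\tau e^{A(\tau-\sigma)}\Pi^{ss}N(y)\,d\sigma-\int_\tau^\infty e^{A(\tau-\sigma)}\Pi^{cu}N(y)\,d\sigma
\]
in the Banach space $\{y:\sup_\tau e^{\beta\tau}|y(\tau)|<\infty\}$. The dichotomy estimates $\|e^{A\tau}\Pi^{ss}\|\leq Ce^{-(v_*/2)\tau}$ for $\tau\geq0$ and $\|e^{A\tau}\Pi^{cu}\|\leq C$ for $\tau\leq0$ make this map a contraction. Its fixed point gives $y_0+g(y_0)$ with $g:E_c^{ss}\to E_c^{cu}$, $g(0)=0$ and $Dg(0)=0$.

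\emph{Step 2 (regularity, the main technical point).} $C^k$ smoothness of $g$ follows from the fiber-contraction or scale-of-Banach-spaces argument, which needs $k\beta$ below the gap $v_*/2$; this was arranged in Step 1. I expect this bookkeeping, and checking that the cutoff does not spoil the dichotomy, to be the most laborious part; the HPS framework \cite{HirschPughShub1977} can be cited for it. Because $\{\rho=0\}$ and $\{\rho>0\}$ are flow-invariant and $E_c^{ss}$ projects isomorphically onto the $(\rho,\xi)$-coordinates, the graph meets $\{\rho\geq0\}$ in a $C^k$ manifold with boundary $\{\rho=0\}$. Its tangent space at $e_c^+$ is $E_c^{ss}$ because $Dg(0)=0$.

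\emph{Step 3 (identification with the strong stable set).} Let $Z\in W_{loc}^{ss}(e_c^+;\delta)$. \Cref{prop:outgoing zero-energy solutions locally exponentially converge} gives decay at the rate $\beta_0=\min\{(v_*-\delta)/2,(2v_*-\delta)/6\}>0$. After shrinking $\beta\leq\beta_0$, the tail $\Phi^\tau(Z)$, $\tau\geq T$, lies in the $\eta$-ball and in the weighted space, hence on the local graph by uniqueness of the fixed point. Conversely, points on the graph decay to $e_c^+$, so for small $\eta$ they stay in $\mathcal U_c(\delta)$; here $F\geq0$ is automatic because $v'\geq0$ and $v\to v_*$.

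For general $\delta$ I would write $W_{loc}^{ss}(e_c^+;\delta)$ as the set of points of $\bigcup_{n\in\mathbb N}\Phi^{-n}(\text{local graph})$ whose forward orbits stay in $\mathcal U_c(\delta)$. Each $\Phi^{-n}$ is a $C^k$ diffeomorphism and the union is nested, so this is an injectively immersed $m$-manifold. Embeddedness comes from the forward-invariance condition: near any point, an orbit staying in $\mathcal U_c(\delta)$ enters the $\eta$-ball after a uniformly bounded time, by the uniform constants in \Cref{prop:outgoing zero-energy solutions locally exponentially converge}. Thus locally the set is $\Phi^{-T}$ of an open piece of the local graph.
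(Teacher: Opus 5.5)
Your proposal is correct and follows essentially the paper's route: a Lyapunov--Perron graph over $E_c^{ss}$ in the chart $(\rho,\xi,W)$ with vanishing derivative at the origin, identification of strong stable tails with this graph via the uniform decay of \Cref{prop:outgoing zero-energy solutions locally exponentially converge}, and a uniform entry time $T_*$ giving $W_{loc}^{ss}(e_c^+;\delta)=\mathcal O_{T_*}(\delta)\cap\Phi^{-T_*}(\mathcal U_R^{ss}\cap\mathcal D_{T_*}^-)$, with tangent space $e^{-AT_*}E_c^{ss}=E_c^{ss}$ at $e_c^+$. The differences are cosmetic: you use a global cutoff where the paper works in a small ball $B_R^\beta$, and your gap condition $k\beta<v_*/2$ is not needed for a strong stable manifold (the paper takes $\beta\in(v_*/4,\gamma_\delta)$, a range it later relies on in \Cref{lem:stable-sheet-tangent-decay}, and obtains $C^k$ regularity directly from the implicit function theorem in $\mathscr X_\beta$).
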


Recall the linearization near $e_c^+$, that for any $s$ near $c$, $s = \exp_c\xi$, $\xi\in T_c\mathcal{E}$,
$P_\xi:T_c\mathcal{E} \to T_s\mathcal{E}$ the parallel transport along $\theta\mapsto \exp_c(\theta \xi)$,
$w=P_\xi W$, $W \in T_c\mathcal{E}$, $\|w\| =\|W\|$, and remember $v = V(s,w)$.
Then we have the local coordinates chart in a open neighborhood of $e_c^+$ in $\mathcal{P}$: 
\begin{align*}
            \chi:\mathcal{O}&\longrightarrow  \mathscr{T}_0\quad\\
            (\rho,s,v,w)&\longmapsto Y=(\rho,\xi,W).
\end{align*}
In particular, $\chi$ is $C^k$ and $\chi(e_c^+) = 0$. 
To facilitate the use of the new chart, we assum that all configurations corresponding to $\mathcal{O}$ belong to $\mathcal{E}_0$.
We further assume throughout that $\delta_0\in(0,v_*/2)$ is small enough
so that $\mathcal{U}_c(\delta_0)$ lies within the coordinate chart $\chi$.

 Let $f:\chi(\mathcal{O})\subset  \mathscr{T}_0 \rightarrow  \mathscr{T}_0 $ be our McGehee vector field expressed in the
  coordinates
  $\chi$:
 \begin{equation}\label{eq:McGehee system under coordinates chi}
    Y' = f(Y).
 \end{equation}
Here
$f$ is the pushforward of the original vector field $\mathfrak{X}$ by the coordinate map $\chi$:
$f = \chi_* \mathfrak{X}$;
more precisely,
\begin{equation}\label{eq:f is the pushforward}
    f(Y) = d\chi|_{\chi^{-1}(Y)} \, \mathfrak{X}\bigl(\chi^{-1}(Y)\bigr).
\end{equation}
Denote $N(Y) :=f(Y) -AY$ and the open ball $B_R\subset  \mathscr{T}_0 $ centered at $0\in\mathscr{T}_0$ with radius $R$.
Unless stated otherwise in our paper, we always assume that $R$ is such that the ball $\overline{B_R}$ is contained in the $\chi(\mathcal{O})$.

Before proceeding to the next subsection, we list some estimates that will be needed.
Remember \Cref{prop:Linear space decomposition/spectral splitting} gives us $ \mathscr{T}_0  = E_c^{ss}\oplus E_c^{cu}$.
Denote by $P_{ss}$ and $P_{cu}$  the  linear projections associated with $E_c^{ss}$ and $E_c^{cu}$. Let $\eta_*:=\dfrac{v_*}{2}$,
all negative eigenvalues are bounded above by $-\eta_*$.
The subspaces $E_c^{ss}$ and $E_c^{cu}$ are the direct sums of all eigenspaces
corresponding to the negative and non-negative eigenvalues of $A$, respectively. Hence $E_c^{ss}$ and $E_c^{cu}$ are 
invariant under $A$. 
Consequently, the operator $A$ and the projections satisfy some useful properties.

First, there exists a constant $M \geq 0$ such that
\begin{equation*}
    \|\mathrm{e}^{A\tau} P_{ss}\| \leq M \mathrm{e}^{-\eta_* \tau}, \qquad
    \|\mathrm{e}^{-A\tau} P_{cu}\| \leq M, \qquad \tau \geq 0,
\end{equation*}
where
\begin{equation*}
    \|\mathrm{e}^{A\tau} P_{ss}\| = \sup_{\|Z\|=1} \|\mathrm{e}^{A\tau} P_{ss} Z\|, \qquad
    \|\mathrm{e}^{-A\tau} P_{cu}\| = \sup_{\|Z\|=1} \|\mathrm{e}^{-A\tau} P_{cu} Z\|.
\end{equation*}
From the exponential series
\[
\mathrm{e}^{A\tau} = I + \tau A + \frac{\tau^2 A^2}{2!} + \cdots,
\]
we have
\[
\mathrm{e}^{A\tau} E_c^{ss} \subset E_c^{ss}\quad\text{and}\quad \mathrm{e}^{A\tau} E_c^{cu} \subset E_c^{cu}
\]
for all $\tau \in \mathbb{R}$, and hence
\begin{equation*}
    \mathrm{e}^{A\tau} E_c^{ss} = E_c^{ss}\quad\text{and}\quad \mathrm{e}^{A\tau} E_c^{cu} = E_c^{cu}.
\end{equation*}
Finally, since $\mathscr{T}_0 = E_c^{ss} \oplus E_c^{cu}$ and both $E_c^{ss}$ and $E_c^{cu}$ are $A$-invariant,
both $A$ and $\mathrm{e}^{A\tau}$ commute with the projection $P_{ss}$  and $P_{cu}$.

For $1\leq n \leq k+1$, we  wirte two notations:
\begin{equation*}
C_n^R = \sup_{\|Z\|\leq R}\|D^nN(Z)\|<+\infty.
\end{equation*}
The following statements hold.
 \begin{lemma}\label{lem:f N are smooth}
\begin{enumerate}[label=\textnormal{(\roman*)}]
    \item $f$ and $N$ are smooth on $\chi(\mathcal{O})$;
    \item $f(0) = 0$, $A = Df(0)$, $N(0) = 0$, $DN(0) = 0$;
    \item For any $Y_1,Y_2\in  \overline{B}_R $,
    \begin{equation*}
        \|N(Y_1) - N(Y_2)\|_{ \mathscr{T}_0 }\leq C_1^R\|Y_1-Y_2\|_{ \mathscr{T}_0 }.
    \end{equation*}
\end{enumerate}
\end{lemma}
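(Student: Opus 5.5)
The plan is to verify the three items in order, using only that $\chi$ is a smooth chart, that $\mathfrak X$ is smooth near $e_c^+$, and the linearization computed in \Cref{subscn:Linearization at ec+}.

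\emph{Item (i).} On the chart domain the configuration stays in $\mathcal E_0$, so $U$ is $C^\infty$ there. The exponential map and parallel transport $P_\xi$ are smooth in $\xi$ for small $\xi$. The function $V(s,w)=\sqrt{2U(s)-\|w\|^2}$ is smooth wherever the radicand is positive, and near $e_c^+$ the radicand is close to $v_*^2>0$. Hence $\mathfrak X$, $\chi$ and $\chi^{-1}$ are smooth. Then $f=d\chi\circ\mathfrak X\circ\chi^{-1}$, as in \eqref{eq:f is the pushforward}, is smooth, and so is $N=f-A$, since $A$ is linear.

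\emph{Item (ii).} By \Cref{prop:ea+ is a equilibrium}, $\mathfrak X(e_c^+)=0$, and $\chi(e_c^+)=0$, so $f(0)=0$ and therefore $N(0)=0$. For $Df(0)=A$, I would use the expansions \eqref{eq:Taylor formula of M1}, \eqref{eq:Taylor formula of M2} and \eqref{eq:Taylor formula of M4}. Each shows that $f(Y)-AY$ has components of order $O(\|Y\|^2)$: the terms are $\rho\cdot O(\|\xi\|^2+\|W\|^2)$, then $O(\|\xi\|\|W\|)$, then $O(\|\xi\|^2+\|W\|^2)$. Because $N$ is $C^1$ and $N(Y)=O(\|Y\|^2)$, we get $DN(0)=0$, and hence $Df(0)=A$.

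The only subtle point is making sure these $O$-terms really are quadratic uniformly. In particular, the connection term $(\nabla P^{-1})_\xi[\xi']w$ must vanish to second order. It is bilinear in $(\xi',w)$ with coefficients smooth in $\xi$, and both $\xi'$ and $w$ are $O(\|Y\|)$, so it is quadratic as required.

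\emph{Item (iii).} This is the mean value inequality on the convex set $\overline{B}_R\subset\chi(\mathcal O)$. For $Y_1,Y_2\in\overline B_R$,
\[
N(Y_1)-N(Y_2)=\int_0^1 DN\bigl(Y_2+\theta(Y_1-Y_2)\bigr)(Y_1-Y_2)\,d\theta .
\]
The segment stays in the ball, so the norm is at most $C_1^R\|Y_1-Y_2\|$. The constant $C_1^R$ is finite because $DN$ is continuous on the compact set $\overline B_R$.

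None of these steps poses a real obstacle; the main care needed is the uniformity of the quadratic remainders in item (ii).
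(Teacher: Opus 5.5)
Your proposal is correct and follows essentially the same route as the paper: smoothness by composing smooth maps, $f(0)=N(0)=0$ from the equilibrium, and the mean-value integral over the convex ball $\overline{B}_R$ for (iii). The only difference is that the paper treats $Df(0)=A$ as given, since $A$ was defined as the linear part in the linearization section. You instead derive it explicitly from the quadratic remainders in \eqref{eq:Taylor formula of M1}, \eqref{eq:Taylor formula of M2} and \eqref{eq:Taylor formula of M4}, which is a harmless and slightly more careful justification.
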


\begin{proof}
Smoothness follows from the smoothness of the Newtonian vector field on the
collision-free chart and from the smooth dependence of $\exp_c$ and radial
parallel transport on $\xi$.  Since $e_c^+$ is an equilibrium and $A=Df(0)$,
we have
\[
f(0)=0,\qquad N(0)=0,\qquad DN(0)=Df(0)-A=0.
\]
Finally, the segment joining two points of $\overline{B_R}$ remains in that
ball.  Hence 
\[
N(Y_1)-N(Y_2)
=\int_0^1 DN\bigl(Y_2+t(Y_1-Y_2)\bigr)[Y_1-Y_2] \,dt,
\]
and therefore
\[
\|N(Y_1)-N(Y_2)\|_{ \mathscr{T}_0 }
\leq C_1^R\|Y_1-Y_2\|_{ \mathscr{T}_0 }.
\]
\end{proof}
\subsection{The Lyapunov-Perron operator}

We make full use of the Lyapunov-Perron operator, which links the strong stable manifold with the solutions of 
McGehee system. Write the operator as LP for short. 
By solving the equation via the Lyapunov--Perron operator, we obtain a $C^k$ submanifold in $\mathscr{T}_0$,
which will serve as the counterpart of the submanifold of $\mathcal{P}$ constructed in the next subsection.

Let $\gamma_\delta:=\min\left\{\dfrac{v_* - \delta}{2}, \dfrac{2v_*-\delta}{6}\right\}$,
then $v_*/4<\gamma_\delta<\eta_*$ and $\gamma_\delta$ is no more than any of the exponential decay rates that appear 
in \Cref{prop:outgoing zero-energy solutions locally exponentially converge}. 
Fix 
\[
\beta\in(v_*/4,\gamma_\delta).
\]
Define
\begin{equation*}
    \mathscr{X}_{\beta} = \left\{ Y\in C^0\big([0,+\infty), \mathscr{T}_0\big):\|Y\|_\beta: = \sup_{\tau\geq0}\mathrm{e}^{\beta\tau} \|Y(\tau)\|_{\mathscr{T}_0} < +\infty \right\}.
\end{equation*} 
\begin{lemma}\label{lem:X_beta is Banach}
    The normed space $(\mathscr{X}_\beta, \|\cdot\|_\beta)$ is a Banach space.
\end{lemma}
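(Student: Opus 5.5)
The plan is to check the norm axioms and then prove completeness in the usual way, by treating $\mathscr{X}_\beta$ as an isometric copy of the bounded continuous functions. The axioms are direct. Finiteness, homogeneity and the triangle inequality hold pointwise in $\tau$ and survive the supremum, since $\mathrm{e}^{\beta\tau}\|Y_1(\tau)+Y_2(\tau)\|\le \mathrm{e}^{\beta\tau}\|Y_1(\tau)\|+\mathrm{e}^{\beta\tau}\|Y_2(\tau)\|\le\|Y_1\|_\beta+\|Y_2\|_\beta$. Definiteness follows because $\|Y\|_\beta=0$ forces $Y(\tau)=0$ for every $\tau$. Here $\mathscr{T}_0$ is finite-dimensional with a fixed norm, so it is itself complete.

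For completeness I will use the map $J:Y\mapsto \tilde Y$, where $\tilde Y(\tau)=\mathrm{e}^{\beta\tau}Y(\tau)$. This is a linear isometric bijection from $(\mathscr{X}_\beta,\|\cdot\|_\beta)$ onto the space $C_b([0,+\infty),\mathscr{T}_0)$ of bounded continuous functions with the sup norm. Its inverse is multiplication by $\mathrm{e}^{-\beta\tau}$, which preserves continuity. Completeness of $\mathscr{X}_\beta$ then reduces to completeness of $C_b$, and I will prove that directly.

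Let $(\tilde Y_n)$ be Cauchy in the sup norm. For each fixed $\tau$, the sequence $(\tilde Y_n(\tau))$ is Cauchy in $\mathscr{T}_0$, so it converges to some limit $\tilde Y(\tau)$. Given $\epsilon>0$, choose $N$ with $\sup_\tau\|\tilde Y_n(\tau)-\tilde Y_k(\tau)\|<\epsilon$ for all $n,k\ge N$. Letting $k\to\infty$ pointwise gives $\sup_\tau\|\tilde Y_n(\tau)-\tilde Y(\tau)\|\le\epsilon$ for $n\ge N$, so the convergence is uniform. A uniform limit of continuous functions is continuous, and $\tilde Y$ is bounded because $\|\tilde Y\|_\infty\le\|\tilde Y_N\|_\infty+\epsilon$. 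Hence $\tilde Y\in C_b$ and $\tilde Y_n\to\tilde Y$ in the sup norm. Pulling back by $J^{-1}$ gives a limit $Y\in\mathscr{X}_\beta$ with $\|Y_n-Y\|_\beta\to0$.

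The only point needing care is that the limit keeps the weighted bound and stays continuous. Uniformity of the pointwise convergence handles both at once, so I expect no real obstacle.
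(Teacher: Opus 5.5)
Your proof is correct and is essentially the paper's argument. The paper works directly in $\mathscr{X}_\beta$: it takes the pointwise limit in the finite-dimensional $\mathscr{T}_0$, lets $m\to\infty$ in $\|Y_n-Y_m\|_\beta$, and gets continuity from the resulting uniform convergence. You first transport to $C_b([0,+\infty),\mathscr{T}_0)$ via the isometry $Y\mapsto \mathrm{e}^{\beta\tau}Y$, which repackages the same steps (and you additionally verify the norm axioms, which the paper omits).
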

\begin{proof}
    For any Cauchy sequence $\{Y_n\}$ in $\mathscr{X}_\beta$, $\mathrm{e}^{\beta\tau} \|Y_n(\tau) - Y_m(\tau)\|_{\mathscr{T}_0}\leq \|Y_n-Y_m\|_\beta$,
hence for $\tau\geq0$, $\{Y_n(\tau)\}$ is a Cauchy sequence in the finite dimensional $\mathscr{T}_0$, then there is a unique $Y(\tau)\in \mathscr{T}_0$ such that 
$Y_n(\tau)\to Y(\tau) $. On the other hand, $\{Y_n\}$ is bounded in $\mathscr{X}_\beta$, hence there is some constant $C$, 
 $\mathrm{e}^{\beta\tau}\|Y_n(\tau)\|_{\mathscr{T}_0} \leq C$ holds for any $\tau\geq0$ and so is $\mathrm{e}^{\beta\tau}\|Y(\tau)\|_{\mathscr{T}_0}$.
 Let $m\to\infty$ in $\|Y_n - Y_m\|_\beta$ to have that $Y_n\to Y$ in the norm $\|\cdot\|_\beta$.
 Notice that $\|Y_n(\tau) - Y(\tau)\|_{\mathscr{T}_0} = \mathrm{e}^{-\beta\tau} \mathrm{e}^{\beta\tau} \|Y_n(\tau) - Y(\tau)\|_{\mathscr{T}_0} \leq \|Y_n - Y\|_\beta$,
 naming that $Y_n\to Y$ uniformly in $[0,+\infty)$, then $Y\in C^0([0,+\infty),\mathscr{T}_0)$. Therefore $Y\in\mathscr{X}_\beta$ and $\mathscr{X}_\beta$ is complet.
\end{proof}

For $R>0$, let
\(
B_R^\beta:=\{Y\in\mathscr X_\beta:\|Y\|_\beta<R\}
\).
The closed ball $\overline B_R^\beta$ is complete.  We choose $R$ so
small that $\overline B_R\subset\chi(\mathcal O)$.  Then, for every
$Y\in\overline B_R^\beta$ and every $\tau\geq0$,
\[
\|Y(\tau)\|\leq R e^{-\beta\tau}\leq R,
\]
so that $Y(\tau)\in\overline B_R$.

Given any $a\in E_c^{ss}$, the Lyapunov--Perron operator used in this paper is defined as follows:
 \begin{equation*}
   \begin{aligned}
     \mathcal{T}_a: B_R^\beta &\rightarrow \mathscr{X}_\beta\\
     [\mathcal{T}_a(Y)](\tau) = \mathrm{e}^{A\tau}a  &+ \int_0^\tau \mathrm{e}^{A(\tau-\sigma)} P_{ss} N(Y(\sigma)) d\sigma\\
                                                   &-\int_{\tau}^{+\infty} \mathrm{e}^{A(\tau-\sigma)} P_{cu} N(Y(\sigma))d\sigma.
   \end{aligned}
 \end{equation*}
The integrals converges  because $ \| \mathrm{e}^{-A(\sigma - \tau)} P_{cu}\|\leq M$ and
by \Cref{lem:f N are smooth}, $N(0)=0$ and 
\(
\|N(Y(\sigma))\|
\leq C_1^R\|Y(\sigma)\|
\leq C_1^R R e^{-\beta\sigma}.
\)
Write 
\begin{equation*}
     q(n,R,\beta) = MC_n^R\left(\frac{1}{\eta_* - \beta} + \frac{1}{n\beta}\right)>0.
\end{equation*}
In particular, since $DN(0) = 0$,  
$$\lim_{R\to 0}q(1,R,\beta) =0.$$
 The following statement holds.
\begin{lemma}\label{lem:estimate of D^n_YT_a(Y)}
    For $Y\in B_R^\beta$ and $1\leq n\leq k+1$, the derivative
$D_Y^n\mathcal T_a(Y)$ is the bounded $n$-linear operator given by, for any $u_1,\dots,u_n\in\mathscr{X}_\beta$,

\begin{equation*}
\begin{aligned}
        D^n_Y\mathcal{T}_a(Y)[u_1,\dots,u_n](\tau) &=\int_0^\tau \mathrm{e}^{A(\tau - \sigma)} P_{ss} D^nN(Y(\sigma)) [u_1(\sigma),\dots,u_n(\sigma)]d\sigma\\
        &-\int_{\tau}^{+\infty} \mathrm{e}^{A(\tau - \sigma)} P_{cu} D^nN(Y(\sigma)) [u_1(\sigma),\dots,u_n(\sigma)]d\sigma,
        \end{aligned}
\end{equation*}
 and
\begin{equation*}
    \|D^n_Y\mathcal{T}_a(Y)\|\leq q(n,R,\beta).
\end{equation*}
Consequently, $(a,Y)\mapsto\mathcal T_a(Y)$ is $C^k$ on
$E_c^{ss}\times B_R^\beta$.
\end{lemma}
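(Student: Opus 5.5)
The plan is to prove the derivative formula and the bound by induction on $n$, and to read off the $C^k$ regularity from the same argument. The only facts needed are the exponential dichotomy estimates $\|e^{A\tau}P_{ss}\|\le Me^{-\eta_*\tau}$ and $\|e^{-A\tau}P_{cu}\|\le M$ for $\tau\ge0$, the bounds $C_n^R=\sup_{\overline B_R}\|D^nN\|$, and the fact that every $Y\in B_R^\beta$ takes values in $\overline B_R$.

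The first step handles the term $e^{A\tau}a$. It is affine in $a$ and independent of $Y$, so all its $Y$-derivatives vanish. It lies in $\mathscr X_\beta$ because $\|e^{A\tau}a\|\le Me^{-\eta_*\tau}\|a\|$ and $\beta<\eta_*$. Hence only the two integral terms, viewed as a Nemytskii-type composition, need to be differentiated.

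The core estimate is a weighted bound on the candidate $n$-linear map. Let $\mathcal L_n(Y)$ denote the right-hand side of the formula. For $u_i\in\mathscr X_\beta$, the pointwise bound $\|D^nN(Y(\sigma))[u_1(\sigma),\dots,u_n(\sigma)]\|\le C_n^R e^{-n\beta\sigma}\prod\|u_i\|_\beta$ holds. The stable integral is bounded by
\[
MC_n^R\prod\|u_i\|_\beta\int_0^\tau e^{-\eta_*(\tau-\sigma)}e^{-n\beta\sigma}\,d\sigma .
\]
Since $n\beta\ge\beta$, this is at most $MC_n^R e^{-\beta\tau}/(\eta_*-\beta)$, using $e^{-n\beta\sigma}\le e^{-\beta\sigma}$. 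The center–unstable integral is bounded by
\[
MC_n^R\prod\|u_i\|_\beta\int_\tau^\infty e^{-n\beta\sigma}\,d\sigma
= MC_n^R\prod\|u_i\|_\beta\,\frac{e^{-n\beta\tau}}{n\beta}
\le MC_n^R\prod\|u_i\|_\beta\,\frac{e^{-\beta\tau}}{n\beta}.
\]
Multiplying by $e^{\beta\tau}$ gives $\|\mathcal L_n(Y)\|\le q(n,R,\beta)$. Continuity of $\tau\mapsto\mathcal L_n(Y)[u](\tau)$ follows from dominated convergence, so $\mathcal L_n(Y)$ is indeed a bounded $n$-linear map into $\mathscr X_\beta$.

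The main obstacle is showing that $\mathcal L_n$ is actually the Fréchet derivative, rather than only a Gâteaux derivative. Nemytskii operators on weighted sup spaces lose a derivative unless there is uniformity, and here we have it: $D^{n+1}N$ is bounded on $\overline B_R$ for $n\le k$, since $C_{n+1}^R<\infty$ when $n+1\le k+1$. Taylor's formula with integral remainder gives
\[
\|D^nN(Y+h)-D^nN(Y)-D^{n+1}N(Y)h\|\le \tfrac12\sup\|D^{n+2}N\|\,\|h\|^2
\]
pointwise. At the top order, where $D^{n+2}N$ is not available, I would instead use the uniform continuity of $D^{n+1}N$ on the compact set $\overline B_R$. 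The same convolution estimates as above then give
\[
\|\mathcal L_n(Y+h)-\mathcal L_n(Y)-\mathcal L_{n+1}(Y)[h,\cdot]\|=o(\|h\|_\beta).
\]
The weights work because $\|h(\sigma)\|\le e^{-\beta\sigma}\|h\|_\beta$ contributes an extra decaying factor, which only helps in both integrals. Two small technical points need attention. First, the factor $\tfrac{1}{n\beta}$ should be computed with $e^{-n\beta\sigma}$ from the $u_i$'s; the extra $h$-factor can be absorbed using $e^{-\beta\sigma}\le1$. Second, $Y+h$ must stay in $B_R^\beta$ for small $h$, which holds because $B_R^\beta$ is open.

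By induction this yields $D^n_Y\mathcal T_a=\mathcal L_n$ for $n\le k$, together with the stated bound for $n\le k+1$, and continuity of $Y\mapsto D^k_Y\mathcal T_a(Y)$ from the uniform-continuity argument. Since $\mathcal T_a(Y)$ is the sum of a bounded linear function of $a$ and a $C^k$ function of $Y$ alone, joint $C^k$ regularity on $E_c^{ss}\times B_R^\beta$ follows.
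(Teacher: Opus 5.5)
Your proposal is correct and follows the same route as the paper. You bound the candidate $n$-linear map using the dichotomy estimates $\|e^{A\tau}P_{ss}\|\le Me^{-\eta_*\tau}$ and $\|e^{-A\tau}P_{cu}\|\le M$, which gives exactly $q(n,R,\beta)$, and you identify that map with $D^n_Y\mathcal T_a$ by differentiating under the two integrals. Where the paper simply invokes dominated convergence, you supply the uniform Taylor-remainder and uniform-continuity estimate that makes the differentiation Fr\'echet in $\mathscr X_\beta$. Note also that your step at $n=k$ already yields the formula for $D^{k+1}_Y\mathcal T_a$, so your induction covers all $n\le k+1$, not only $n\le k$ as your summary states.
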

\begin{proof}
    Differentiation under the two integrals is justified by dominated convergence.
Every $Y\in B_R^\beta$ takes values in $B_R$ and the derivatives of
$N$ are bounded there.  Remember that $\|\mathrm{e}^{A\tau} P_{ss}\|\leq M\mathrm{e}^{-\eta_*\tau}$ and $\|\mathrm{e}^{-A\tau} P_{cu}\|\leq M$.
For $u_j\in\mathscr X_\beta$, 
\[
\|u_j(\sigma)\|\leq e^{-\beta\sigma}\|u_j\|_\beta.
\]
Consequently, the stable integral is bounded by
\begin{equation*}
  \begin{aligned}
         &\mathrm{e}^{\beta\tau} \left\| \int_0^\tau \mathrm{e}^{A(\tau-\sigma)} P_{ss} D^nN(Y(\sigma)) [u_1(\sigma),\dots,u_n(\sigma)]  d\sigma \right\|_{\mathscr{T}_0}\\
         \leq & \int_{0}^{\tau} \left\|\mathrm{e}^{A(\tau - \sigma)}  P_{ss} e^{\beta\tau} D^nN(Y(\sigma)) [u_1(\sigma),\dots,u_n(\sigma)] \right\|_{\mathscr{T}_0} d\sigma\\
       \leq & MC_n^R\Pi_{i=1}^n \|u_i\|_\beta \int_0^\tau \mathrm{e}^{\beta\tau} \mathrm{e}^{-\eta_*(\tau - \sigma)} \mathrm{e}^{-n\beta\sigma}  d\sigma\\
      = &MC_n^R\Pi_{i=1}^n \|u_i\|_\beta \int_0^\tau  \mathrm{e}^{-(\eta_*-\beta)(\tau - \sigma)} \mathrm{e}^{-(n-1)\beta\sigma}  d\sigma\\
    \leq&\frac{MC_n^R}{\eta_* - \beta}\Pi_{i=1}^n \|u_i\|_\beta. 
  \end{aligned}
\end{equation*}
For the center-unstable integral,
\begin{equation*}
\begin{aligned}
         &\mathrm{e}^{\beta\tau} \left\| \int_{\tau}^{+\infty} \mathrm{e}^{A(\tau-\sigma)} P_{cu} D^nN(Y(\sigma)) [u_1(\sigma),\dots,u_n(\sigma)]  d\sigma \right\|_{\mathscr{T}_0}\\
\leq & MC_n^R \Pi_{i=1}^n \|u_i\|_\beta \mathrm{e}^{\beta\tau} \int_{\tau}^{+\infty}  \mathrm{e}^{-n\beta\sigma} d\sigma\\
\leq &\frac{MC_n^R}{n\beta} \mathrm{e}^{-(n-1)\beta\tau} \Pi_{i=1}^n \|u_i\|_\beta\\
\le & \frac{MC_n^R}{n\beta}  \Pi_{i=1}^n \|u_i\|_\beta.
\end{aligned}
\end{equation*}

Taking the supremum in $\tau$ proves the claimed bound.  The same dominated
convergence estimates, applied locally in $(a,Y)$, give the asserted
$C^k$ dependence.

\end{proof}

Put
\(\displaystyle
\varrho(\beta,R)
:=\frac{R}{M}\bigl(1-q(1,R,\beta)\bigr).
\)
Choose $R>0$ so small that $q(1,R,\beta)<1$ and 
$B_{\varrho(\beta,R)}\subset E_c^{ss}$ is contained in
$\chi(\mathcal O)$.
\begin{proposition}\label{prop:T_a is a contraction}
For every $a\in B_{\varrho(\beta,R)}$, the map
\[
\mathcal T_a:\overline B_R^\beta\longrightarrow\overline B_R^\beta
\]
is a contraction with contraction constant $q(1,R,\beta)$.  In fact,
$\mathcal T_a(\overline B_R^\beta)\subset B_R^\beta$.
\end{proposition}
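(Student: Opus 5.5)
The plan is the standard contraction-mapping argument, built from the two estimates already available: the linear bound $\|\mathrm e^{A\tau}P_{ss}\|\le M\mathrm e^{-\eta_*\tau}$ and the derivative bound of \Cref{lem:estimate of D^n_YT_a(Y)} with $n=1$.

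\textbf{Contraction estimate.} Fix $a\in B_{\varrho(\beta,R)}$ and take $Y_1,Y_2\in\overline B_R^\beta$. The ball is convex, and \Cref{lem:estimate of D^n_YT_a(Y)} gives $\|D_Y\mathcal T_a(Y)\|\le q(1,R,\beta)$ for $Y\in B_R^\beta$. I would not use the mean value inequality on the open ball and then pass to the closure. It is cleaner to repeat the lemma's integral estimates directly with the Lipschitz bound of \Cref{lem:f N are smooth}(iii). For every $\sigma$ one has $Y_i(\sigma)\in\overline B_R$, hence
\[
\|N(Y_1(\sigma))-N(Y_2(\sigma))\|\le C_1^R\,\mathrm e^{-\beta\sigma}\|Y_1-Y_2\|_\beta .
\]
The $\mathrm e^{A\tau}a$ terms cancel in $\mathcal T_a(Y_1)-\mathcal T_a(Y_2)$. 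The stable integral is then bounded by $\frac{MC_1^R}{\eta_*-\beta}\|Y_1-Y_2\|_\beta$ and the center-unstable integral by $\frac{MC_1^R}{\beta}\|Y_1-Y_2\|_\beta$, exactly as in the lemma's computation with $n=1$. Together these give
\[
\|\mathcal T_a(Y_1)-\mathcal T_a(Y_2)\|_\beta\le q(1,R,\beta)\|Y_1-Y_2\|_\beta .
\]

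\textbf{Self-map with strict inclusion.} Because $N(0)=0$, we have $\mathcal T_a(0)(\tau)=\mathrm e^{A\tau}a$. Since $a\in E_c^{ss}$, $P_{ss}a=a$, so
\[
\mathrm e^{\beta\tau}\|\mathrm e^{A\tau}a\|\le M\mathrm e^{-(\eta_*-\beta)\tau}\|a\|\le M\|a\| .
\]
Therefore $\|\mathcal T_a(0)\|_\beta\le M\|a\|<M\varrho(\beta,R)=R(1-q)$, where $q=q(1,R,\beta)$. For $Y\in\overline B_R^\beta$,
\[
\|\mathcal T_a(Y)\|_\beta\le\|\mathcal T_a(0)\|_\beta+q\|Y\|_\beta<R(1-q)+qR=R .
\]
This shows $\mathcal T_a(\overline B_R^\beta)\subset B_R^\beta$. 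The strictness comes from $\|a\|<\varrho$.

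\textbf{Remaining checks.} I still need $\mathcal T_a(Y)\in\mathscr X_\beta$, meaning it is continuous in $\tau$. Both integrals depend continuously on $\tau$, since their integrands are dominated by integrable exponentials, as already noted when the operator was defined. The only delicate point is the $\beta$-weighted bound on the center-unstable integral. It relies on $\|\mathrm e^{-A\tau}P_{cu}\|\le M$, which is valid because $E_c^{cu}$ has no Jordan block at $0$ (\Cref{prop:Linear space decomposition/spectral splitting}) and only eigenvalues $\ge0$. With that in hand, the estimates above give the full statement.
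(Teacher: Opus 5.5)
Your proof is correct and follows the paper's argument. You use the same two weighted integral bounds, $\frac{MC_1^R}{\eta_*-\beta}$ and $\frac{MC_1^R}{\beta}$, to get $\|\mathcal T_a(Y)\|_\beta\le M\|a\|+q(1,R,\beta)\|Y\|_\beta<R$ and the Lipschitz constant $q(1,R,\beta)$. The only difference is that you take the contraction estimate directly from the Lipschitz bound in \Cref{lem:f N are smooth}(iii), whereas the paper uses the mean-value formula together with the derivative bound for $D_Y\mathcal T_a$. Your route has the small advantage of applying on the closed ball $\overline B_R^\beta$ as it stands, while that derivative bound is only stated on $B_R^\beta$.
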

\begin{proof}

For $Y\in\overline B_R^\beta$, the stable integral satisfies
\begin{equation*}
\begin{aligned}
            &\mathrm{e}^{\beta\tau} \left\| \int_0^\tau\mathrm{e}^{A(\tau-\sigma)} P_{ss} N(Y(\sigma))d\sigma  \right\|_{\mathscr{T}_0}\\
       \leq &\mathrm{e}^{\beta\tau}\int_0^\tau M\mathrm{e}^{-\eta_*(\tau-\sigma)} \|N(Y(\sigma))\|_{\mathscr{T}_0}d\sigma\\
       \leq &\mathrm{e}^{\beta\tau}\int_0^\tau M\mathrm{e}^{-\eta_*(\tau-\sigma)} C_1^R \|Y(\sigma)\|_{\mathscr{T}_0}d\sigma\\
        =   &MC_1^R \int_0^\tau \mathrm{e}^{-\eta_*(\tau-\sigma)} \mathrm{e}^{\beta(\tau - \sigma)} \mathrm{e}^{\beta\sigma} \| Y(\sigma) \|_{\mathscr{T}_0} d\sigma\\
        \le & MC_1^R \|Y\|_\beta\int_0^\tau \mathrm{e}^{-(\eta_*-\beta)(\tau-\sigma)}  d\sigma\\
        \leq&\frac{MC_1^R}{\eta_* - \beta}\|Y\|_\beta.
\end{aligned} 
\end{equation*}
Similarly, the center–unstable integral satisfies
\begin{equation*}
    \begin{aligned}
        &\mathrm{e}^{\beta\tau} \left\| \int_{\tau}^{+\infty} \mathrm{e}^{A(\tau-\sigma)} P_{cu} N(Y(\sigma)) d\sigma \right\|_{\mathscr{T}_0}\\
   \leq &\mathrm{e}^{\beta\tau} \int_{\tau}^{+\infty} M \left\|N(Y(\sigma))\right\|_{\mathscr{T}_0}d\sigma\\
   \leq &M\mathrm{e}^{\beta\tau}\int_{\tau}^{+\infty} C_1^R \left\|Y(\sigma)\right\|_{\mathscr{T}_0}d\sigma\\
   \leq &M C_1^R\mathrm{e}^{\beta\tau}\int_{\tau}^{+\infty} \mathrm{e}^{-\beta\sigma} \mathrm{e}^{\beta\sigma}\left\|Y(\sigma)\right\|_{\mathscr{T}_0}d\sigma\\
   \leq &M C_1^R\|Y\|_\beta \mathrm{e}^{\beta\tau} \int_{\tau}^{+\infty} \mathrm{e}^{-\beta\sigma} d\sigma\\
    =   &\frac{MC_1^R}{\beta}\|Y\|_\beta.
\end{aligned}
\end{equation*}
Therefore 

\begin{equation*}
    \begin{aligned}
        &\mathrm{e}^{\beta\tau} \left\| [\mathcal{T}_a(Y)](\tau)\right\|_{\mathscr{T}_0} \\
    \leq &\|\mathrm{e}^{\beta\tau}\mathrm{e}^{A\tau}a\|  + 
        \mathrm{e}^{\beta\tau} \left\| \int_0^\tau\mathrm{e}^{A(\tau-\sigma)} P_{ss} N(Y(\sigma))d\sigma  \right\|_{\mathscr{T}_0}\\
        \qquad &+\mathrm{e}^{\beta\tau} \left\| \int_{\tau}^{+\infty} \mathrm{e}^{A(\tau-\sigma)} P_{cu} N(Y(\sigma)) d\sigma \right\|_{\mathscr{T}_0}\\
    \leq &M\mathrm{e}^{\beta\tau} \mathrm{e}^{-\eta_*\tau}\|a\|_{\mathscr{T}_0} + q(1,R,\beta)\|Y\|_\beta,
    \end{aligned}
\end{equation*}
hence  
\begin{equation}\label{eq:|TaY|<= M|a| + q|Y|}
    \|\mathcal{T}_a(Y)\|_\beta \leq M\|a\|_{\mathscr{T}_0} + q(1,R,\beta)\|Y\|_\beta.
\end{equation}
Because $q(1,R,\beta)<1$, $\|a\|<\varrho(\beta,R)$ and $\|Y\|_\beta\leq R$, the
right-hand side is strictly smaller than $R$.
 Moreover, by the mean-value
formula in the Banach space $\mathscr X_\beta$ and the preceding derivative
estimate in \Cref{lem:estimate of D^n_YT_a(Y)},
\[
\|\mathcal T_a(Y_1)-\mathcal T_a(Y_2)\|_\beta
\leq q(1,R,\beta)\|Y_1-Y_2\|_\beta
\]
for all $Y_1,Y_2\in\overline B_R^\beta$.

\end{proof}

Since $\overline B_R^\beta$ is complete, the contraction mapping theorem
gives a unique fixed point $Y_a\in\overline B_R^\beta$.  The strict
invariance just proved implies in fact that $Y_a\in B_R^\beta$.  In
particular, $Y_0=0$.

\begin{proposition}\label{prop:Y_a is a solution of the system in new coordinates}
For every $a\in B_{\varrho(\beta,R)}$, the fixed point $Y_a$ solves
\eqref{eq:McGehee system under coordinates chi} on $[0,+\infty)$ and
satisfies
\[
\|Y_a(\tau)\|\leq R e^{-\beta\tau}.
\]
\end{proposition}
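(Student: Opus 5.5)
The decay bound is immediate: $Y_a\in B_R^\beta$, so $\|Y_a\|_\beta<R$, which gives $\|Y_a(\tau)\|\le Re^{-\beta\tau}$ for every $\tau\ge0$. The real task is to show that a continuous fixed point of $\mathcal T_a$ is a $C^1$ solution of $Y'=f(Y)=AY+N(Y)$. I will do this by differentiating the integral identity
\[
Y_a(\tau)=e^{A\tau}a+\int_0^\tau e^{A(\tau-\sigma)}P_{ss}N(Y_a(\sigma))\,d\sigma-\int_\tau^{\infty}e^{A(\tau-\sigma)}P_{cu}N(Y_a(\sigma))\,d\sigma
\]
directly in $\tau$.

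First I check that each term is $C^1$ in $\tau$. The function $\sigma\mapsto N(Y_a(\sigma))$ is continuous, because $N$ is smooth on $\overline B_R$ (\Cref{lem:f N are smooth}) and $Y_a$ is continuous with values in $\overline B_R$. The first term has derivative $Ae^{A\tau}a$. For the second, I write it as $e^{A\tau}\int_0^\tau e^{-A\sigma}P_{ss}N\,d\sigma$ and apply the product rule and the fundamental theorem of calculus. Its derivative is $A\cdot(\text{second term})+P_{ss}N(Y_a(\tau))$.

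For the third term I write it as $e^{A\tau}\int_\tau^\infty e^{-A\sigma}P_{cu}N(Y_a(\sigma))\,d\sigma$. The integral converges absolutely, since $\|e^{-A\sigma}P_{cu}\|\le M$ (this bound holds for all $\sigma\ge0$) and $\|N(Y_a(\sigma))\|\le C_1^R Re^{-\beta\sigma}$. Here I use that $A$ commutes with $P_{cu}$, so $e^{-A\sigma}P_{cu}=e^{-A\sigma}P_{cu}^2$ is handled by the same bound. The tail integral is therefore differentiable in $\tau$ with derivative $-e^{-A\tau}P_{cu}N(Y_a(\tau))$. The derivative of the third term, which enters with a minus sign, is then $-A\cdot(\text{third integral})+P_{cu}N(Y_a(\tau))$.

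Adding the three contributions and using $P_{ss}+P_{cu}=\mathrm{Id}$ gives
\[
Y_a'(\tau)=AY_a(\tau)+N(Y_a(\tau))=f(Y_a(\tau)).
\]
Thus $Y_a$ is a $C^1$ solution of \eqref{eq:McGehee system under coordinates chi} on $[0,\infty)$. It stays in $\overline B_R\subset\chi(\mathcal O)$, which is where $f$ is defined, so higher regularity follows by bootstrapping the equation.

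The only delicate step is justifying differentiation of the improper integral. This rests on the uniform bound for $e^{-A\sigma}P_{cu}$ together with the exponential decay of $N(Y_a)$, which make the tail integral $\int_\tau^\infty$ a $C^1$ function of its lower limit. Everything else is routine.
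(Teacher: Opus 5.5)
Your proof is correct and follows the same route as the paper. You write $Y_a=e^{A\tau}a+I_{ss}+I_{cu}$, differentiate each integral term, and read off the decay bound from $Y_a\in B_R^\beta$; your factorization $e^{A\tau}\int e^{-A\sigma}(\cdots)\,d\sigma$ spells out the step the paper summarizes as ``the preceding decay estimates justify differentiation'', and the aside about $P_{cu}^2$ is unnecessary, since $\|e^{-A\sigma}P_{cu}\|\le M$ applies directly.
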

\begin{proof}
Write the stable and center--unstable integral terms in the fixed-point
formula as $I_{ss}(\tau)$ and $I_{cu}(\tau)$, respectively. 
Hence
\begin{equation*}
    Y_a(\tau) = \mathrm{e}^{A\tau}a + I_{ss}(\tau) + I_{cu}(\tau).
\end{equation*}
The preceding
decay estimates justify differentiation and give
\[
I_{ss}'(\tau) = P_{ss}N(Y_a(\tau)) + AI_{ss}(\tau),
\qquad
I_{cu}'(\tau) = P_{cu}N(Y_a(\tau)) + AI_{cu}(\tau).
\]
Differentiating
$Y_a=e^{A\tau}a+I_{ss}+I_{cu}$ therefore yields
\[
Y_a'=AY_a+N(Y_a)=f(Y_a).
\]
The decay estimate follows directly from
$Y_a\in\overline B_R^\beta$; in fact the strict invariance of the ball gives
$Y_a\in B_R^\beta$.
\end{proof}

Evaluating the fixed-point equation at $\tau=0$ gives
\[
P_{ss}Y_a(0)=a,
\qquad
P_{cu}Y_a(0)
=-\int_0^{+\infty}e^{-A\sigma}P_{cu}N(Y_a(\sigma))\,d\sigma.
\]
Define
\[
h(a):=P_{cu}Y_a(0)
\]
and
\[
\widetilde L(\beta,R)
:=\{a+h(a):a\in B_{\varrho(\beta,R)}\}.
\]
Hence, $Y_a(0) = a + h(a)$.
Since $Y_a(0)\in B_R\subset\chi(\mathcal O)$,
$\widetilde L(\beta,R)\subset\chi(\mathcal O)$.

\begin{lemma}\label{lem:regularity lemma for submanifold arguement for LR}
The fixed-point map
\[
a\longmapsto Y_a\in\mathscr X_\beta
\]
and the graph map $h$ are $C^k$ on $B_{\varrho(\beta,R)}$.  Moreover,
$h(0)=0$ and $Dh(0)=0$.
\end{lemma}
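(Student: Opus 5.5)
The plan is to apply the uniform contraction principle with parameters to the map
\[
\Phi:B_{\varrho(\beta,R)}\times B_R^\beta\longrightarrow\mathscr X_\beta,\qquad \Phi(a,Y)=Y-\mathcal T_a(Y).
\]
By \Cref{lem:estimate of D^n_YT_a(Y)}, $(a,Y)\mapsto\mathcal T_a(Y)$ is $C^k$. Since $a\mapsto e^{A\tau}a$ enters linearly, the partial derivative $D_a\mathcal T_a(Y)[b]=e^{A\tau}b$ is bounded from $E_c^{ss}$ into $\mathscr X_\beta$, because $\|e^{A\tau}P_{ss}\|\leq Me^{-\eta_*\tau}$ and $\beta<\eta_*$. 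The partial derivative $D_Y\Phi=I-D_Y\mathcal T_a(Y)$ is invertible on $\mathscr X_\beta$: its Neumann series converges because $\|D_Y\mathcal T_a(Y)\|\leq q(1,R,\beta)<1$.

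At the fixed point, $\Phi(a,Y_a)=0$. Combined with uniqueness of the fixed point from \Cref{prop:T_a is a contraction}, the implicit function theorem in Banach spaces shows that $a\mapsto Y_a$ agrees locally with a $C^k$ implicit solution. It is therefore $C^k$ near every $a_0\in B_{\varrho(\beta,R)}$, hence on all of $B_{\varrho(\beta,R)}$. Its derivative is
\[
DY_a[b]=\bigl(I-D_Y\mathcal T_a(Y_a)\bigr)^{-1}e^{A\cdot}b .
\]

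Next I would note that evaluation at $\tau=0$, $\mathrm{ev}_0:\mathscr X_\beta\to\mathscr T_0$, is a bounded linear map with $\|Y(0)\|\leq\|Y\|_\beta$. Since $P_{cu}$ is also linear, $h=P_{cu}\circ\mathrm{ev}_0\circ(a\mapsto Y_a)$ is $C^k$.

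Then I would compute the values at the origin. The remark after \Cref{prop:T_a is a contraction} gives $Y_0=0$, so $h(0)=P_{cu}Y_0(0)=0$. Differentiating the fixed-point identity $Y_a=\mathcal T_a(Y_a)$ at $a=0$ gives
\[
DY_0[b]=e^{A\cdot}b+D_Y\mathcal T_0(0)[DY_0 b].
\]
The formula in \Cref{lem:estimate of D^n_YT_a(Y)} with $n=1$ shows that $D_Y\mathcal T_0(0)$ involves $DN(Y(\sigma))=DN(0)=0$ by \Cref{lem:f N are smooth}. Hence $DY_0[b]=e^{A\cdot}b$. Since $b\in E_c^{ss}$ and $E_c^{ss}$ is $A$-invariant, $DY_0[b](0)=b$ and $P_{cu}b=0$. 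Therefore $Dh(0)=0$.

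The main obstacle is the regularity claim itself. Lipschitz or $C^1$ dependence is easy, but getting $C^k$ requires that $Y\mapsto N\circ Y$ is genuinely $C^k$ as a map into the weighted space. Differentiating the Nemytskii operator on exponentially weighted spaces can lose regularity. Here it does not: the $n$-th derivative produces the product weight $e^{-n\beta\sigma}$, which is at least as fast as $e^{-\beta\sigma}$, so the same weight $\beta$ works for all orders. I would rely on the bounds of \Cref{lem:estimate of D^n_YT_a(Y)}, which are uniform in $\beta$-norm up to order $k+1$. I would verify the continuity of $D^k_Y\mathcal T_a$ in $Y$ using the bound on $D^{k+1}N$, which gives Lipschitz continuity of the top derivative.
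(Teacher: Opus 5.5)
Your proposal is correct and is essentially the paper's proof. Both apply the implicit function theorem to $Y-\mathcal T_a(Y)$, invert $I-D_Y\mathcal T_a(Y_a)$ by a Neumann series using $q(1,R,\beta)<1$, use uniqueness of the fixed point to identify the local implicit solution with $a\mapsto Y_a$, and read off $h=P_{cu}Y_a(0)$. The paper gets $Dh(0)=0$ by differentiating the integral formula $h(a)=-\int_0^{\infty}e^{-A\sigma}P_{cu}N(Y_a(\sigma))\,d\sigma$ rather than the full fixed-point identity, but both rest on $Y_0=0$ and $DN(0)=0$.
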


\begin{proof}
On the open set $B_{\varrho(\beta,R)}\times B_R^\beta$, let
\[
\mathcal F(a,Y):=Y-\mathcal T_a(Y).
\]
By \Cref{lem:estimate of D^n_YT_a(Y)}, $\mathcal F$ is $C^k$.  At a fixed
point,
\[
D_Y\mathcal F(a,Y_a)=I-D_Y\mathcal T_a(Y_a),
\]
and
\[
\|D_Y\mathcal T_a(Y_a)\|\leq q(1,R,\beta)<1.
\]
Thus $D_Y\mathcal F(a,Y_a)$ is invertible, with inverse given by the
Neumann series.  The implicit function theorem and uniqueness of the fixed
point show that $a\mapsto Y_a$ is $C^k$; hence so is
$h(a)=P_{cu}Y_a(0)$.  Finally, $Y_0=0$ gives $h(0)=0$, while $DN(0)=0$
and the integral formula for $h$ give $Dh(0)=0$.
\end{proof}

\begin{proposition}
The set $\widetilde L(\beta,R)$ is an $m$-dimensional $C^k$ embedded
submanifold of $\mathscr T_0$, and
\[
T_0\widetilde L(\beta,R)=E_c^{ss}.
\]
\end{proposition}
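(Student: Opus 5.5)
The set $\widetilde L(\beta,R)$ is the graph of $h$ over an open ball in $E_c^{ss}$, so I will show that this graph is an embedded submanifold. Consider the map
\[
\Psi:B_{\varrho(\beta,R)}\longrightarrow\mathscr T_0,\qquad \Psi(a)=a+h(a),
\]
where $B_{\varrho(\beta,R)}$ is an open subset of the $m$-dimensional space $E_c^{ss}$ (recall $\dim E_c^{ss}=m$ from \Cref{prop:Linear space decomposition/spectral splitting}). By \Cref{lem:regularity lemma for submanifold arguement for LR}, $h$ is $C^k$ and takes values in $E_c^{cu}$, so $\Psi$ is $C^k$.

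First, $\Psi$ is injective with continuous inverse on its image. The inverse is $P_{ss}|_{\widetilde L(\beta,R)}$, because $P_{ss}\Psi(a)=a$ (recall $P_{ss}Y_a(0)=a$ and $h(a)\in E_c^{cu}$). The map $P_{ss}$ is linear and continuous on $\mathscr T_0$, so $\Psi$ is a homeomorphism onto $\widetilde L(\beta,R)$ with the subspace topology.

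Second, $\Psi$ is an immersion. For $\eta\in E_c^{ss}$ we have $D\Psi(a)\eta=\eta+Dh(a)\eta$, and applying $P_{ss}$ returns $\eta$. Hence $D\Psi(a)$ is injective for every $a$ and has rank $m$. An injective immersion that is a homeomorphism onto its image is a $C^k$ embedding, so $\widetilde L(\beta,R)$ is an $m$-dimensional $C^k$ embedded submanifold of $\mathscr T_0$.

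Equivalently, one can use the global chart $\Theta(y)=(P_{ss}y,\,P_{cu}y-h(P_{ss}y))$, defined on the open set $P_{ss}^{-1}(B_{\varrho(\beta,R)})$. This is a $C^k$ diffeomorphism onto its image, with inverse $(a,b)\mapsto a+h(a)+b$, and it maps $\widetilde L(\beta,R)$ onto $B_{\varrho(\beta,R)}\times\{0\}$. This shows directly that $\widetilde L(\beta,R)$ is a slice.

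Finally, $0=\Psi(0)\in\widetilde L(\beta,R)$ because $h(0)=0$. The tangent space there is $T_0\widetilde L(\beta,R)=D\Psi(0)E_c^{ss}=\{\eta+Dh(0)\eta\}$, and since $Dh(0)=0$ this equals $E_c^{ss}$.

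The only point needing care is the claim $Dh(0)=0$, which I take from \Cref{lem:regularity lemma for submanifold arguement for LR}. It can be checked by differentiating the formula $h(a)=-\int_0^\infty e^{-A\sigma}P_{cu}N(Y_a(\sigma))\,d\sigma$ at $a=0$. Since $Y_0=0$ and $DN(0)=0$, the integrand's derivative is $DN(0)[\partial_aY_a]=0$. Differentiating under the integral sign is justified by the $\mathscr X_\beta$ bounds and by the differentiability of $a\mapsto Y_a$ into $\mathscr X_\beta$.
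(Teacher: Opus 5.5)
Your proof is correct and follows essentially the same route as the paper. The paper proves the embedding with exactly the straightening chart $u\mapsto\bigl(P_{ss}u,\,P_{cu}u-h(P_{ss}u)\bigr)$ on $P_{ss}^{-1}(B_{\varrho(\beta,R)})$ that you give as your ``equivalent'' argument; your injective-immersion-plus-homeomorphism version rests on the same identity $P_{ss}\circ\Psi=\mathrm{id}$. The paper likewise gets $T_0\widetilde L(\beta,R)=E_c^{ss}$ from $D\Psi(0)=I+Dh(0)$ with $Dh(0)=0$, justified by $Y_0=0$, $DN(0)=0$ and the integral formula for $h$, just as you do.
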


\begin{proof}
Recall that $\mathscr T_0=E_c^{ss}\oplus E_c^{cu}$.  On the open set
$P_{ss}^{-1}(B_{\varrho(\beta,R)})$, define
\[
\phi(u)
:=\bigl(P_{ss}u,\,P_{cu}u-h(P_{ss}u)\bigr).
\]
This is a $C^k$ diffeomorphism onto
$B_{\varrho(\beta,R)}\times E_c^{cu}$, with inverse
\[
\phi^{-1}(a,b)=a+h(a)+b.
\]
Moreover,
\[
\phi\bigl(\widetilde L(\beta,R)\bigr)
=B_{\varrho(\beta,R)}\times\{0\}.
\]
Hence $\widetilde L(\beta,R)$ is an $m$-dimensional embedded submanifold.
The parametrization $F(a)=a+h(a)$ satisfies
\[
DF(0)=I+Dh(0)=I\quad\text{on }E_c^{ss},
\]
which proves $T_0\widetilde L(\beta,R)=E_c^{ss}$.
\end{proof}

The next subsection identifies this Lyapunov--Perron graph with the local
strong stable set in the physical zero-energy phase space.

\subsection{Proof of \Cref{thm:strong stable set is a submfd and its tangent space is Ecss}}\label{subscn: main part of the proof of thm:strong stable set is a submfd and its tangent space is Ecss}
We begin with Duhamel's formula.
\begin{lemma}\label{lem:Duhamel's Formula}
If $Y$ solves
\[
Y'(\tau)=AY(\tau)+ b(\tau),
\]
then, for every $\tau_1\leq\tau_2$,
\[
Y(\tau_2)=e^{A(\tau_2-\tau_1)}Y(\tau_1)
+\int_{\tau_1}^{\tau_2}e^{A(\tau_2-\sigma)} b(\sigma)\,d\sigma.
\]
\end{lemma}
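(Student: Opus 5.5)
The plan is to reduce the statement to the scalar-coefficient case by a change of unknown. Set
\[
\Psi(\tau):=e^{-A\tau}Y(\tau),
\]
which is $C^1$ whenever $Y$ is, since $\tau\mapsto e^{-A\tau}$ is smooth with derivative $-Ae^{-A\tau}=-e^{-A\tau}A$. Here $A$ commutes with its own exponential, a fact already used above when noting that $e^{A\tau}$ commutes with $P_{ss}$ and $P_{cu}$.

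Differentiating by the product rule and substituting $Y'=AY+b$ gives
\[
\Psi'(\tau)=-e^{-A\tau}AY(\tau)+e^{-A\tau}\bigl(AY(\tau)+b(\tau)\bigr)=e^{-A\tau}b(\tau).
\]
The two $A$-terms cancel. I will assume $b$ is continuous, or at least locally integrable in which case $Y$ is absolutely continuous, as holds in every application: there $b=N(Y)$ with $Y$ continuous. Under that assumption the fundamental theorem of calculus on $[\tau_1,\tau_2]$ yields
\[
e^{-A\tau_2}Y(\tau_2)-e^{-A\tau_1}Y(\tau_1)=\int_{\tau_1}^{\tau_2}e^{-A\sigma}b(\sigma)\,d\sigma .
\]

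To finish, multiply on the left by $e^{A\tau_2}$, which commutes with the integral because it is a constant linear map on the finite-dimensional space $\mathscr T_0$. The group property $e^{A\tau_2}e^{-A\sigma}=e^{A(\tau_2-\sigma)}$ then gives the stated formula.

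There is no real obstacle in this argument. The only points to state explicitly are the regularity hypothesis on $b$ and the commutation and group identities for the matrix exponential. Both follow from the power series, exactly as the earlier invariance argument did.
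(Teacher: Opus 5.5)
Your proposal is correct and follows the paper's route: set $e^{-A\tau}Y(\tau)$, differentiate so the $A$-terms cancel, integrate over $[\tau_1,\tau_2]$, and multiply by $e^{A\tau_2}$. Your explicit regularity assumption on $b$ (continuity, as in the application $b=N(Y)$) is a small clarification that the paper leaves implicit.
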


See \proofref{lem:Duhamel's Formula}{apdx:1}.

\begin{proposition}\label{prop:tilde L_R = all strong stable orbits}
For every $R>0$ such that $q(1,R,\beta)<1$,
\[
\widetilde L(\beta,R)
=
\left\{
Y(0):
\begin{array}{l}
Y'=f(Y),\quad \|Y\|_\beta<R,\\[1mm]
\|P_{ss}Y(0)\|<\varrho(\beta,R)
\end{array}
\right\}.
\]
\end{proposition}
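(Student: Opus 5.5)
We prove the two inclusions separately. The forward inclusion is immediate. Let $u\in\widetilde L(\beta,R)$, so that $u=a+h(a)=Y_a(0)$ for some $a\in B_{\varrho(\beta,R)}$. By \Cref{prop:Y_a is a solution of the system in new coordinates}, $Y_a$ solves $Y'=f(Y)$ on $[0,+\infty)$, and $Y_a\in B_R^\beta$, so $\|Y_a\|_\beta<R$. Evaluating the fixed-point equation at $\tau=0$ gives $P_{ss}Y_a(0)=a$, hence $\|P_{ss}Y_a(0)\|<\varrho(\beta,R)$. Thus $u$ belongs to the right-hand set.

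For the reverse inclusion, let $Y$ be a solution of $Y'=f(Y)$ on $[0,+\infty)$ with $\|Y\|_\beta<R$, and set $a:=P_{ss}Y(0)$, so that $\|a\|<\varrho(\beta,R)$. We show that $Y$ is a fixed point of $\mathcal T_a$ in $\overline B_R^\beta$. Uniqueness of the fixed point from \Cref{prop:T_a is a contraction} then gives $Y=Y_a$, and hence $Y(0)=a+h(a)\in\widetilde L(\beta,R)$.

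Write $Y'=AY+N(Y)$. Since $P_{ss}$ and $P_{cu}$ commute with $A$ and $e^{A\tau}$, the components $P_{ss}Y$ and $P_{cu}Y$ solve $Z'=AZ+P_{ss}N(Y)$ and $Z'=AZ+P_{cu}N(Y)$, respectively.

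\emph{Stable component.} Applying \Cref{lem:Duhamel's Formula} on $[0,\tau]$ gives
\[
P_{ss}Y(\tau)=e^{A\tau}a+\int_0^\tau e^{A(\tau-\sigma)}P_{ss}N(Y(\sigma))\,d\sigma.
\]

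\emph{Center--unstable component.} Applying \Cref{lem:Duhamel's Formula} on $[\tau,T]$ and multiplying by $e^{-A(T-\tau)}$, which is legitimate on the invariant space $E_c^{cu}$, gives
\[
P_{cu}Y(\tau)=e^{-A(T-\tau)}P_{cu}Y(T)-\int_\tau^T e^{A(\tau-\sigma)}P_{cu}N(Y(\sigma))\,d\sigma.
\]
We then let $T\to\infty$. The boundary term is bounded by
\[
\|e^{-A(T-\tau)}P_{cu}\|\,\|Y(T)\|\le M\|P_{cu}\|\,R\,e^{-\beta T}\to0.
\]
The integral converges absolutely, because $\|N(Y(\sigma))\|\le C_1^R R e^{-\beta\sigma}$ by \Cref{lem:f N are smooth}; this uses $Y(\sigma)\in\overline B_R$, which follows from $\|Y\|_\beta<R$. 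In the limit,
\[
P_{cu}Y(\tau)=-\int_\tau^\infty e^{A(\tau-\sigma)}P_{cu}N(Y(\sigma))\,d\sigma.
\]

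Adding the two components gives $Y=\mathcal T_a(Y)$, as required.

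The main point needing care is the limit $T\to\infty$ in the center--unstable component. It works because the bound $\|e^{-A\tau}P_{cu}\|\le M$ holds uniformly for $\tau\ge0$; this reflects the fact that the zero eigenvalue of $A$ has no nontrivial Jordan block, as shown in \Cref{prop:Linear space decomposition/spectral splitting}. Combined with the decay $e^{-\beta T}$ coming from $\|Y\|_\beta<R$, this kills the boundary term.
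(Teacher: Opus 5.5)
Your proof is correct and follows the paper's argument essentially step by step. The forward inclusion comes from the contraction and solution properties of $Y_a$. The reverse inclusion uses Duhamel on the stable component, then backward Duhamel on $[\tau,T]$ for the center--unstable component with $T\to\infty$ via $\|e^{-A(T-\tau)}P_{cu}\|\le M$, and finally uniqueness of the fixed point in $\overline B_R^\beta$; the extra factor $\|P_{cu}\|$ in your boundary-term bound is unnecessary but harmless.
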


\begin{proof}
The inclusion from left to right follows from
\Cref{prop:T_a is a contraction,prop:Y_a is a solution of the system in new coordinates}.
Conversely, let $Y\in B_R^\beta$ solve $Y'=AY+N(Y)$ and put
$a=P_{ss}Y(0)\in B_{\varrho(\beta,R)}$.  Duhamel's formula gives
\begin{equation}\label{eq:proof of all strong stable orbits are in tilde L_R, ss part}
P_{ss}Y(\tau)
=e^{A\tau}a
+\int_0^\tau e^{A(\tau-\sigma)}P_{ss}N(Y(\sigma))\,d\sigma.
\end{equation}
For $T>\tau$ it also gives
\[
P_{cu}Y(T)
=e^{A(T-\tau)}P_{cu}Y(\tau)
+\int_\tau^T e^{A(T-\sigma)}P_{cu}N(Y(\sigma))\,d\sigma.
\]
Multiplying by $e^{-A(T-\tau)}P_{cu}$ yields
\[
P_{cu}Y(\tau)
=e^{-A(T-\tau)}P_{cu}Y(T)
-\int_\tau^T e^{A(\tau-\sigma)}P_{cu}N(Y(\sigma))\,d\sigma.
\]
Since $\|e^{-A(T-\tau)}P_{cu}\|\leq M$ and $Y(T)\to0$, letting
$T\to+\infty$ gives
\begin{equation}\label{eq:proof of all strong stable orbits are in tilde L_R, cu part}
P_{cu}Y(\tau)
=-\int_\tau^{+\infty}e^{A(\tau-\sigma)}P_{cu}N(Y(\sigma))\,d\sigma.
\end{equation}
Adding the \eqref{eq:proof of all strong stable orbits are in tilde L_R, ss part} shows that $Y=\mathcal T_a(Y)$.  The fixed point
is unique, hence $Y=Y_a$ and $Y(0)=a+h(a)\in\widetilde L(\beta,R)$.
\end{proof}

We next record the finite-time flow facts used below.  For $T>0$, let
\begin{align*}
\mathcal D_T^+
&:=\{Z\in\mathcal P:\Phi^\tau(Z)\text{ exists for }0\leq\tau\leq T\},\\
\mathcal D_T^-
&:=\{Z\in\mathcal P:\Phi^\tau(Z)\text{ exists for }-T\leq\tau\leq0\}.
\end{align*}
\begin{lemma}
For every $T>0$, the sets $\mathcal D_T^+$ and $\mathcal D_T^-$ are open
in $\mathcal P$, and
\[
\Phi^T:\mathcal D_T^+\longrightarrow\mathcal D_T^-
\]
is a $C^k$ diffeomorphism whose inverse is
$\Phi^{-T}:\mathcal D_T^-\to\mathcal D_T^+$.
\end{lemma}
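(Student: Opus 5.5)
The lemma is the standard flow-domain statement for a $C^k$ vector field on a manifold. The plan is to realize $\mathcal P$ as a $C^k$ (in fact smooth) embedded hypersurface and then apply the fundamental theorem on flows.

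First I would check that $\mathcal P$ is a regular level set. It is the zero set of
\[
G(\rho,s,v,w)=\tfrac12(v^2+\|w\|^2)-U(s)
\]
inside the manifold $\mathbb R\times T\mathcal E_0\times\mathbb R$. Its differential in the $(v,w)$ directions is $(v,w)$. This vanishes only where $v=0$ and $w=0$, and there $G=-U(s)<0$, so such points do not lie on $\mathcal P$. Hence $dG\neq0$ along $\mathcal P$, and $\mathcal P$ is an embedded submanifold of dimension $2m-1$.

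Next I would show that $\mathfrak X$ is tangent to $\mathcal P$. Along \eqref{eq:M1}--\eqref{eq:M4},
\[
\frac{d}{d\tau}G=vv'+\langle w,D_\tau w\rangle-\langle\nabla_{\mathcal E}U(s),w\rangle
=\tfrac12 v\|w\|^2-\tfrac12 v\|w\|^2=0 .
\]
So the field is a $C^k$ vector field on $\mathcal P$; it is smooth in fact, since $U$ is smooth on $\mathcal E_0$.

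With this in place, the fundamental theorem on flows for a $C^k$ vector field on a manifold applies. It gives an open flow domain $\mathcal D\subset\mathbb R\times\mathcal P$ with $C^k$ flow $\Phi:\mathcal D\to\mathcal P$. The set $\mathcal D_T^+$ consists of the points $Z$ with $[0,T]\times\{Z\}\subset\mathcal D$, and this set is open. The argument is that the maximal existence interval of a solution depends lower semicontinuously on the initial point: by compactness of $[0,T]$ and continuous dependence, nearby initial data have solutions on $[0,T+\epsilon)$. The same reasoning, with time reversed, shows that $\mathcal D_T^-$ is open.

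For $Z\in\mathcal D_T^+$, the orbit $\tau\mapsto\Phi^{\tau}(Z)$ exists on $[0,T]$. Then $\Phi^{-\sigma}(\Phi^T Z)=\Phi^{T-\sigma}(Z)$ exists for $\sigma\in[0,T]$, so $\Phi^T(Z)\in\mathcal D_T^-$. By the group property, $\Phi^{-T}\circ\Phi^T=\id$ on $\mathcal D_T^+$, and symmetrically $\Phi^T\circ\Phi^{-T}=\id$ on $\mathcal D_T^-$. Both maps are $C^k$ because they are restrictions of $\Phi$ to open slices, so $\Phi^T$ is a $C^k$ diffeomorphism.

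The only point needing care is openness, which amounts to the lower semicontinuity of the escape time. This is standard, and I would simply cite it. There is a subtlety in that $\mathcal P$ is not compact and orbits may approach collisions, where the field is undefined. This is harmless here, because existence is defined intrinsically on the open manifold $\mathcal P$.
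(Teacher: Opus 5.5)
Your proof is correct and follows essentially the same route as the paper: the maximal flow domain is open, so the slices $\mathcal D_T^{\pm}$ are open; smooth dependence on initial data gives regularity of $\Phi^{\pm T}$; and the group property shows they are mutually inverse. You also verify explicitly that $\mathcal P$ is a regular level set and that $\mathfrak X$ is tangent to it (indeed $G$ is a first integral of \eqref{eq:M1}--\eqref{eq:M4}). The paper takes this for granted.
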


\begin{proof}
For the smooth McGehee vector field on $\mathcal P$, the maximal flow domain
\[
\mathscr D:=\{(\tau,Z)\in\mathbb R\times\mathcal P:
\Phi^\tau(Z)\text{ is defined}\}
\]
is open.  Since existence at time $T$ implies existence on the whole segment
from $0$ to $T$, one has
\[
\mathcal D_T^+=\{Z:(T,Z)\in\mathscr D\},
\qquad
\mathcal D_T^-=\{Z:(-T,Z)\in\mathscr D\}.
\]
Both sets are therefore open.
Smooth dependence on initial data makes $\Phi^T$ smooth on
$\mathcal D_T^+$.  The flow property gives
\[
\Phi^{-T}\circ\Phi^T=\operatorname{id}_{\mathcal D_T^+},
\qquad
\Phi^T\circ\Phi^{-T}=\operatorname{id}_{\mathcal D_T^-},
\]
so $\Phi^T$ is a diffeomorphism with the stated inverse.
\end{proof}

We use the following continuous local size function relative
to the equilibrium:
\[
\mathfrak d_c(\rho,s,v,w)
:=\max\bigl\{|\rho|,d_{\mathcal E}(s,c),|v-v_*|,\|w\|\bigr\}.
\]
Thus, 
\begin{equation}\label{eq:Uc by local size function}
\mathcal U_c(\delta)
=
\{Z\in\mathcal P:\mathfrak d_c(Z)<\delta,\ \rho\geq0,\ v\leq v_*\}.
\end{equation}
For $T>0$, set
\[
\mathcal O_T(\delta)
:=\left\{Z\in\mathcal D_T^+:
\mathfrak d_c(\Phi^\tau(Z))<\delta
\text{ for all }0\leq\tau\leq T\right\}.
\]
$\mathcal O_T(\delta)$ is open in $\mathcal P$ since the map
\[
Z\longmapsto
\max_{0\leq\tau\leq T}\mathfrak d_c(\Phi^\tau(Z))
\]
is continuous on $\mathcal D_T^+$ by continuous dependence of the flow on
initial data, uniformly on the compact interval $[0,T]$.  Hence
$\mathcal O_T(\delta)$ is open in the open set $\mathcal D_T^+$, and
therefore in $\mathcal P$.

We now prove the \Cref{thm:strong stable set is a submfd and its tangent space is Ecss}.
Let
\[
\widetilde{\mathcal U}_R^{ss}
:=\chi^{-1}(\widetilde L(\beta,R)).
\]
It is an $m$-dimensional $C^k$ embedded submanifold of $\mathcal{P}$ near $e_c^+$.
\begin{lemma}\label{lem:tangent-space identification at ec+}
Under the natural identification
$T_{e_c^+}\mathcal P\cong\mathscr T_0$, one has
\[
d\chi|_{e_c^+}=d\chi^{-1}|_0=I.
\]
\end{lemma}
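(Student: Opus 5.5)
The plan is to compute $d\chi$ at $e_c^+$ directly from the definition of the chart. We use the identification $T_{e_c^+}\mathcal P=\{(\rho,\xi,\delta v,W):\delta v=0\}\cong\mathscr T_0$ from \Cref{subscn:Linearization at ec+}, and show that $\chi$ acts as the identity on this space. Write $\chi^{-1}(\rho,\xi,W)=(\rho,\exp_c\xi,\hat V(\xi,W),P_\xi W)$.

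Take a curve $\epsilon\mapsto(\epsilon\rho_1,\epsilon\xi_1,\epsilon W_1)$ in $\mathscr T_0$ and differentiate its image under $\chi^{-1}$ at $\epsilon=0$, one component at a time.

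\emph{Radial and shape components.} The $\rho$-component gives $\rho_1$. The shape component gives $(d\exp_c)_0\xi_1=\xi_1$.

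\emph{Velocity component.} By \eqref{estimate hat V}, $\hat V(\xi,W)=v_*+O(\|\xi\|^2+\|W\|^2)$. Hence $\tfrac{d}{d\epsilon}\big|_0\hat V(\epsilon\xi_1,\epsilon W_1)=0$, consistent with $\delta v=0$ on $T_{e_c^+}\mathcal P$.

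\emph{Tangential velocity component.} This is the only point needing care. The ambient vector $P_{\epsilon\xi_1}(\epsilon W_1)\in X$ has derivative at $\epsilon=0$ equal to
\[
\frac{d}{d\epsilon}\Big|_{0}\,\epsilon P_{\epsilon\xi_1}W_1=P_0W_1=W_1,
\]
because the product rule kills the term $\epsilon\,\tfrac{d}{d\epsilon}P_{\epsilon\xi_1}W_1$ at $\epsilon=0$. Equivalently, the base point is $w=0$, so the covariant derivative and the ambient derivative agree there. The variation in the $w$-slot is therefore $W_1$.

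Putting the components together, $d\chi^{-1}|_0(\rho_1,\xi_1,W_1)=(\rho_1,\xi_1,0,W_1)$, which is the identity under the identification. Since $\chi$ is a local diffeomorphism with $\chi(e_c^+)=0$, the chain rule gives $d\chi|_{e_c^+}=(d\chi^{-1}|_0)^{-1}=I$.

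The only subtle step is the last component, namely the claim that the $w$-variation equals $W_1$ with no connection term. I expect the product-rule argument above, using that the base value of $w$ is zero, to handle it.
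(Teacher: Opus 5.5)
Your proposal is correct and takes essentially the same approach as the paper: both compute $d\chi^{-1}|_0(a,\eta,\zeta)=(a,\eta,0,\zeta)$ component by component and then conclude by the chain rule. The ingredients match as well, namely $(d\exp_c)_0=P_0=\mathrm{Id}$, the vanishing of $d\widehat V$ at the origin (from criticality of $c$), and the product rule for $P_\xi W$ at $W=0$; evaluating along straight-line curves rather than taking total differentials is only a cosmetic difference.
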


\begin{proof}
Put $\varphi=\chi^{-1}$.  In the product coordinates,
\[
 \varphi(\rho,\xi,W)=
 \Bigl(\rho,\exp_c\xi,
 \widehat V(\xi,W),P_\xi W\Bigr),
 \qquad
 \widehat V(\xi,W)=\sqrt{2U(\exp_c\xi)-\|W\|^2}.
\]
The radicand is positive near $(0,0)$ because its value there is $v_*^2$.
Since $(d\exp_c)_0=P_0=\id$, $d(U|_{\mathcal E})_c=0$, and
$d(\|W\|^2)_0=0$, one has $d\widehat V_{(0,0)}=0$.  Moreover,
\[
 d(P_\xi W)_{(0,0)}[\eta,\zeta]
 =(dP)_0[\eta]0+P_0\zeta=\zeta.
\]
Therefore
\[
 d\varphi_0(a,\eta,\zeta)=(a,\eta,0,\zeta).
\]
Under the natural identification
$\mathcal J(a,\eta,\zeta)=(a,\eta,0,\zeta)$ of the coordinate tangent
space with $T_{e_c^+}\mathcal P$, the chain rule applied to
$\chi\circ\varphi=\id$ gives
\[
 d\chi|_{e_c^+}=d\chi^{-1}|_0=\id.
\]
\end{proof}

Consequently,
\[
T_{e_c^+}\widetilde{\mathcal U}_R^{ss}=d\chi^{-1}|_0 T_0 \tilde{L}(\beta,R) =  E_c^{ss}.
\]
The sets $\{\rho<0\}$, $\{\rho=0\}$, and $\{\rho>0\}$ are invariant
because $\rho'=-v\rho/2$.  Define the physical part
\[
\mathcal U_R^{ss}
:=\widetilde{\mathcal U}_R^{ss}\cap\{\rho\geq0\}.
\]
Since the center--unstable space has zero $\rho$-component,
$\rho(a+h(a))=\rho(a)$.  Hence we obtains
the Lyapunov--Perron parametrization
\begin{equation}\label{eq:parametrization of U_R^ss}
    \mathcal U_R^{ss}
    =
    \chi^{-1}\left(
    \{a+h(a):a\in B_{\varrho(\beta,R)}\subset E_c^{ss},\ \rho(a)\geq0\}
    \right).
\end{equation}
Thus $\mathcal U_R^{ss}$ is an $m$-dimensional $C^k$ embedded
submanifold with boundary $\rho=0$, and
\[
T_{e_c^+}\mathcal U_R^{ss}=E_c^{ss}.
\]

After decreasing $R$ if necessary, smoothness of $\chi^{-1}$ gives a
constant $C_\chi(R)$ such that
\begin{equation}\label{eq:local size controlled by chi coordinates}
\mathfrak d_c(\chi^{-1}(Y))
\leq C_\chi(R)\|Y\|_{\mathscr T_0},
\qquad \|Y\|_{\mathscr T_0}\leq R.
\end{equation}

\begin{lemma}\label{lem:conjucation of flows}
For $Z_a=\chi^{-1}(a+h(a))\in\mathcal U_R^{ss}$,
\[
\chi(\Phi^\tau(Z_a))=Y_a(\tau),
\qquad \tau\geq0.
\]
\end{lemma}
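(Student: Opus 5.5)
The statement is a conjugation identity between the McGehee flow and the coordinate flow. It follows from uniqueness of solutions to the ODE $Y'=f(Y)$ in the chart, once we know that the $\chi$-image of the physical orbit stays inside the chart. The plan has three steps.

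First, set $Y(\tau):=Y_a(\tau)$, which by \Cref{prop:Y_a is a solution of the system in new coordinates} solves $Y'=f(Y)$ on $[0,+\infty)$ with $\|Y(\tau)\|\leq Re^{-\beta\tau}\leq R$. Hence $Y(\tau)\in\overline B_R\subset\chi(\mathcal O)$ for all $\tau\geq0$. Define the curve $\tilde Z(\tau):=\chi^{-1}(Y_a(\tau))\in\mathcal O\subset\mathcal P$. Because $f=\chi_*\mathfrak X$ by \eqref{eq:f is the pushforward}, the chain rule gives
\[
\tilde Z'(\tau)=d\chi^{-1}|_{Y_a(\tau)}f(Y_a(\tau))=\mathfrak X(\tilde Z(\tau)).
\]
So $\tilde Z$ is an integral curve of the McGehee field defined for all $\tau\geq0$, with $\tilde Z(0)=\chi^{-1}(a+h(a))=Z_a$.

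Second, invoke uniqueness of integral curves for the $C^k$ vector field $\mathfrak X$ on $\mathcal P$, where $k\geq3$, so $\mathfrak X$ is locally Lipschitz. Both $\tau\mapsto\Phi^\tau(Z_a)$ and $\tilde Z$ are solutions through $Z_a$. The maximal solution $\Phi^\tau(Z_a)$ therefore exists at least on $[0,+\infty)$, so $Z_a\in\mathcal D_T^+$ for every $T$, and $\Phi^\tau(Z_a)=\tilde Z(\tau)$ for all $\tau\geq0$. Applying $\chi$ yields $\chi(\Phi^\tau(Z_a))=Y_a(\tau)$.

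The only point needing care is domain bookkeeping. The chart $\chi$ is defined on the full extended space, including $\rho<0$, and the vector field $f$ is the pushforward on all of $\chi(\mathcal O)$, so no sign restriction on $\rho$ is needed for the argument. For $Z_a\in\mathcal U_R^{ss}$ we additionally have $\rho\geq0$, and this is preserved because the sets $\{\rho>0\}$ and $\{\rho=0\}$ are invariant. The configurations in $\mathcal O$ lie in $\mathcal E_0$, so the orbit is collision-free and the McGehee field is smooth along it. I expect no genuine obstacle. The main thing to verify is that $Y_a$ never leaves $\overline B_R$, and this is exactly the weighted bound $\|Y_a\|_\beta<R$ already established.
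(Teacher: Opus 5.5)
Your proof is correct and follows essentially the same route as the paper's. You pull $Y_a$ back by $\chi^{-1}$, which is legitimate since $\|Y_a(\tau)\|\le R$ keeps it in $\overline B_R\subset\chi(\mathcal O)$; you use $f=\chi_*\mathfrak X$ and the chain rule to get an integral curve of $\mathfrak X$ through $Z_a$ defined for all $\tau\geq0$; and you conclude by uniqueness that the maximal solution $\Phi^\tau(Z_a)$ exists on $[0,\infty)$ and coincides with it. One minor imprecision: $\chi$ is defined only on the neighborhood $\mathcal O$, not on the full extended space, but your argument only uses it over $\overline B_R$, so nothing is affected.
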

See \proofref{lem:conjucation of flows}{apdx:1}.

Choose $R$ so that, in addition,
$R C_\chi(R)<\delta$. Then using \Cref{lem:conjucation of flows} and$\|Y_a(\tau)\|_{\mathscr{T}_0}\leq R\mathrm{e}^{-\beta\tau}$ gives
    \begin{align*}
        \mathfrak d_c(\Phi^\tau(Z_a)) & =  \mathfrak d_c(\chi^{-1}(Y_a(\tau)))\\
        &\leq C_\chi(R) \|Y_a(\tau)\|_{\mathscr{T}_0}\\
        &\leq RC_\chi(R) \mathrm{e}^{-\beta\tau}\\
        &<\delta \mathrm{e}^{-\beta\tau}.
    \end{align*}
Moreover, $\rho(\Phi^\tau(Z_a))\geq0$.  Since the orbit tends to
$e_c^+$ and $v'=\|w\|^2/2\geq0$, one also has
$v(\Phi^\tau(Z_a))\leq v_*$ for every $\tau\geq0$.  By
\eqref{eq:Uc by local size function}, the whole orbit lies in
$\mathcal U_c(\delta)$.
We have proved the following result.
\begin{proposition}\label{prop:results of step 1 of proving W(delta) is submfd}
For $R>0$ sufficiently small, $\mathcal U_R^{ss}$ is an
$m$-dimensional $C^k$ embedded submanifold with boundary $\rho=0$,
\[
T_{e_c^+}\mathcal U_R^{ss}=E_c^{ss},
\]
and
\[
\Phi^\tau(\mathcal U_R^{ss})\subset\mathcal U_c(\delta),
\qquad
\Phi^\tau(Z)\to e_c^+
\quad(\tau\to+\infty)
\]
for every $Z\in\mathcal U_R^{ss}$.  In particular,
\[
\mathcal U_R^{ss}\subset W_{loc}^{ss}(e_c^+;\delta).
\]
\end{proposition}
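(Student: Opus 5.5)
The statement collects what has just been established, so the plan is to assemble it from the preceding constructions, in the order the paper built them.

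\emph{Submanifold structure and tangent space.} First take $R$ so small that $q(1,R,\beta)<1$ and $\overline B_R\subset\chi(\mathcal O)$. The proposition on $\widetilde L(\beta,R)$ then shows that it is an $m$-dimensional $C^k$ graph $a\mapsto a+h(a)$ over $B_{\varrho(\beta,R)}\subset E_c^{ss}$, with $T_0\widetilde L=E_c^{ss}$. Pulling back by the $C^k$ chart $\chi^{-1}$ gives $\widetilde{\mathcal U}^{ss}_R$. \Cref{lem:tangent-space identification at ec+} gives $d\chi^{-1}|_0=I$, so $T_{e_c^+}\widetilde{\mathcal U}^{ss}_R=E_c^{ss}$.

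\emph{Boundary structure.} Since $E_c^{cu}$ has zero $\rho$-component, $\rho(a+h(a))=\rho(a)$. Hence intersecting with $\{\rho\ge0\}$ amounts to restricting the parameter $a$ to the half-ball $\{\rho(a)\ge0\}$. The $\rho$-coordinate is a nonzero linear functional on $E_c^{ss}$, so this half-ball is a manifold with boundary, and its boundary is exactly $\{\rho=0\}$. Composing with the embedding shows that $\mathcal U_R^{ss}$ is a $C^k$ embedded submanifold with boundary $\rho=0$. The tangent space at the boundary point $e_c^+$ is still the full $E_c^{ss}$.

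\emph{Dynamical properties.} Here I would use \Cref{lem:conjucation of flows}, which gives $\chi(\Phi^\tau(Z_a))=Y_a(\tau)$. Combined with \Cref{prop:Y_a is a solution of the system in new coordinates}, this yields $\|Y_a(\tau)\|\le Re^{-\beta\tau}$. After shrinking $R$ so that $RC_\chi(R)<\delta$, estimate \eqref{eq:local size controlled by chi coordinates} gives $\mathfrak d_c(\Phi^\tau Z_a)<\delta e^{-\beta\tau}\to0$. This shows both that the orbit stays in the size-$\delta$ neighbourhood and that it converges to $e_c^+$.

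To place the orbit inside $\mathcal U_c(\delta)$ via \eqref{eq:Uc by local size function}, two sign conditions remain to be checked.
\begin{itemize}
\item $\rho\ge0$ is preserved, because the sign of $\rho$ is invariant under the flow.
\item $v\le v_*$ follows from monotonicity: $v'=\|w\|^2/2\ge0$ and $v\to v_*$.
\end{itemize}
Both conditions hold, so $\Phi^\tau(\mathcal U^{ss}_R)\subset\mathcal U_c(\delta)$ for all $\tau\ge0$, and $\mathcal U_R^{ss}\subset W^{ss}_{loc}(e_c^+;\delta)$ by definition.

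The only delicate point is the conjugation lemma: the chart orbit $Y_a$ must not leave $\chi(\mathcal O)$, so that it genuinely corresponds to the McGehee flow. This is guaranteed by $Y_a(\tau)\in B_R\subset\chi(\mathcal O)$ together with uniqueness of solutions of the ODE.
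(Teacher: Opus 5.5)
Your proposal is correct and follows essentially the same route as the paper. You pull back the Lyapunov--Perron graph $\widetilde L(\beta,R)$ by $\chi$ and use $d\chi^{-1}|_0=I$ for the tangent space; then $\rho(a+h(a))=\rho(a)$ gives the boundary at $\rho=0$; finally the conjugation lemma, the choice $RC_\chi(R)<\delta$, sign invariance of $\rho$, and the monotonicity of $v$ toward $v_*$ place every forward orbit in $\mathcal U_c(\delta)$.
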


\begin{proposition}\label{prop:results of step 2 of proving W(delta) is submfd}
There exists $T_*>0$, independent of
$Z\in W_{loc}^{ss}(e_c^+;\delta)$, such that
\begin{equation}\label{eq:uniform shrinking}
\Phi^{T_*}\bigl(W_{loc}^{ss}(e_c^+;\delta)\bigr)
\subset\mathcal U_R^{ss}.
\end{equation}
\end{proposition}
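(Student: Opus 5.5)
We will use the uniform exponential decay of \Cref{prop:outgoing zero-energy solutions locally exponentially converge}, then identify the shifted orbit with the Lyapunov--Perron fixed point through \Cref{prop:tilde L_R = all strong stable orbits}.

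Let $Z\in W_{loc}^{ss}(e_c^+;\delta)$ and write $Z(\tau)=\Phi^\tau(Z)$. By definition $Z(\tau)\in\mathcal U_c(\delta)$ for all $\tau\ge0$ and $Z(\tau)\to e_c^+$. \Cref{prop:outgoing zero-energy solutions locally exponentially converge} gives a constant $C=C(U,\delta)$, independent of $Z$, such that
\[
\mathfrak d_c(Z(\tau))\le C e^{-\gamma_\delta\tau},\qquad \tau\ge0 ,
\]
because $\gamma_\delta$ does not exceed any of the four decay rates there. The smoothness of $\chi$ near $e_c^+$ gives the reverse of \eqref{eq:local size controlled by chi coordinates}: on $\mathcal U_c(\delta_0)$ we have $\|\chi(Z')\|\le C'_\chi\,\mathfrak d_c(Z')$. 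Hence $Y(\tau):=\chi(Z(\tau))$ satisfies $\|Y(\tau)\|\le C_1e^{-\gamma_\delta\tau}$ with $C_1=C'_\chi C$ uniform in $Z$.

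Fix $T_*$ and put $\widetilde Y(\tau):=Y(\tau+T_*)$, which solves $\widetilde Y'=f(\widetilde Y)$. Since $\beta<\gamma_\delta$,
\[
e^{\beta\tau}\|\widetilde Y(\tau)\|\le C_1e^{-\gamma_\delta T_*}e^{-(\gamma_\delta-\beta)\tau}\le C_1e^{-\gamma_\delta T_*}.
\]
Therefore $\|\widetilde Y\|_\beta\le C_1e^{-\gamma_\delta T_*}$. Also $\|P_{ss}\widetilde Y(0)\|\le\|P_{ss}\|\,C_1e^{-\gamma_\delta T_*}$. Choose $T_*$, depending only on $C_1$, $\|P_{ss}\|$, $R$ and $\varrho(\beta,R)$, so that
\[
\|\widetilde Y\|_\beta<R,\qquad \|P_{ss}\widetilde Y(0)\|<\varrho(\beta,R).
\]
\Cref{prop:tilde L_R = all strong stable orbits} then gives $\widetilde Y(0)=\chi(\Phi^{T_*}Z)\in\widetilde L(\beta,R)$. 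Since $\rho\ge0$ is preserved by the flow, $\Phi^{T_*}Z\in\widetilde{\mathcal U}_R^{ss}\cap\{\rho\ge0\}=\mathcal U_R^{ss}$.

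The main point needing care is uniformity. The constant in \Cref{prop:outgoing zero-energy solutions locally exponentially converge} must be independent of the orbit. This holds because it uses only $F(0)\le\delta$ and the bound $K$, both fixed by $\delta$. We also need the whole orbit to stay inside the chart domain $\mathcal O$, where $\chi$ and the comparison constant $C'_\chi$ are valid. This is guaranteed by the standing assumption that $\mathcal U_c(\delta_0)$ lies in the chart. One further check is that $\widetilde Y(\tau)$ lies in $\overline B_R$ for every $\tau$, so that $\widetilde Y\in B_R^\beta$. This follows from $\|\widetilde Y(\tau)\|\le\|\widetilde Y\|_\beta e^{-\beta\tau}<R$.
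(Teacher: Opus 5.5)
Your proposal is correct and follows the paper's proof essentially step for step. Both arguments use the orbit-independent exponential bound $\mathfrak d_c(\Phi^\tau Z)\le Ce^{-\gamma_\delta\tau}$, compare $\|\chi(Z)\|$ with $\mathfrak d_c(Z)$, shift time by $T_*$ so that $\|Y_Z\|_\beta<R$ and $\|P_{ss}Y_Z(0)\|<\varrho(\beta,R)$, and then invoke \Cref{prop:tilde L_R = all strong stable orbits}. The only difference is that you use the standing assumption that $\mathcal U_c(\delta_0)$ lies in the chart, whereas the paper enlarges $T_*$ to place the tail in a coordinate neighborhood; both are valid.
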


\begin{proof}
By \Cref{prop:outgoing zero-energy solutions locally exponentially converge},
there are constants $C>0$ and $\gamma_\delta\ge\beta$, independent of the
orbit, such that
\begin{equation}\label{eq:uniform estimate of strong stable flow}
\mathfrak d_c(\Phi^\tau(Z))
\leq C e^{-\gamma_\delta\tau}
\end{equation}
for $Z\in W_{loc}^{ss}(e_c^+;\delta)$ and $\tau\geq0$.
Choose a neighborhood on which $\chi$ is defined and a constant $K_\chi$
such that
\begin{equation}\label{eq:|chi Z| leq K d(Z,ec+)}
\|\chi(Z)\|\leq K_\chi\mathfrak d_c(Z).
\end{equation}
Taking $T_*$ sufficiently large ensures that the whole tail
$\Phi^{[T_*,+\infty)}(Z)$ lies in this coordinate neighborhood and that
\[
K_\chi C e^{-\gamma_\delta T_*}<R,
\qquad
\|P_{ss}\|K_\chi C e^{-\gamma_\delta T_*}
<\varrho(\beta,R).
\]
For such $T_*$, define
\[
Y_Z(\tau):=\chi(\Phi^{\tau+T_*}(Z)),
\qquad \tau\geq0.
\]
Then 
\begin{align*}
    Y'_Z(\tau) &= \frac{d}{d\tau}\chi\left(\Phi^{\tau+T_*}(Z)\right)\\
               &= d\chi|_{\Phi^{\tau+T_*}(Z)}\dfrac{d}{d\tau} \Phi^{\tau+T_*}(Z)\\
               &= d\chi|_{\Phi^{\tau+T_*}(Z)}\mathfrak{X}\left(\Phi^{\tau+T_*}(Z)\right)\\
               &= d\chi|_{\chi^{-1}(Y_Z(\tau))}\mathfrak{X}\left[\chi^{-1}\left(Y_Z(\tau)\right)\right]\\
               &=f\left(Y_Z(\tau)\right),
\end{align*}
The last equality follows from \eqref{eq:f is the pushforward}.
    We estimate $\|Y_Z\|_\beta$ and $\|P_{ss}Y_Z(0)\|$ as follows.
\begin{equation*}
    \begin{aligned}
        \|Y_Z\|_\beta & = \sup_{\tau\geq0}\mathrm{e}^{\beta\tau}\|Y_Z(\tau)\|\\
        &\leq K_\chi C\mathrm{e}^{-\gamma_\delta T_*} \sup_{\tau\geq0} \mathrm{e}^{-(\gamma_\delta - \beta) \tau}\\
        &\leq K_\chi C\mathrm{e}^{-\gamma_\delta T_*}\\
        &<R,
    \end{aligned}
\end{equation*}
and 
\begin{equation*}
    \|P_{ss}Y_Z(0)\| \leq \|P_{ss}\| K_\chi C\mathrm{e}^{-\gamma_\delta T_*} < \varrho(\beta,R).
\end{equation*}
By \Cref{prop:tilde L_R = all strong stable orbits},
$Y_Z(0)\in\widetilde L(\beta,R)$.  Since $Z$ is physical,
$\rho(\Phi^{T_*}(Z))\geq0$, and therefore
$\Phi^{T_*}(Z)\in\mathcal U_R^{ss}$.
\end{proof}

\begin{proposition}\label{prop:results of step 3 of proving W(delta) is submfd}
With $T_*$ as above,
\begin{equation}\label{eq:characterization of W_R^ss}
W_{loc}^{ss}(e_c^+;\delta)
=
\mathcal O_{T_*}(\delta)
\cap
\Phi^{-T_*}\bigl(\mathcal U_R^{ss}\cap\mathcal D_{T_*}^-\bigr).
\end{equation}
\end{proposition}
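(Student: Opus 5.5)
We prove the set identity \eqref{eq:characterization of W_R^ss} by establishing the two inclusions separately.

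\emph{Inclusion ``$\subset$''.} Let $Z\in W_{loc}^{ss}(e_c^+;\delta)$. The definition \eqref{eq:strong stable set near ec+} says that $\Phi^\tau(Z)$ exists and lies in $\mathcal U_c(\delta)$ for every $\tau\geq0$. Restricting to $[0,T_*]$ and using \eqref{eq:Uc by local size function}, we get $Z\in\mathcal D_{T_*}^+$ with $\mathfrak d_c(\Phi^\tau(Z))<\delta$ there. Hence $Z\in\mathcal O_{T_*}(\delta)$.

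Next, \Cref{prop:results of step 2 of proving W(delta) is submfd} gives $\Phi^{T_*}(Z)\in\mathcal U_R^{ss}$. The point $\Phi^{T_*}(Z)$ flows back to $Z$ in time $-T_*$, so $\Phi^{T_*}(Z)\in\mathcal D_{T_*}^-$. Applying $\Phi^{-T_*}$ then yields $Z\in\Phi^{-T_*}(\mathcal U_R^{ss}\cap\mathcal D_{T_*}^-)$.

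\emph{Inclusion ``$\supset$''.} Let $Z$ lie in the right-hand side and set $Z_1:=\Phi^{T_*}(Z)\in\mathcal U_R^{ss}$. This is well defined because $\Phi^{-T_*}$ is the inverse diffeomorphism of $\Phi^{T_*}$ between $\mathcal D_{T_*}^+$ and $\mathcal D_{T_*}^-$.

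By \Cref{prop:results of step 1 of proving W(delta) is submfd}, the forward orbit of $Z_1$ exists for all time, stays in $\mathcal U_c(\delta)$, and converges to $e_c^+$. Gluing with the segment $[0,T_*]$, the orbit of $Z$ exists for all $\tau\geq0$ and converges to $e_c^+$. On $[T_*,\infty)$ it lies in $\mathcal U_c(\delta)$.

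It remains to check that $\Phi^\tau(Z)\in\mathcal U_c(\delta)$ for $\tau\in[0,T_*]$. Membership in $\mathcal O_{T_*}(\delta)$ only gives $\mathfrak d_c<\delta$ there, so by \eqref{eq:Uc by local size function} we must also verify $\rho\geq0$ and $v\leq v_*$ on this interval. These are the only extra conditions, and I expect this bookkeeping to be the one point needing care.

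\emph{Sign of $\rho$.} $Z_1\in\mathcal U_R^{ss}$ has $\rho\geq0$, and the three sign classes of $\rho$ are flow invariant. Hence $\rho(\Phi^\tau(Z))\geq0$ for all $\tau$.

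\emph{Bound on $v$.} Along the whole orbit $v'=\tfrac12\|w\|^2\geq0$ by \eqref{eq:M3}, and $v(\Phi^\tau(Z))\to v_*$ as $\tau\to\infty$. So $v$ is nondecreasing with limit $v_*$, which gives $v\leq v_*$ at every time, including $\tau\in[0,T_*]$.

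Combining these, $\Phi^\tau(Z)\in\mathcal U_c(\delta)$ for all $\tau\geq0$ and $\Phi^\tau(Z)\to e_c^+$. Thus $Z\in W_{loc}^{ss}(e_c^+;\delta)$, which completes the identity.
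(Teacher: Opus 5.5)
Your proof is correct and follows the paper's argument essentially step for step. For the forward inclusion you use the definition of $W_{loc}^{ss}(e_c^+;\delta)$ together with the uniform shrinking $\Phi^{T_*}(W_{loc}^{ss}(e_c^+;\delta))\subset\mathcal U_R^{ss}$. For the reverse inclusion you combine $\mathfrak d_c<\delta$ on $[0,T_*]$ with invariance of the sign of $\rho$, monotonicity of $v$ toward $v_*$, and the forward-invariance of $\mathcal U_R^{ss}$ in $\mathcal U_c(\delta)$ on the tail. You also check explicitly that $\Phi^{T_*}(Z)\in\mathcal D_{T_*}^-$, which the paper leaves implicit.
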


\begin{proof}
If $Z\in W_{loc}^{ss}(e_c^+;\delta)$, then
$\Phi^\tau(Z)\in\mathcal U_c(\delta)$ for every $\tau\geq0$; hence
$Z\in\mathcal O_{T_*}(\delta)$.  \Cref{prop:results of step 2 of proving W(delta) is submfd} gives
$\Phi^{T_*}(Z)\in\mathcal U_R^{ss}$, proving the first inclusion.

Conversely, let $Z$ belong to the right-hand side of
\eqref{eq:characterization of W_R^ss}.  On $[0,T_*]$ the definition of
$\mathcal O_{T_*}(\delta)$ gives
$\mathfrak d_c(\Phi^\tau(Z))<\delta$.  Since
$\Phi^{T_*}(Z)\in\mathcal U_R^{ss}$, its $\rho$-coordinate is
nonnegative.  The sign of $\rho$ is invariant, so
$\rho(\Phi^\tau(Z))\geq0$ for $0\leq\tau\leq T_*$.  Also the future
orbit of $\Phi^{T_*}(Z)$ tends to $e_c^+$, and $v'=\|w\|^2/2\geq0$;
therefore $v(\Phi^{T_*}(Z))\leq v_*$.  Monotonicity then yields
$v(\Phi^\tau(Z))\leq v_*$ for $0\leq\tau\leq T_*$.  Thus
\eqref{eq:Uc by local size function} implies
\[
\Phi^\tau(Z)\in\mathcal U_c(\delta),
\qquad 0\leq\tau\leq T_*.
\]
For $\tau\geq T_*$, Step~1 applied to $\Phi^{T_*}(Z)$ gives the same
conclusion and convergence to $e_c^+$.  Hence
$Z\in W_{loc}^{ss}(e_c^+;\delta)$.
\end{proof}

In this step we first need a lemma.
\begin{lemma}\label{lem:D Phi^tau = e^{Atau}}
For every $\tau\in\mathbb R$ for which the flow is defined at the
equilibrium,
\[
D\Phi^\tau_{e_c^+}=e^{A\tau}.
\]\end{lemma}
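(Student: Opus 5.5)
The plan is to use the variational equation along the constant solution. Since $e_c^+$ is an equilibrium (\Cref{prop:ea+ is a equilibrium}), $\Phi^\tau(e_c^+)=e_c^+$ for every $\tau\in\mathbb R$, so the flow is in fact defined at the equilibrium for all times. Moreover $D\Phi^\tau_{e_c^+}$ is a linear endomorphism of $T_{e_c^+}\mathcal P$. We therefore work in the chart $\chi$, where the flow is conjugated to the flow $\psi^\tau$ of $Y'=f(Y)$ near $0$.

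\Cref{lem:tangent-space identification at ec+} gives $d\chi|_{e_c^+}=d\chi^{-1}|_0=I$. Hence, by the chain rule,
\[
D\Phi^\tau_{e_c^+}=d\chi^{-1}|_0\,D\psi^\tau_0\,d\chi|_{e_c^+}=D\psi^\tau_0,
\]
and the statement reduces to computing $D\psi^\tau_0$.

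For this, set $\Psi(\tau):=D\psi^\tau_0$. Since $f$ is $C^k$ with $k\ge3$, the standard theorem on differentiable dependence on initial data applies. It shows that $\Psi$ solves the variational equation
\[
\Psi'(\tau)=Df(\psi^\tau(0))\,\Psi(\tau),\qquad \Psi(0)=I.
\]
Because $\psi^\tau(0)=0$ and $Df(0)=A$ (\Cref{lem:f N are smooth}), this is the constant-coefficient system $\Psi'=A\Psi$ with $\Psi(0)=I$. Uniqueness of solutions of linear ODEs then yields $\Psi(\tau)=e^{A\tau}$ for all $\tau$. For negative $\tau$ one can argue the same way, or use $\Phi^{-\tau}=(\Phi^\tau)^{-1}$ together with $(e^{A\tau})^{-1}=e^{-A\tau}$.

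The only step needing care is the identification of tangent spaces. We must check that the linearization $A$ computed in coordinates $(\rho,\xi,W)$ is the same object acting on $T_{e_c^+}\mathcal P$. This is exactly what \Cref{lem:tangent-space identification at ec+} provides. We must also justify differentiating $\tau\mapsto D\psi^\tau_0$. Both points are routine given $C^k$ smoothness of $f$, so there is no real obstacle.
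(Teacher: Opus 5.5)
Your proof is correct and follows essentially the same route as the paper: both differentiate the flow along the constant orbit at $e_c^+$ and solve the variational equation $\eta'=A\eta$, $\eta(0)=\eta_0$, to get $e^{A\tau}$. The only difference is that you pass explicitly through the chart $\chi$ using \Cref{lem:tangent-space identification at ec+}, which makes precise the identification $D\mathfrak X(e_c^+)=A$ that the paper uses implicitly.
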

See \proofref{lem:D Phi^tau = e^{Atau}}{apdx:1}.

\begin{proof}
    
The set $\mathcal U_R^{ss}\cap\mathcal D_{T_*}^-$ is an
$m$-dimensional $C^k$ embedded submanifold with boundary of $\mathcal P$.
Since $\Phi^{-T_*}:\mathcal D_{T_*}^-\to\mathcal D_{T_*}^+$ is a
$C^k$ diffeomorphism, its inverse image is a $C^k$ embedded submanifold
with boundary.  Intersecting with the open set
$\mathcal O_{T_*}(\delta)$ and using
\eqref{eq:characterization of W_R^ss} proves that
$W_{loc}^{ss}(e_c^+;\delta)$ is an $m$-dimensional $C^k$ embedded
submanifold with boundary.

Both $\mathcal O_{T_*}(\delta)$ and $\mathcal D_{T_*}^-$ contain
$e_c^+$.  Therefore, using the results in
 \Cref{lem:D Phi^tau = e^{Atau}} and
  \Cref{prop:results of step 1 of proving W(delta) is submfd},
\begin{align*}
    T_{e_c^+} W_{loc}^{ss}(e_c^+;\delta) &= T_{e_c^+} \Phi^{-T_*}(\mathcal{U}_R^{ss})
    =D \Phi^{-T_*}_{e_c^+} T_{e_c^+} \mathcal{U}_R^{ss}\\
    &= \mathrm{e}^{-A T_*} E_c^{ss}
    =E_c^{ss}.
\end{align*}
The last equality follows from the $A$-invariance of $E_c^{ss}$.

\end{proof}

\section{Cone uniqueness}
\label{secn:cone-uniqueness}

\subsection{The graph representation and cone-confined uniqueness}

Based on the linearization at $e_c^+$ decomposition constructed in \Cref{thm:strong stable set is a submfd and its tangent space is Ecss}, 
we apply the stable manifold theorem to claim the following important local diffeomorphism.
Write $\mathcal{N} = [0,+\infty)\times \mathcal{E}_0$.
Consider the natural projection
\begin{align*}
        \pi:\mathcal{P}_0&\rightarrow \mathcal{N}\\
        (\rho,s,v,w)&\mapsto (\rho,s).
\end{align*}
Write 
$
W(\delta):=W_{\mathrm{loc}}^{ss}(e_c^+;\delta)
$
for brevity in this subsection.

The local inverse mapping theorem provides a graph over a relatively open neighborhood of
$(0,c)$. The additional inclusion below supplies the
injectivity needed for the subsequent uniqueness argument.
Given any $\delta\in(0,\delta_0)$, we have the following proposition.
\begin{proposition}[Local strong stable graph over a configuration neighborhood]
\label{prop:local graph near ec+}

There exist $\eta\in(0,\delta)$, and relatively open neighborhoods $\mathcal{W}$ 
of $e_c^+$ in $W(\delta)$ and $\mathcal{B}$ of $(0,c)$ in $\mathcal{N}$,
 such that the following statements hold.

\begin{enumerate}
        \item The restriction $\pi|_{\mathcal W}:\mathcal W\to\mathcal{B}$ is a $C^\infty$
diffeomorphism. 
\item There exists some $C^\infty$
map $G:\mathcal{B}\to X$ and $G(\rho,s)\in T_s\mathcal{E}_0$ such that
 \begin{equation}
\label{eq:rectangular-stable-graph-lift}
(\pi|_\mathcal{W})^{-1}(\rho,s)
=
\bigl(\rho,s,V(s,G(\rho,s)),G(\rho,s)\bigr),~~(\rho,s)\in\mathcal{B}.
\end{equation} 
Hence, $\mathcal{W}$
satisfies the graph representation
\begin{equation}
\label{eq:rectangular-stable-graph}
\mathcal{W}
=\bigl\{
(\rho,s,v,w):
(\rho,s)\in\mathcal{B},~
 w=G(\rho,s),~ v=V(s,G(\rho,s))
\bigr\},
\end{equation}

\item \begin{equation}
\label{eq:small-stable-plaque-in-injective-neighborhood}
W(\eta)\subset\mathcal W.
\end{equation}
In particular, $\pi$ is injective on
$W(\eta)$.
\end{enumerate}

\end{proposition}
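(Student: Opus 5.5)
The plan is to apply the inverse function theorem to $\pi$ restricted to $W(\delta)$ at $e_c^+$, and then use the uniform exponential shrinking of strong stable orbits to obtain the inclusion (3).

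\emph{Step 1: the differential of $\pi$ at $e_c^+$.} By \Cref{thm:strong stable set is a submfd and its tangent space is Ecss}, with $k$ arbitrary, $W(\delta)$ is an $m$-dimensional $C^\infty$ embedded submanifold with boundary $\{\rho=0\}$. Its tangent space at $e_c^+$ is
\[
E_c^{ss}=\{(\rho,\xi,\Lambda^-\xi)\}.
\]
In the chart $\chi$, and using \Cref{lem:tangent-space identification at ec+}, the differential $d\pi_{e_c^+}$ is $(\rho,\xi,W)\mapsto(\rho,\xi)$. Restricted to $E_c^{ss}$ this is the map $(\rho,\xi,\Lambda^-\xi)\mapsto(\rho,\xi)$, which is a linear isomorphism onto $\mathbb R\oplus T_c\mathcal E$. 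Both sides have dimension $m$.

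\emph{Step 2: local graph via the inverse function theorem.} I work with the two-sided manifold $\widetilde{\mathcal U}^{ss}_R$, which has no boundary, or equivalently with the Lyapunov--Perron parametrization $a\mapsto\chi^{-1}(a+h(a))$. The map $\pi\circ\chi^{-1}\circ(a+h(a))$ has invertible differential at $a=0$. The inverse function theorem then gives open neighborhoods on which it is a $C^\infty$ diffeomorphism. Since the center--unstable space has zero $\rho$-component, $\rho$ is preserved, so restricting to $\rho\ge0$ gives a diffeomorphism of manifolds with boundary. This yields $\mathcal W$ relatively open in $\mathcal U^{ss}_R\subset W(\delta)$ and $\mathcal B$ relatively open in $\mathcal N$.

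Because $W(\delta)$ is itself an embedded submanifold of dimension $m$ containing $\mathcal U_R^{ss}$, the set $\mathcal W$ is also relatively open in $W(\delta)$. The inverse has the form $(\rho,s)\mapsto(\rho,s,v,w)$. Put $G(\rho,s):=w$, which is $C^\infty$ and lies in $T_s\mathcal E_0$. All points satisfy $v>v_*-\delta>0$, so they lie on the outgoing branch, and hence $v=V(s,G(\rho,s))$. This proves (1) and (2).

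\emph{Step 3: the inclusion $W(\eta)\subset\mathcal W$.} This is the main obstacle, since a neighborhood in $W(\delta)$ need not contain a whole small local strong stable set. My first attempt uses the characterization in \Cref{prop:results of step 2 of proving W(delta) is submfd}, reversed in time.

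Let $Z\in W(\eta)\subset W(\delta)$. By definition its entire forward orbit lies in $\mathcal U_c(\eta)$. I rerun the proof of \Cref{prop:results of step 2 of proving W(delta) is submfd} with $T_*=0$. By \eqref{eq:|chi Z| leq K d(Z,ec+)}, the curve $Y_Z(\tau)=\chi(\Phi^\tau Z)$ satisfies $\|Y_Z(\tau)\|\le K_\chi\eta$ for all $\tau\ge0$. Using the constant $C$ of \Cref{prop:outgoing zero-energy solutions locally exponentially converge} for the shrunken neighborhood, we also get $\|Y_Z\|_\beta\le K_\chi C$, where $C$ is proportional to $\eta$ or to $\eta^{1/3}$.

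Here is the check to make. In that proposition the constants $C_1=\delta$, $C_4=\delta$, $C_2\sim(K\delta)^{1/3}$ were stated for the ball of radius $\delta$. Rerunning the argument with $\eta$ in place of $\delta$, they become $O(\eta^{1/3})$, and these tend to $0$ as $\eta\to0$. Hence for $\eta$ small we get $\|Y_Z\|_\beta<R'$ and $\|P_{ss}Y_Z(0)\|<\varrho(\beta,R')$ for any prescribed smaller $R'$.

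Then \Cref{prop:tilde L_R = all strong stable orbits} places $\chi(Z)$ in $\widetilde L(\beta,R')$. It moreover places $\chi(Z)$ at a parameter $a=P_{ss}\chi(Z)$ of norm at most $\|P_{ss}\|K_\chi\eta$. Choosing $\eta$ so small that this ball of parameters lies inside the domain of Step 2 gives $Z\in\mathcal W$.

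Injectivity of $\pi$ on $W(\eta)$ then follows from (1).
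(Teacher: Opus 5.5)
Your proposal is correct. Steps 1--2 agree with the paper up to a cosmetic difference. You apply the inverse function theorem to the boundaryless Lyapunov--Perron parametrization $a\mapsto\pi\chi^{-1}(a+h(a))$ and then restrict to $\rho\ge0$ using $\rho(a+h(a))=\rho(a)$. The paper instead applies the inverse mapping theorem for manifolds with boundary directly to $\pi|_{W(\delta)}$. That also spares it your extra claim that a relatively open subset of $\mathcal U_R^{ss}$ is relatively open in $W(\delta)$. The claim is true, since the inclusion is an injective immersion between $m$-manifolds whose boundaries both lie in $\{\rho=0\}$, but your dimension-count remark is terse.

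The real divergence is Step 3, and the obstacle you anticipate there does not exist. Because $W(\delta)$ is embedded, $\mathcal W=W(\delta)\cap\mathcal A$ for some open $\mathcal A\subset\mathcal P$ containing $e_c^+$. Taking $\tau=0$ in the definition gives $W(\eta)\subset\mathcal U_c(\eta)$, and trivially $W(\eta)\subset W(\delta)$. So once $\eta$ is small enough that $\mathcal U_c(\eta)\subset\mathcal A$, you get $W(\eta)\subset W(\delta)\cap\mathcal A=\mathcal W$. That is the paper's entire proof of (3). The smallness of $W(\eta)$ is measured in the ambient phase space, where it is automatic, so a relatively open neighbourhood of $e_c^+$ in $W(\delta)$ does contain $W(\eta)$ for small $\eta$.

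Your quantitative route is nonetheless valid. You rerun the decay estimates with $\eta$ in place of $\delta$ to get constants $O(\eta^{1/3})$. This gives $\|Y_Z\|_\beta<R$ without the time shift $T_*$, since the rates $\tfrac12(v_*-\eta)$ and $\tfrac16(2v_*-\eta)$ only improve as $\eta$ decreases, so $\beta$ remains admissible. The Lyapunov--Perron characterization then places $\chi(Z)$ at a parameter with $\|a\|\le\|P_{ss}\|K_\chi\eta$.

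What your route buys is an explicit admissible $\eta$ and an explicit description of $W(\eta)$ as a piece of the Lyapunov--Perron graph. What it costs is re-deriving the $\eta$-dependence of constants that the paper states only as depending on $U$ and $\delta$. That work is unnecessary for the proposition as stated.
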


\begin{proof}
Based on \Cref{thm:strong stable set is a submfd and its tangent space is Ecss}, the differential of $\pi$ at $e_c^+$ is
\begin{align*}
        d\pi|_{e_c^+}: E_c^{ss}&\rightarrow\mathbb{R}\oplus T_c\mathcal{E}\\
                   (\rho,\xi,w)&\mapsto (\rho,\xi).
\end{align*}
We claim that $d\pi|_{e_c^+}$ is a linear homomorphism. The linearity and surjection is obvious. 
Given arbitrary $(\rho,\xi)= 0$,
then $\Lambda^-\xi = 0$, the bijection follows.

Both $W(\delta)$ and $\mathcal{N}$ have dimension
$m$. Near $e_c^+$, the boundary of $W(\delta)$ is contained in
$\{\rho=0\}$, and the projection sends this boundary into
$\{\rho=0\}\times\mathcal E_0$. The inverse mapping theorem for manifolds with
boundary gives simply connected neighborhoods
$\mathcal W$ of $e_c^+$ and neighborhoods $\mathcal B$ of $(0,c)$, which are relatively open in $W(\delta)$ and $\mathcal{N}$ respectively,
 such that
$$
\pi|_{\mathcal W}:\mathcal W\longrightarrow\mathcal B
$$
is a $C^\infty$ diffeomorphism; see
\cite[Theorem~2.2.6]{MargalefRoigOuterelo1992}.

Since $\pi|_{\mathcal W}$ is a diffeomorphism.
The identity $\pi\circ \pi^{-1}|_{\mathcal{B}}=\id_{\mathcal{B}}$ implies that there are
$C^k$ component maps $\widehat v$ and $G$ with
$$
(\pi|_\mathcal{W})^{-1}(\rho,s)
=\bigl(\rho,s,\widehat v(\rho,s),G(\rho,s)\bigr),
\quad G(\rho,s)\in T_s\mathcal{E}, \quad
(\rho,s)\in \mathcal{B}.
$$
Because $(\pi|_{\mathcal{W}})^{-1}(\rho,s)\in \mathcal{W}\subset W(\delta)\subset\mathcal U_c(\delta)$ as $(\rho,s)\in \mathcal{B}$, one has
$
\widehat v(\rho,s)>v_*-\delta>0.
$
The zero-energy relation thus yields
\begin{equation*}
\widehat v(\rho,s)
=\sqrt{2U(s)-\lVert G(\rho,s)\rVert^2}
=V(s,G(\rho,s)),\quad (\rho,s)\in\mathcal{B}.
\end{equation*}
This proves \eqref{eq:rectangular-stable-graph-lift}, \eqref{eq:rectangular-stable-graph} and
the asserted diffeomorphism for $\mathcal W$.

It remains to obtain the inclusion
\eqref{eq:small-stable-plaque-in-injective-neighborhood}. Since $W(\delta)$ is
embedded, its topology is the subspace topology inherited from
$\mathcal P$. 
Thus there is a open neighborhood $\mathcal A$
of $e_c^+$ in $\mathcal P$ such that
$$
\mathcal W = W(\delta)\cap\mathcal A.
$$
Since sets $\mathcal U_c(\eta)$ shrink to $e_c^+$ as $\eta\downarrow0$,
choose $\eta\in(0,\delta)$ so that
$$
\mathcal U_c(\eta)\subset\mathcal A.
$$
The defining forward-confinement condition immediately implies
$$
W(\eta)\subset W(\delta).
$$
Consequently,
\begin{equation*}
W(\eta)\subset W(\delta)\cap\mathcal U_c(\eta)\subset W(\delta)\cap\mathcal A
=\mathcal W.
\end{equation*}
Injectivity follows by restriction of the injective map
$\pi|_{\mathcal W}$.
\end{proof}

The local strong-stable graph provides, for every configuration sufficiently
close to the ray $\mathbb R_{+}c$ at large radius, a distinguished candidate
initial velocity.  The graph representation alone, however, does not imply
that the corresponding orbit remains in the same prescribed rectangular
configuration neighborhood: a stable orbit may converge to an equilibrium
after first leaving a smaller neighborhood.  Thus, in order to obtain the
existence assertion in the first cone theorem---rather than only uniqueness
among motions that are already assumed to remain in the cone---one must prove
positive invariance of a suitably truncated cone on the graph.  The
proof does not use action minimization and therefore establishes
the required existence statement in the class of general $c$-parabolic
motions, not only for parabolic geodesic rays.

\begin{proposition}[Positive invariance of a truncated cone on the strong-stable graph]
\label{prop:positive-invariance-strong-stable-graph}
There exist $R_*, \delta_* > 0$ such that, for every $x_0 = r_0 s_0 \in \mathcal{C}_c(R_*, \delta_*)$, the Newtonian solution determined by the strong-stable graph
 is defined for all positive physical time, is a collision-free $c$-parabolic motion, and remains in 
$\mathcal{C}_c(R_*, \delta_*)$ for all $t \ge 0.$
\end{proposition}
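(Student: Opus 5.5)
The plan is to work entirely on the graph $\mathcal W$ of \Cref{prop:local graph near ec+}, in normal coordinates $s=\exp_c\xi$, with $g(\rho,\xi)=[(d\exp_c)_\xi]^{-1}G(\rho,\exp_c\xi)$. The goal is to find a small rectangle
\[
\mathcal Q=\{0\le\rho<\rho_1,\ \|\xi\|<\delta_*\}\subset\mathcal B
\]
that is forward invariant under the reduced flow restricted to the graph. Once that holds, every orbit starting on $\mathcal Q$ stays in $\mathcal W\subset W(\delta)$. It therefore converges to $e_c^+$, and \Cref{prop:McGehee orbit convergence to ea+ gives a parabolic motion} together with \Cref{prop:Equivalence between McGehee and Newton} turns it into a collision-free zero-energy $c$-parabolic motion; collisions are excluded because $\overline{B_{\mathcal E}(c,\delta_0)}\subset\mathcal E_0$. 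Setting $R_*=\rho_1^{-2}$ and noting that $\|\xi\|=d_{\mathcal E}(s,c)$, the set $\mathcal Q\cap\{\rho>0\}$ is exactly the McGehee image of $\mathcal C_c(R_*,\delta_*)$. Hence invariance of $\mathcal Q$ is exactly cone invariance in physical time, using the time change $dt/d\tau=r^{3/2}>0$ and $t\to\infty\Leftrightarrow\tau\to\infty$. The initial velocity is the one in \eqref{eq:smooth-initial-velocity-map}.

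\textbf{Radial step.} On $\mathcal W\subset\mathcal U_c(\delta)$ one has $v>v_*-\delta>v_*/2$ because $\delta<\delta_0<v_*/2$. Then \eqref{eq:M1} gives $\rho'\le-\frac{v_*}4\rho$, so $\rho$ is nonincreasing and the bound $\rho<\rho_1$ can never be violated.

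\textbf{Angular step (main obstacle).} Here the aim is the estimate $\langle\xi,g(\rho,\xi)\rangle\le-\kappa\|\xi\|^2$ on the rectangle. The tangent space of the graph at $e_c^+$ is $E_c^{ss}$ by \Cref{thm:strong stable set is a submfd and its tangent space is Ecss}, which forces $Dg(0,0)[\rho,\xi]=\Lambda^-\xi$. The $\rho$-derivative vanishes because $E_c^{ss}$ contains $(1,0,0)$ with $W=\Lambda^-0=0$.

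The key structural fact is $g(\rho,0)=0$ for all small $\rho$: the homothetic orbit $(\rho,c,v_*,0)$ is strong stable and stays in $\mathcal U_c(\eta)$, so \eqref{eq:small-stable-plaque-in-injective-neighborhood} and injectivity of $\pi$ put it on the graph. Consequently
\[
g(\rho,\xi)=\int_0^1\partial_\xi g(\rho,t\xi)\,dt\,[\xi],
\]
with $\partial_\xi g(\rho,t\xi)=\Lambda^-+O(|\rho|+\|\xi\|)$ by continuity of $Dg$. Since $\Lambda^-\le-\frac{v_*}2$, shrinking $\rho_1,\delta_*$ gives
\[
\langle\xi,g\rangle\le-\tfrac{v_*}4\|\xi\|^2=:-\kappa\|\xi\|^2 .
\]
Along an orbit on the graph, \eqref{eq:Taylor formula of M2} gives $\xi'=B(\xi)P_\xi$-transported $w$, which is $[(d\exp_c)_\xi]^{-1}w=g(\rho,\xi)$ up to how normal coordinates read $s'=w$. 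Hence
\[
\tfrac12(\|\xi\|^2)'=\langle\xi,g\rangle\le-\kappa\|\xi\|^2,
\]
so $\|\xi\|$ is nonincreasing and the orbit cannot leave $\{\|\xi\|<\delta_*\}$.

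The subtle point is that the graph is only known over $\mathcal B$. I would handle it with a continuity (first exit time) argument. Suppose the maximal interval on which the orbit stays on $\mathcal W$ with $\pi$-image in $\mathcal Q$ is finite. The monotonicity just proved keeps the projection inside a compact subset of $\mathcal B$. By invariance of $W(\delta)$ under the flow (its defining condition is forward invariant) and the local diffeomorphism property, the orbit continues on $\mathcal W$ beyond the endpoint, which is a contradiction.

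The remaining requirement is that $\overline{\mathcal Q}\subset\mathcal B$ be chosen with room to spare. Flow invariance of $W(\delta)$ together with $\mathcal W$ being relatively open then gives that the orbit stays on $\mathcal W$ as long as its projection stays in $\mathcal B$; here I would use that $\Phi^\tau(Z)\in W(\delta)$ and $\pi$ is a diffeomorphism near the image. The final choices are $\delta_*<\eta$ and $\rho_1$ small, so that $\mathcal Q$ sits inside $\pi(W(\eta))$-compatible region where injectivity holds.
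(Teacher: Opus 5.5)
Your proposal is correct and follows essentially the same route as the paper: the same rectangle with closure in $\mathcal B$, $g(\rho,0)=0$ from the homothetic orbit, $D_\xi g(0,0)=\Lambda^-\le -v_*/2$ giving $\langle\xi,g\rangle\le -\frac{v_*}{4}\|\xi\|^2$, and $\rho'\le -\frac{v_*}{4}\rho$. Your explicit first-exit argument (the limit point stays in $\mathcal W$ by continuity of $(\pi|_{\mathcal W})^{-1}$ and relative openness of $\mathcal W$ in the forward-invariant set $W(\delta)$) just spells out what the paper compresses into ``for as long as the configuration projection remains in the graph domain.''
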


\begin{proof}

For all sufficiently small $\rho\geq0$, the point
$(\rho,c,v_*,0)$
lies on the local strong-stable manifold.  Indeed, its
physical oribt is the homothetic orbit $\left(r_0^{3/2}+\frac{3}{2}v_*t\right)^{2/3}c$ and the corresponding
forward McGehee orbit is $\bigl(\rho e^{-v_*\tau/2},c,v_*,0\bigr),~\tau\geq0$
which converges exponentially to $e_c^+=(0,c,v_*,0)$.  
The initial state of \(\rho\) itself lies in the local graph representation $\mathcal W$.  The uniqueness of the graph value
over $(\rho,c)$ therefore gives
\begin{equation}
 G(\rho,c)=0
 \text{ for all sufficiently small }\rho>0.
 \label{eq:G-rho-c-zero}
\end{equation}
In a normal neighborhood of $c$ in $\mathcal{E}_0$, remember
\(
 s=\exp_c\xi,
  \xi\in T_c\mathcal{E}_0,
\)
one has
\[
 d_\mathcal{E}(\exp_c\xi,c)=\lVert\xi\rVert.
\]
Since $s'=w=G(\rho,s)$, differenciating $s=\exp_c\xi$ gives 
\(d\exp_c|_\xi \xi' = G(\rho,\exp_c\xi)\).
Define the reduced shape vector field in these fixed coordinates by
\begin{equation}
 g(\rho,\xi)
 :=\bigl(d\exp_c|_\xi\bigr)^{-1}
       G\bigl(\rho,\exp_c\xi\bigr).
 \label{eq:def-reduced-shape-field}
\end{equation}
Since $\mathcal W$ is a plaque of the locally positively invariant
strong-stable manifold, the full McGehee vector field is tangent to the graph
for as long as the configuration projection remains in the graph domain.  
Then the McGehee equation
$s'=w$ becomes
\begin{equation}
 \xi'=g(\rho,\xi).
 \label{eq:reduced-shape-equation}
\end{equation}
Let
\(
\widehat{G}(\rho,\xi) = P_\xi^{-1} G(\rho, \exp_c \xi).
\)
The graph being tangent to \(E_c^{ss}\) gives \(D_\xi \widehat{G}(0,0) = \Lambda^-,  D_\rho \widehat{G}(0,0) = 0\). We have
\(B(0) = I \, , \; \widehat{G}(0,0) = 0\), 
and since \(g = B \widehat{G}\), then
\[
D_\xi g(0,0) = (DB)_0 \widehat{G}(0,0) + B(0) D_\xi \widehat{G}(0,0) = \Lambda^-.
\]
Equation \eqref{eq:G-rho-c-zero} implies
\begin{equation}
 g(\rho,0)=0.
 \label{eq:g-rho-zero}
\end{equation}

Remember the strong-stable shape operator is
\[
 \Lambda^-
 =-\frac{v_*}{4}I
  -\sqrt{\frac{v_*^2}{16}I+H_c}\le -\frac{v_*}{2}I.
\]
Notice that this remains strict even in directions belonging to
$\ker H_c$.
Therefore, by continuity, we can choose a sufficiently small rectangle
\[
Q = [0, \rho_*) \times \{\|\xi\| < \delta_*\}
\]
whose closure contained in the domain $\mathcal{B}$ of the graph, such that
\begin{equation*}
        v(\rho, \xi) \geq \frac{v_*}{2}, 
 \bigl\langle \xi,D_\xi g(\rho,u\xi)\xi\bigr\rangle
 \leq -\frac{v_*}{4}\lVert \xi\rVert^2  \text{ for all } (\rho,\xi)\in Q \text{ and } u\in[0,1].
\end{equation*}
Along a graph orbit that remains in the above coordinate neighborhood, using $g(\rho,0)=0$ and $\displaystyle g(\rho,\xi)=\int_0^1 D_\xi g(\rho,\theta\xi)\xi\,d\theta$,
we obtain 
\begin{equation} \label{eq:inward-shape-estimate}
 \frac{d}{d\tau}\frac12\lVert\xi\rVert^2 = \langle\xi,g(\rho,\xi)\rangle
 \leq -\frac{v_*}{4}\lVert\xi\rVert^2\quad\text{for all}~~ (\rho,\xi)\in Q.
\end{equation}
On the other hand, $\rho'\le -\dfrac{v_*}{4}\rho$, hence by the Gronwall's inequality,
\begin{equation}
         0<\rho(\tau)
 \leq\rho(0)e^{-\frac{v_*}{4}\tau}<\rho_*,\quad
  d_\mathcal{E}(s(\tau),c) = 
 \lVert\xi(\tau)\rVert
 \leq e^{-\frac{v_*}{4}\tau}\lVert\xi(0)\rVert<\delta_*\quad \text{for all}~~ \tau\ge0.
 \label{eq:positive-invariance-estimates}
\end{equation}
Therefore, for every forward time, the orbit remains in the rectangle 
for any initial configuration $x_0 = r_0s_0$ such that $\rho_0 = r_0^{-1/2}\in(0,\rho_*),~ d_\mathcal{E}(s_0, c)<\delta_*$.

\Cref{prop:Equivalence between McGehee and Newton,prop:McGehee orbit convergence to ea+ gives a parabolic motion}
show that the reconstructed physical curve is a global collision-free
zero-energy $c$-parabolic solution.
Set $R_*:=\rho_*^{-2}$.  If
$x_0=r_0s_0\in\mathcal C_c(R_*,\delta_*)$, then
$\rho_0=r_0^{-1/2}<\rho_*$ and
$d_{\mathcal E}(s_0,c)<\delta_*$.  Formula
\eqref{eq:positive-invariance-estimates} gives
$r(t)>R_*$ and $d_{\mathcal E}(s(t),c)<\delta_*$ for every $t\geq0$.
As desired.
\end{proof}

\begin{remark}[The inward-pointing condition]
\label{rem:inward-pointing-condition}
Estimate~\eqref{eq:inward-shape-estimate} has a direct boundary
interpretation.  On the shape boundary
$\lVert\xi\rVert=\delta_*$ one has
\[
 \frac{d}{d\tau}\frac12\lVert\xi\rVert^2
 =\langle\xi,g(\rho,\xi)\rangle
 \leq -\frac{v_*}{4}\delta_*^2<0.
\]
Thus the reduced shape vector field is strictly transverse to that boundary
and points toward the interior; in particular, the shape distance from
$c$ cannot increase there.  Likewise, on the outer radial boundary
$\rho=\rho_*$,
\[
 \rho'=-\frac12\rho_*v
 \leq-\frac{v_*}{4}\rho_*<0.
\]
These two strict inequalities are precisely the inward-pointing conditions
that rule out a first exit from the radial-shape rectangle.
\end{remark}

For uniqueness, one also needs an arbitrary $c$-parabolic motion that is already
assumed to remain in a sufficiently remote and sufficiently narrow
configuration cone.  Its entire forward McGehee lift must remain in the
prescribed local phase-space neighborhood on which the strong-stable graph
and the injectivity of the configuration projection are available.  The argument is purely dynamical.

\begin{proposition}[Configuration confinement implies phase-space confinement]
\label{prop:configuration-phase-confinement}
Let
$\gamma(t)=r(t)s(t)$ be a collision-free zero-energy $c$-parabolic motion,
and let
$$
Z(\tau)=(\rho(\tau),s(\tau),v(\tau),w(\tau))
$$
be its McGehee lift. Given positive numbers,
$\delta_\rho$, $\delta_s$, $\delta_v$, and $\delta_w$ that less than $v_*$, there exist
$R>0$ and $\delta>0$ such that the following implication holds: if
$$
r(t)>R,
\qquad
d_{\mathcal E}(s(t),c)<\delta
\qquad\text{for every }t\geq0,
$$
then
\begin{equation}
\label{eq:phase-confinement-conclusion}
0\leq\rho(\tau)<\delta_\rho,
\qquad
d_{\mathcal E}(s(\tau),c)<\delta_s,
\qquad
0\le v_* - v(\tau)<\delta_v,
\qquad
\lVert w(\tau)\rVert<\delta_w
\end{equation}
for every $\tau\geq0$.
\end{proposition}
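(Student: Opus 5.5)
The plan is a compactness (contradiction) argument that uses only the $(s,v,w)$-part of the McGehee system, since \eqref{eq:M2}--\eqref{eq:M4} do not involve $\rho$. The $\rho$- and $s$-bounds in \eqref{eq:phase-confinement-conclusion} are immediate. Take $R>\delta_\rho^{-2}$, so that $\rho=r^{-1/2}<R^{-1/2}<\delta_\rho$. Take $\delta\le\delta_s$ (and $\delta<\delta_0$). Since $\tau\mapsto t(\tau)$ is an increasing bijection of $[0,\infty)$, the hypotheses hold for every $\tau\geq0$. By \Cref{prop: parabolic motions hit ea+ at infty} we have $v\uparrow v_*$, hence $0\le v_*-v$. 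It remains to make $\lVert w\rVert$ and $v_*-v$ uniformly small, and the main step is the bound on $\lVert w\rVert$.

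\emph{Step 1: uniform smallness of $w$.} I claim that for every $\epsilon>0$ there is $\delta>0$ such that $d_{\mathcal E}(s(\tau),c)<\delta$ for all $\tau\ge0$ forces $\sup_{\tau\ge0}\lVert w(\tau)\rVert\le\epsilon$. Suppose this fails. Then there are $\delta_n\to0$, $c$-parabolic motions with McGehee lifts $Z_n$ whose shapes stay in $B_{\mathcal E}(c,\delta_n)$, and times $\tau_n\ge0$ with $\lVert w_n(\tau_n)\rVert\ge\epsilon$. Translate time so that the bad time is $0$, and consider $(s_n,v_n,w_n)$ on $[0,\infty)$. The zero-energy relation gives $v_n^2+\lVert w_n\rVert^2=2U(s_n)\le 2\max_{\overline{B_{\mathcal E}(c,\delta_0)}}U$. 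Hence $(s_n,v_n,w_n)$ stays in a compact subset of the collision-free region, where the vector field is smooth and bounded. Arzelà--Ascoli then gives a subsequence converging in $C^1_{loc}([0,\infty))$ to a solution $(s,v,w)$ of \eqref{eq:M2}--\eqref{eq:M4}. But $d_{\mathcal E}(s_n,c)<\delta_n\to0$ gives $s\equiv c$, so $w=s'\equiv0$, which contradicts $\lVert w(0)\rVert=\lim\lVert w_n(0)\rVert\ge\epsilon$. (Equivalently, and without extracting limits: $\lVert D_\tau w\rVert\le K$ as in the proof of \Cref{prop:outgoing zero-energy solutions locally exponentially converge}. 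So if $\lVert w(\tau_0)\rVert\ge\epsilon$, then on a $\tau$-interval of length comparable to $\epsilon/K$ the velocity stays close to the fixed vector $w(\tau_0)$ after parallel transport. The shape then moves a distance $\gtrsim\epsilon^2/K$, which exceeds $2\delta$ once $\delta$ is small. I would present this quantitative version to get explicit constants.)

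\emph{Step 2: $v$ stays on the outgoing branch.} With $\lVert w\rVert\le\epsilon$ and $U(s)\ge U_{min}$, the energy relation gives $v^2=2U(s)-\lVert w\rVert^2\ge v_*^2-\epsilon^2$. So $|v|\ge\sqrt{v_*^2-\epsilon^2}>0$ for all $\tau\ge0$. Since $v$ is continuous and tends to $v_*>0$, it can never change sign, hence $v\ge\sqrt{v_*^2-\epsilon^2}$ throughout. Together with $v\le v_*$ this gives
\[
0\le v_*-v\le v_*-\sqrt{v_*^2-\epsilon^2}\le \epsilon^2/v_*.
\]
Choosing $\epsilon<\delta_w$ with $\epsilon^2/v_*<\delta_v$, and then $\delta$ from Step 1 (also $\delta\le\delta_s$), completes \eqref{eq:phase-confinement-conclusion}.

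The main obstacle is Step 1, specifically its uniformity over all motions and all $\tau\ge0$ including early times: a priori $v$ may be negative there and no decay estimate applies. The compactness argument avoids any sign information on $v$ because it uses only the bounded, $\rho$-independent shape dynamics. This is also why Step 2 must use the monotonicity of $v$ together with the gap $|v|\ge\sqrt{v_*^2-\epsilon^2}$ to exclude the collapsing branch $v\approx-v_*$.
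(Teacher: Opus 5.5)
Your proof is correct and has the same structure as the paper's. The paper also first forces $\|w\|$ to be uniformly small from shape confinement by a bounded-acceleration displacement argument. It then uses the zero-energy relation and the sign of $v$ to get $0\le v_*-v\le\|w\|^2/v_*$, and the $\rho$ and $s$ bounds are immediate.

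Your compactness argument for Step 1 is a valid non-quantitative substitute, and the quantitative version you sketch is the paper's argument. The paper works with the ambient acceleration $s''=\nabla_{\mathcal E}U(s)-\frac12 vw-\|w\|^2 s$, bounded using only the energy relation $v^2+\|w\|^2=2U(s)\le 2\max_{\overline{B_{\mathcal E}(c,\delta_0)}}U$, together with the fact that chord length is at most spherical distance. Working in ambient coordinates avoids the curvature bookkeeping your parallel-transport phrasing would need. Also, do not cite the constant $K$ from the exponential-decay proposition here: it presupposes $\|w\|<\delta$, which is what you are trying to prove.
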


\begin{proof}
We in this proof regard $s$ and $w$ as
$X$-valued curves and distinguish the
ordinary derivative $w'$ in $X$ from the covariant derivative $D_\tau w$
on $\mathcal E$.

Keep the prescribed tolerances $\delta_\rho,\delta_s,\delta_v,\delta_w$
fixed, and introduce the smaller auxiliary tolerance
\[
\widehat\delta_w
:=\frac12\min\left\{\delta_w,\frac{v_*}{2},\sqrt{v_*\delta_v}\right\}.
\]
Note that $U$ is smooth on a neighborhood of $B_\mathcal{E}(c,\delta_0)$. Set
\[
K:=\overline{B_\mathcal{E}(c,\delta_0)}
\quad
\overline U:=\max_{s\in K}U(s),
\quad
M:=\sqrt{2\overline U},
\quad
G_0:=\max_{s\in K}\|\nabla_{\mathcal E}U(s)\|,
\quad
L:=1+G_0+\frac32M^2.
\]
These constants depend only on the fixed neighborhood of $c$, not on the
motion under consideration. Put
\[
h:=\frac{\widehat\delta_w}{L}>0,
\]
and choose
\begin{equation}
\label{eq:configuration-aperture-choice}
0<\delta<\min\left\{\delta_s,\delta_0,
\frac{\widehat\delta_w^2}{4L}\right\}.
\end{equation}
Finally, choose
\begin{equation}
\label{eq:radial-threshold-choice}
R>\delta_\rho^{-2}.
\end{equation}

Let $\gamma(t)=r(t)s(t)$ satisfy the assumptions of the proposition and
remain in the configuration cone determined by $R$ and $\delta$ for every
$t\geq0$. 
The zero-energy relation yields, for every $\tau\geq0$,
\[
v(\tau)^2+\|w(\tau)\|^2=2U(s(\tau))\leq M^2,
\qquad
|v(\tau)|\leq M,
\qquad
\|w(\tau)\|\leq M.
\]
Recall from \eqref{eq:tangential projection of w'} that
\[
D_\tau w=w'+\|w\|^2s.
\]
Using the McGehee equation for $D_\tau w$, we therefore obtain the
ordinary, $X$-valued acceleration formula
\[
s''=w'
=\nabla_{\mathcal E}U(s)-\frac12vw-\|w\|^2s.
\]
In particular, since $\|s\|=1$,
\begin{equation}
\label{eq:confinement-ambient-acceleration}
\|s''(\tau)\|=\|w'(\tau)\|
\leq G_0+\frac12|v(\tau)|\|w(\tau)\|+\|w(\tau)\|^2
\leq G_0+\frac32M^2<L.
\end{equation}
This bound is uniform over all motions satisfying the configuration
confinement assumption.

We claim that
\begin{equation}
\label{eq:w-uniform-smallness}
\|w(\tau)\|<\widehat\delta_w<\delta_w
\qquad\text{for every }\tau\geq0.
\end{equation}
Suppose instead that $\|w(\tau_0)\|\geq\widehat\delta_w$ at some
$\tau_0\geq0$. Taylor's formula with integral remainder, applied to the
$X$-valued curve $s$, gives
\[
s(\tau_0+h)-s(\tau_0)
=h\,w(\tau_0)+\int_0^h(h-u)w'(\tau_0+u)\,du.
\]
By \eqref{eq:confinement-ambient-acceleration} and the choice of $h$,
\begin{equation}
\label{eq:confinement-forced-displacement}
\begin{aligned}
\|s(\tau_0+h)-s(\tau_0)\|
&\geq h\|w(\tau_0)\|-
\int_0^h(h-u)\|w'(\tau_0+u)\|\,du\\
&\geq h\widehat\delta_w-\frac12Lh^2
=\frac{\widehat\delta_w^2}{2L}.
\end{aligned}
\end{equation}
On the other hand, the ambient chord length is no greater than the
spherical distance. Hence configuration confinement implies
\[
\begin{aligned}
\|s(\tau_0+h)-s(\tau_0)\|
&\leq\|s(\tau_0+h)-c\|+\|s(\tau_0)-c\|\\
&\leq d_{\mathcal E}(s(\tau_0+h),c)
+d_{\mathcal E}(s(\tau_0),c)\\
&<2\delta<\frac{\widehat\delta_w^2}{2L},
\end{aligned}
\]
contradicting \eqref{eq:confinement-forced-displacement}. This proves
\eqref{eq:w-uniform-smallness}.

Combining the zero
energy constraint, minimality of $c$ and \eqref{eq:w-uniform-smallness} gives
\[
v(\tau)^2
=2U(s(\tau))-\|w(\tau)\|^2
>v_*^2-\widehat\delta_w^2>0.
\]
Thus $v(\tau)>0$ for all $\tau\geq0$, otherwise
\[
\dot r(t)=r(t)^{-1/2}v(\tau(t))<0
\qquad\text{for every }t\geq0,
\]
which would imply $r(t)\leq r(0)$, contradicting the $c$-parabolic condition
$r(t)\to+\infty$.  Consequently,
\[
\begin{aligned}
0\leq v_*-v(\tau)
&=\frac{v_*^2-v(\tau)^2}{v_*+v(\tau)}\\
&=\frac{\|w(\tau)\|^2-2\bigl(U(s(\tau))-U(c)\bigr)}
{v_*+v(\tau)}\\
&\leq\frac{\|w(\tau)\|^2}{v_*}
<\frac{\widehat\delta_w^2}{v_*}\le \delta_v.
\end{aligned}
\]
Finally, since $\dot{r} = r^{-1/2} v >0$,  the assumed radial and shape confinement and the choices of
$R$ and $\delta$ imply
\[
0<\rho(\tau)=r(t(\tau))^{-1/2}<R^{-1/2}<\delta_\rho,
\qquad
d_{\mathcal E}(s(\tau),c)<\delta<\delta_s.
\]
Together with \eqref{eq:w-uniform-smallness} and the bound for $v$, these
inequalities establish \eqref{eq:phase-confinement-conclusion} for every
$\tau\geq0$.
\end{proof}

We can now prove \Cref{thm:uniqueness-cone-confined-parabolic-motion}.
\begin{deferredproof}{thm:uniqueness-cone-confined-parabolic-motion}
    We have proved in \Cref{prop:positive-invariance-strong-stable-graph}
the existence and invariance assertions; the existing graph-injectivity
argument in \Cref{prop:local graph near ec+} proves uniqueness as follows.

 Retain $\eta\in(0,v_*)$ in \Cref{prop:local graph near ec+} such that $\pi$ is injective on $W(\eta)$.
     Then, by \Cref{prop:configuration-phase-confinement}, 
     increase $R_*$ and decrease $\delta_*$ in \Cref{prop:positive-invariance-strong-stable-graph}
     such that for all $c$-parabolic 
    motions whose radials and shapes confined by $\mathcal{C}_c(R_*,\delta_*)$, their McGehee lifts
    stay in $W(\eta)$.
    Given any $x_0 = r_0s_0\in \mathcal{C}_c(R_*,\delta_*)$, let  $\gamma_1,\gamma_2$ be two $c$-parabolic motions with 
    initial configuration $x_0$, write $Z_i(\tau) = \big(\rho_i(\tau), s_i(\tau), v_i(\tau), w_i(\tau)\big)~ (i=1,2)$ be their McGehee lifts.
    One obtains
\begin{equation*}
        \pi(Z_1(0)) = (\rho_1(0),s_1(0)) = (\rho_2(0),s_2(0)) = \pi(Z_2(0)),
\end{equation*}
thus by injectivity, $Z_1(0) = Z_2(0)$.

The physical velocities are given by 
\begin{equation*}
        \dot{x} = \dot{r}s + r\dot{s} = r^{-\frac{1}{2}}  (vs +w) = \rho(vs+w),
\end{equation*}
thus $\dot{\gamma}_1(0) = \dot{\gamma}_2(0)$.
Therefore, by the uniqueness of solutions of the ODE, 
\begin{equation*}
        \gamma_1(t) = \gamma_2(t),~t\geq0.
\end{equation*}
On the other hand, since $W(\eta)\subset \mathcal{W}$, $v=V(s,w)>0$.
By the graph representation \eqref{eq:rectangular-stable-graph},
\begin{equation*}
        \dot{x} = \rho(vs +w) = \rho\big(sV(s,G(\rho,s)) +G(\rho,s)\big),
\end{equation*}
we define the velocity map
\begin{equation*}
\mathcal V_c(r,s)
=
r^{-1/2}
\left[
V\left(s,G(r^{-1/2},s)\right)s
+
G(r^{-1/2},s)
\right].
\end{equation*}
$V$ and $G$ are jointly $C^\infty$ in $(\rho, s)$, then $\mathcal V_c(r,s)$ is $C^\infty$, which is exactly \eqref{eq:smooth-initial-velocity-map}. 
Therefore, for any $x_0=r_0s_0\in \mathcal{C}_c(R_*,\delta_*)$
 the physical
solution given by $(x_0, \mathcal{V}_c(r_0,s_0))$ is defined for all $t\geq0$, is the unique
$c$-parabolic that remains in $\mathcal C_c(R_*,\delta_*)$.

\end{deferredproof}

\subsection{Cone uniqueness for parabolic geodesic rays}
\label{subscn:parabolic-rays-nested-cone}

We remove the confinement hypothesis by using the radial velocity of an
actual minimizing ray. An action comparison makes $v_*-v(0)$ small.
The monotonicity of $v$ then forces the radius to increase, while the
zero-energy identity bounds the entire remaining shape length.
The elementary excess and comparison estimates below will also be used
in the global uniqueness argument.

Set $B(r):=2v_*\sqrt r$ and, for an absolutely continuous collision-free
curve $x=rs$ on a compact time interval, define
\[
 \mathcal R_{[a,b]}(x)
 :=\int_a^b L(x,\dot x)\,dt-B(r(b))+B(r(a)).
\]
When the endpoints are specified in McGehee time, this notation refers
to the corresponding physical arc.

\begin{lemma}[Renormalized excess and shape tails]
\label{lem:renormalized-endpoint finite-tail-excess shape-tail}
For every absolutely continuous collision-free curve $x=rs$,
\begin{equation}
 \label{eq:renormalized-density}
 \mathcal R_{[a,b]}(x)=\int_a^b
 \left[
 \frac12\left(\dot r-\frac{v_*}{\sqrt r}\right)^2
 +\frac12r^2\|\dot s\|^2
 +\frac{U(s)-U_{min}}{r}
 \right]dt\geq0.
\end{equation}
Along any zero-energy solution,
\begin{equation}
 \label{eq:endpoint-identity}
 \mathcal R_{[\tau_1,\tau_2]}
 =\left[2\sqrt r\,(v-v_*)\right]_{\tau_1}^{\tau_2}.
\end{equation}
For every $c$-parabolic motion, put $F:=v_*-v\geq0$. Its tail excess
is finite and satisfies
\begin{equation}
 \label{eq:total-excess-exact}
 \mathcal R_{[\tau,\infty)}=2\sqrt{r(\tau)}F(\tau)
 \longrightarrow0\qquad(\tau\to\infty).
\end{equation}
If $v(\tau)\geq0$, then
\begin{equation}
 \label{eq:shape-length-from-radial-defect}
 d_{\mathcal E}(s(\tau),c)
 \leq\int_\tau^\infty\|w(\sigma)\|\,d\sigma
 \leq\frac4{\sqrt{v_*}}\sqrt{F(\tau)}.
\end{equation}
In particular, at such times,
\begin{equation}
 \label{eq:shape-tail-estimate}
 d_{\mathcal E}(s(\tau),c)^2\sqrt{r(\tau)}
 \leq\frac8{v_*}\mathcal R_{[\tau,\infty)}.
\end{equation}
Thus $d_{\mathcal E}(s(\tau),c)^2\sqrt{r(\tau)}\to0$ along every
$c$-parabolic motion.
\end{lemma}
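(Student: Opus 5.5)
The plan is to verify the four assertions in order. Each one reduces to a short computation in either physical or McGehee time, with the exponential decay of \Cref{prop:outgoing zero-energy solutions locally exponentially converge} used for the limit.

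\emph{The density identity \eqref{eq:renormalized-density}.} Use $\|\dot x\|^2=\dot r^2+r^2\|\dot s\|^2$ and $U(x)=U(s)/r$ to write
\[
L=\tfrac12\dot r^2+\tfrac12 r^2\|\dot s\|^2+U(s)/r .
\]
Expanding the square and using $v_*^2/2=U_{min}$ gives
\[
\tfrac12\bigl(\dot r-v_*/\sqrt r\bigr)^2=\tfrac12\dot r^2-v_*\dot r/\sqrt r+U_{min}/r .
\]
Hence the integrand on the right of \eqref{eq:renormalized-density} equals $L-v_*\dot r/\sqrt r=L-\frac{d}{dt}B(r)$. Integrating gives the identity. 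Nonnegativity follows because each term is nonnegative, the last one by minimality of $U_{min}$ on $\mathcal E_0$.

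\emph{The endpoint identity \eqref{eq:endpoint-identity}.} Pass to McGehee time. On zero energy, $L=\|\dot x\|^2=r^{-1}(v^2+\|w\|^2)$, so $L\,dt=\sqrt r\,(v^2+\|w\|^2)\,d\tau$. From $r'=rv$ we get $(\sqrt r)'=\tfrac12\sqrt r\,v$. Combining this with \eqref{eq:M3} gives
\[
(2\sqrt r\,v)'=\sqrt r\,v^2+\sqrt r\,\|w\|^2,
\]
which is exactly the action density. Likewise $(2v_*\sqrt r)'$ is the derivative of $B(r(\tau))$. Subtracting yields \eqref{eq:endpoint-identity}.

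\emph{The tail excess \eqref{eq:total-excess-exact}.} Apply \eqref{eq:endpoint-identity} on $[\tau,T]$:
\[
\mathcal R_{[\tau,T]}=2\sqrt{r(\tau)}F(\tau)-2\sqrt{r(T)}F(T).
\]
By \Cref{prop: parabolic motions hit ea+ at infty}, $F\ge0$ and the lift converges to $e_c^+$. A late tail therefore lies in $\mathcal U_c(\delta)$, and \Cref{prop:outgoing zero-energy solutions locally exponentially converge} applies. It gives $\sqrt r=\rho^{-1}$ growing at most like a multiple of $e^{(v_*+\delta)\tau/2}$; for this, use $\rho'=-\frac12\rho v\ge-\frac12 v_*\rho$ for the lower bound on $\rho$. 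It also gives $F\le Ce^{-(2v_*-\delta)\tau/2}$. So $\sqrt{r(T)}F(T)\to0$ for small $\delta$. Letting $T\to\infty$, monotone convergence of the nonnegative integrand gives finiteness and the formula. The same exponential rates show $2\sqrt{r(\tau)}F(\tau)\to0$.

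\emph{The shape-length bound \eqref{eq:shape-length-from-radial-defect}.} This is the step needing the most care. Since $v$ is nondecreasing, $v(\sigma)\ge0$ for all $\sigma\ge\tau$. The capture identity then gives
\[
\|w\|^2=2(U(s)-U_{min})+(v_*+v)F\ge v_*F,
\]
and we also have $F'=-\frac12\|w\|^2$.
\begin{itemize}
\item Where $F>0$: $\|w\|\le\|w\|^2/\sqrt{v_*F}=-2F'/\sqrt{v_*F}$, so integrating gives $\int_\tau^\infty\|w\|\le\frac{4}{\sqrt{v_*}}\sqrt{F(\tau)}$, because $F\to0$.
\item If $F$ vanishes at some time: then $F\equiv0$ afterward by monotonicity, hence $w\equiv0$ there and that part contributes nothing.
\end{itemize}
Since $s(\sigma)\to c$, the distance is bounded by the remaining length: $d_{\mathcal E}(s(\tau),c)\le\int_\tau^\infty\|s'\|=\int_\tau^\infty\|w\|$.

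\emph{The final estimate \eqref{eq:shape-tail-estimate}.} Squaring the previous bound gives
\[
d_{\mathcal E}(s(\tau),c)^2\sqrt r\le \tfrac{16}{v_*}F\sqrt r=\tfrac{8}{v_*}\mathcal R_{[\tau,\infty)} .
\]
Since $v(\tau)\to v_*>0$, the condition $v\ge0$ holds on a late tail. Together with \eqref{eq:total-excess-exact}, this gives the final limit.
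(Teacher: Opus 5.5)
Your proposal is correct and matches the paper's proof essentially step for step: the same completion of the square, the same McGehee-time computation for the endpoint identity, the same $\sqrt F$ integration for the shape length (using $\|w\|^2\geq v_*F$ on the tail where $v\geq0$), and the same squaring. The only difference is how you show $2\sqrt r\,F\to0$: you combine the rates of \Cref{prop:outgoing zero-energy solutions locally exponentially converge} on a late tail with the bound $\rho'\geq-\tfrac12 v_*\rho$, whereas the paper uses the exact identity $E'=-\tfrac{v_*}{2}E-2\sqrt r\,(U(s)-U_{min})\leq-\tfrac{v_*}{2}E$ for $E=2\sqrt r\,F$, which gives decay along the whole motion without invoking that proposition.
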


\begin{proof}
Since $\langle s,\dot s\rangle=0$ and $U(rs)=r^{-1}U(s)$,
$$
L(x,\dot x)
=
\frac12\dot r^2
+\frac12r^2\lVert\dot s\rVert^2
+\frac{U(s)}{r}.
$$
Moreover, let $B(r) = \sqrt{8U_{min}r} = 2v_*\sqrt{r}$
$$
\frac{d}{dt}B(r)
=
\frac{v_*}{\sqrt r}\dot r.
$$
Completing the square gives
\begin{equation}\label{eq:completion-square-proof}
\begin{aligned}
    L(x,\dot x)-\frac{d}{dt}B(r(t))
    &=\left(\frac{1}{2}\dot{r}^2  - \sqrt{\frac{2U_{min}}{r}}\dot{r} + \frac{U_{min}}{r}\right) 
     + \frac{1}{2}r^2 \|\dot{s}\|^2 + \frac{U(s) - U_{min}}{r} \\
    &= \frac{1}{2}\left(\dot{r} - \sqrt{\frac{2U_{min}}{r}}\right) ^2 + 
     + \frac{1}{2}r^2 \|\dot{s}\|^2 + \frac{U(s) - U_{min}}{r}.
\end{aligned}
\end{equation}
Integration proves \eqref{eq:renormalized-density}. Its integrand is
nonnegative because $U(s)\geq U_{min}$ on $\mathcal E$.

Now assume that $\gamma$ is a zero-energy solution. Using
$dt=r^{3/2}d\tau$, $\dot r=r^{-1/2}v$, and
$\dot s=r^{-3/2}w$, formula \eqref{eq:renormalized-density} becomes
\begin{equation}
\label{eq:renormalized-tau-density}
\mathcal R_{[\tau_1,\tau_2]}(\gamma)
=
\int_{\tau_1}^{\tau_2}
\sqrt r
\left[
\frac12(v-v_*)^2
+\frac12\lVert w\rVert^2
+U(s)-U_{min}
\right]d\tau.
\end{equation}
The zero-energy relation and $v_*^2=2U_{min}$ give us
\begin{equation*}
    U(s) - U_{min}  = \frac{1}{2} \left(v^2 + \|w\|^2 - v_*^2\right),
\end{equation*}
which 
reduces the integrand to
$$
\sqrt r\bigl(v^2-v_*v+\lVert w\rVert^2\bigr).
$$
On the other hand, $r'=rv$ and
$v'=\dfrac12\lVert w\rVert^2$ yield
\begin{equation}
\label{eq:endpoint-derivative-proof}
\begin{aligned}
    \frac{d}{d\tau}\left(2\sqrt r\,(v-v_*)\right)
    &=2\left[\frac{1}{2}\sqrt{r} v(v-v_*) + \sqrt{r}v'\right]\\
    &=\sqrt r\bigl(v^2-v_*v+\lVert w\rVert^2\bigr).
\end{aligned}
\end{equation}
Integrating \eqref{eq:endpoint-derivative-proof} proves
\eqref{eq:endpoint-identity}.

For a $c$-parabolic motion, \Cref{prop: parabolic motions hit ea+ at infty} gives us $F\geq0$ and $\displaystyle\lim_{\tau\to+\infty}F = 0$.
Set $E:=2\sqrt r\,F$. The zero-energy identity gives
\[
 \|w\|^2=2(U(s)-U_{min})+(v_*+v)F,
\]
and hence the exact differential identity
\begin{equation}
 \label{eq:radial-defect-excess-decay}
 E'=\sqrt r\,(vF-\|w\|^2)
 =-\frac{v_*}{2}E-2\sqrt r\,(U(s)-U_{min})
 \leq-\frac{v_*}{2}E.
\end{equation}
Therefore $E\to0$. Letting the terminal time tend to infinity in
\eqref{eq:endpoint-identity} proves \eqref{eq:total-excess-exact}.

If $v(\tau)\geq0$, then $v(\sigma)\geq0$ for $\sigma\geq\tau$, and
$\|w\|^2\geq v_*F$ on that tail. Wherever $F>0$, we have
\[
 -\frac{d}{d\sigma}\sqrt F
 =\frac{\|w\|^2}{4\sqrt F}
 \geq\frac{\sqrt{v_*}}4\|w\|.
\]
Integration, together with $F\to0$ and $s\to c$, gives
\eqref{eq:shape-length-from-radial-defect}.
If $F$ reaches zero, monotonicity and $v\to v_*$ imply that
$F=0$, $w=0$ and $U(s)=U_{min}$ by the zero energy constraint thereafter, 
so $s=c$ by $s\to c$, the same conclusion holds.
Squaring \eqref{eq:shape-length-from-radial-defect} and using
\eqref{eq:total-excess-exact} yields \eqref{eq:shape-tail-estimate}.
Finally, $v\to v_*>0$, so this estimate applies on every sufficiently
late tail and proves the last assertion.
\end{proof}

\begin{lemma}[Shell adjustment]
\label{lem:shell-adjustment}

There are $\delta_{\mathrm{sh}}>0$ and $C_{\mathrm{sh}}>0$ such that,
for every $b>0$ and $d_{\mathcal E}(s,c)<\delta_{\mathrm{sh}}$, there
are collision-free free-time comparison curves from $bs$ to $2bc$,
and from $bc$ to $2bs$, each with excess at most
\[
 C_{\mathrm{sh}}d_{\mathcal E}(s,c)^2\sqrt b.
\]
\end{lemma}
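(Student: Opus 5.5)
The plan is to build both comparison curves explicitly from the excess density \eqref{eq:renormalized-density}. Along each curve the radial part will follow the homothetic law exactly, so that the first square vanishes. The shape part will move at constant speed along the minimizing spherical geodesic between $s$ and $c$. Since the density expresses $\mathcal R$ for arbitrary absolutely continuous collision-free curves, no dynamics is needed: only the two remaining terms $\tfrac12 r^2\|\dot s\|^2$ and $(U(s)-U_{min})/r$ have to be bounded.

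\emph{Choice of $\delta_{\mathrm{sh}}$ and the Taylor bound.} Take $\delta_{\mathrm{sh}}\le\delta_0$, smaller than the injectivity radius of $\mathcal E$ at $c$, so that $\exp_c$ is a diffeomorphism on $\{\|\xi\|<\delta_{\mathrm{sh}}\}$ and the geodesic ball $B_{\mathcal E}(c,\delta_{\mathrm{sh}})$ lies in $\mathcal E_0$. Since $c$ is a minimum of $U|_{\mathcal E_0}$, one has $d(U|_{\mathcal E})_c=0$. Taylor's formula with a uniform bound on the Hessian over the closed ball then gives a constant $C_U$ with $0\le U(\exp_c\xi)-U_{min}\le C_U\|\xi\|^2$ for $\|\xi\|<\delta_{\mathrm{sh}}$.

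\emph{The curve from $bs$ to $2bc$.} Let $\theta=d_{\mathcal E}(s,c)$ and $\xi_0=\exp_c^{-1}s$, so $\|\xi_0\|=\theta$. Define $r(t)=(b^{3/2}+\tfrac32 v_*t)^{2/3}$, which satisfies $\dot r=v_*/\sqrt r$, on $[0,T]$ with $T=\tfrac{2}{3v_*}(2^{3/2}-1)b^{3/2}$, so that $r(T)=2b$. Define $s(t)=\exp_c((1-t/T)\xi_0)$ and put $x=rs$. This curve is collision-free because $r\ge b>0$ and $s(t)\in B_{\mathcal E}(c,\delta_{\mathrm{sh}})\subset\mathcal E_0$. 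In \eqref{eq:renormalized-density} the first term vanishes. The second term uses $\|\dot s\|=\theta/T$ and $r\le 2b$:
\[
\int_0^T\tfrac12 r^2\|\dot s\|^2\,dt\le \frac{2b^2\theta^2}{T}=C_1\theta^2\sqrt b .
\]
The third term uses $d_{\mathcal E}(s(t),c)\le\theta$ and $r\ge b$:
\[
\int_0^T\frac{U(s)-U_{min}}{r}\,dt\le \frac{C_U\theta^2T}{b}=C_2\theta^2\sqrt b .
\]
Adding the two bounds gives excess at most $C_{\mathrm{sh}}\theta^2\sqrt b$ with $C_{\mathrm{sh}}=C_1+C_2$, which depends only on $v_*$ and $C_U$.

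\emph{The curve from $bc$ to $2bs$.} Use the same radial profile together with the reversed geodesic $s(t)=\exp_c((t/T)\xi_0)$. The identical estimates apply and give the same constant.

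All steps are routine. The only point needing care is that the constants are uniform in $b$; this is guaranteed by $T\asymp b^{3/2}$ and $b\le r\le2b$. Equivalently, one can note the scaling $x\mapsto\lambda x$, $t\mapsto\lambda^{3/2}t$, under which the excess scales by $\lambda^{1/2}$, and reduce to the case $b=1$.
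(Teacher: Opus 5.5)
Your proof is correct and follows the paper's construction: the homothetic radial law $\dot r=v_*/\sqrt r$ kills the radial square, the shape moves along the minimizing geodesic between $s$ and $c$, and the angular and potential terms are bounded separately using $b\le r\le 2b$ and the quadratic bound $U(s)-U_{min}\le C\,d_{\mathcal E}(s,c)^2$. The only difference is cosmetic: you parametrize the geodesic linearly in time $t$, whereas the paper parametrizes it linearly in $r$ via $s(r)=\sigma((r-b)/b)$. This leaves the $\theta^2\sqrt b$ scaling and the uniformity in $b$ unchanged.
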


\begin{proof}
Since $c$ is a critical point of $U|_{\mathcal E}$ and $U$ is smooth there,
choose $\delta_{\mathrm{sh}}\in(0,\delta_0)$ so that
there is a constant $C$ depending on $U$ and $\delta_{\mathrm{sh}}$ such that
\[
 0\leq U(s)-U_{min}\leq C\,d_{\mathcal E}(s,c)^2,\quad s\in B_\mathcal{E}(c,\delta_\mathrm{sh}).
\]
Write $\theta=d_{\mathcal E}(s,c)$. 
Given any $s\in B_\mathcal{E}(c,\delta_{\mathrm{sh}})$, let $\sigma:[0,1]\to\mathcal E$ be
 the constant-speed minimizing geodesic
from $s$ to $c$. Thus
$\sigma(0)=s,
\sigma(1)=c,
\lVert\sigma'(u)\rVert=\theta.$
For $b\leq r\leq2b$, set
$$
s(r)
=
\sigma\left(\frac{r-b}{b}\right).
$$
Then
$$
\left\lVert\frac{ds}{dr}\right\rVert
=
\frac{\theta}{b},
\qquad
d_{\mathcal E}(s(r),c)\leq\theta.
$$

Parametrize the comparison curve by $\dot r=v_*/\sqrt r$.
The physical duration for the parameter \(r\) ranging from \(b\) to \(2b\) should be
\(
\displaystyle T_b = \int_{b}^{2b} \frac{\sqrt{r}}{v_*}dr = \frac{2}{3 v_*} \left[ (2b)^{3/2} - b^{3/2} \right].
\)
The radial square in \eqref{eq:renormalized-density} vanishes, 
Moreover,
$$
dt=\frac{\sqrt r}{v_*}\,dr,
\qquad
\dot s
=
\frac{v_*}{\sqrt r}\frac{ds}{dr}.
$$

The shape contribution is therefore
\begin{equation}
\label{eq:shell-angular-cost}
\begin{aligned}
\int_0^{T_b}\frac12r^2\lVert\dot s\rVert^2\,dt
&=
\frac{v_*}{2}
\int_b^{2b}
r^{3/2}
\left\lVert\frac{ds}{dr}\right\rVert^2\,dr\\
&\leq
\frac{v_*}{2} \frac{\theta^2}{b^2}
\int_b^{2b}r^{3/2}\,dr\\
&=
C_1\theta^2 \sqrt b.
\end{aligned}
\end{equation}
The potential contribution satisfies
\begin{equation}
\label{eq:shell-potential-cost}
\begin{aligned}
\int_0^{T_b}\frac{U(s)-U_{min}}{r}\,dt
&=
\frac{1}{v_*}
\int_b^{2b}
\bigl(U(s(r))-U_{min}\bigr)r^{-1/2}\,dr\\
&\leq
\frac{C}{v_*}\theta^2
\int_b^{2b}r^{-1/2}\,dr\\
&=
C_2\theta^2\sqrt b.
\end{aligned}
\end{equation}
Adding \eqref{eq:shell-angular-cost} and
\eqref{eq:shell-potential-cost} proves the first assertion.

Reversing the shape interpolation while keeping the radial motion
outward proves the second assertion.
\end{proof}

A geodesic ray
minimizes the total branch excess among all stable $c$-parabolic solutions
with the same initial configuration.

\begin{proposition}[Global excess selection by free-time minimality]
\label{prop:global-excess-selection}
Let $\gamma$ be a $c$-parabolic geodesic ray, and let $\eta$ be a
$c$-parabolic solution with the same initial configuration.   Then
\begin{equation}
\label{eq:global-excess-selection}
\mathcal R_{[0,\infty)}(\gamma)
\leq
\mathcal R_{[0,\infty)}(\eta).
\end{equation}
In particular, if both $\gamma$ and $\eta$ are $c$-parabolic geodesic
rays, then their total excesses are equal.
\end{proposition}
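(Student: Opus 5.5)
We compare $\gamma$ with a competitor built from a long initial arc of $\eta$ and then glued back onto $\gamma$ far out. Fix $\epsilon>0$. By \Cref{lem:renormalized-endpoint finite-tail-excess shape-tail}, both tail excesses $\mathcal R_{[T,\infty)}(\gamma)$ and $\mathcal R_{[T,\infty)}(\eta)$ tend to $0$. Moreover $d_{\mathcal E}(s(T),c)^2\sqrt{r(T)}\to0$ along both motions, and along both the radius tends to infinity monotonically on late tails.

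\textbf{Step 1: choose the gluing radii.} Choose a large $T_\eta$ with $\eta(T_\eta)=b\,s_\eta$, where $d_{\mathcal E}(s_\eta,c)^2\sqrt b<\epsilon$. Since $r_\gamma(t)\to\infty$ continuously and is eventually increasing, we can choose $T_\gamma$ with $r_\gamma(T_\gamma)=2b$. Write $\gamma(T_\gamma)=2b\,s_\gamma$. For $b$ large the shape $s_\gamma$ lies near $c$, and $d_{\mathcal E}(s_\gamma,c)^2\sqrt{2b}<\epsilon$ as well.

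\textbf{Step 2: build the competitor.} The competitor runs along $\eta|_{[0,T_\eta]}$, then takes a shell adjustment from $bs_\eta$ to $2bc$, then a second shell adjustment from $2bc$ to $4bs_\gamma$. The second arc lands at radius $4b$, not $2b$, so we need care here. The clean fix is to pick $T_\gamma$ with $r_\gamma(T_\gamma)=4b$ and use the second form of \Cref{lem:shell-adjustment} with base radius $2b$: this goes from $2bc$ to $4bs_\gamma$. Both adjustments have excess $O(\epsilon)$ by \Cref{lem:shell-adjustment}.

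\textbf{Step 3: apply free-time minimality.} The concatenation is an absolutely continuous collision-free curve from $x_0$ to $\gamma(T_\gamma)$ with some positive total time. Free-time minimality of $\gamma|_{[0,T_\gamma]}$ gives
\[
\mathcal A_{[0,T_\gamma]}(\gamma)\le \mathcal A(\text{competitor}).
\]
Both curves have the same endpoints, so the boundary terms $B(r)$ cancel, and the same inequality holds for the excess $\mathcal R$. Excess is additive over concatenation, since the $B$-terms telescope. Hence
\[
\mathcal R_{[0,T_\gamma]}(\gamma)\le \mathcal R_{[0,T_\eta]}(\eta)+C\epsilon\le \mathcal R_{[0,\infty)}(\eta)+C\epsilon,
\]
where the last step uses that the excess integrand is nonnegative.

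\textbf{Step 4: pass to the limit.} Let $b\to\infty$, so $T_\gamma\to\infty$. Monotone convergence of the nonnegative integral gives $\mathcal R_{[0,\infty)}(\gamma)\le\mathcal R_{[0,\infty)}(\eta)+C\epsilon$. Then let $\epsilon\to0$. If $\eta$ is also a ray, applying the argument symmetrically gives equality.

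The main obstacle is the bookkeeping in Step 2. We must ensure the shell lemma applies, i.e.\ that both shapes lie within $\delta_{\mathrm{sh}}$ of $c$ at the chosen radii. We must also check that the radii match as required: the lemma's endpoints are $bs\to2bc$ and $bc\to2bs$, which is exactly why the second radius is chosen as $4b$. Finally, we must verify that the concatenated curve is collision-free and admissible for free-time comparison. Collision-freeness holds because the shell arcs stay on $\mathcal E_0$ near $c$ at positive radius.
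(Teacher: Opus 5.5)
Your argument is correct and is essentially the paper's proof. You glue an initial arc of $\eta$ to a late point of $\gamma$ through shell adjustments of vanishing excess, use free-time minimality of $\gamma$, cancel the boundary terms $B$, and pass to the limit using monotonicity of the nonnegative excess. The only difference is cosmetic: the paper allows $R_n\geq 4r_n$ and bridges $2r_nc$ to $(R_n/2)c$ with a zero-excess homothetic segment, whereas you choose $r_\gamma(T_\gamma)=4b$ exactly so the two shell adjustments meet directly.
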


\begin{proof}
Choose physical times $T_n\to+\infty$ along $\eta$ and write
$$
y_n:=\eta(T_n)=r_n s_n.
$$
Since $\eta$ is $c$-parabolic,
$
r_n\longrightarrow+\infty.
$
Choose physical times $S_n\to+\infty$ along $\gamma$ and write
$$
z_n:=\gamma(S_n)=R_n q_n,
$$
with
$
R_n\geq4r_n.
$

For all sufficiently large $n$, the endpoint shapes lie in the shell
ball $B_\mathcal{E}(c,\delta_{\mathrm{sh}})$.  We give a curve $\zeta_n$ joining $\eta(T_n)$ to $\gamma(S_n)$ by three segments.
The first segment is a shell adjustment from $r_ns_n$ to $2r_n c$. The second segment follows the homothetic parabolic motion
$s\equiv c,~\dot\varrho=\dfrac{v_*}{\sqrt\varrho}$, from $2r_nc$ to $(R_n/2)c$, it is outward because $R_n\geq4r_n$, and all three
terms in the excess vanish. The third one is again a shell adjustment from $R_n/2 c$ to $R_n q_n$.
Therefore, \Cref{lem:shell-adjustment,lem:renormalized-endpoint finite-tail-excess shape-tail} gives 
\begin{equation*}
\mathcal R(\zeta_n)
\leq
C_{\mathrm{sh}}
\left[
d_{\mathcal E}(s_n,c)^2\sqrt r_n
+
d_{\mathcal E}(q_n,c)^2\sqrt R_n
\right]
\end{equation*}
such that $\mathcal R(\zeta_n)\longrightarrow0$.

The concatenation
$
\eta|_{[0,T_n]}*\zeta_n
$
connects the common initial configuration to
$z_n=\gamma(S_n)$.  Since $\gamma$ is a free-time minimizer,
\begin{equation*}
A_L(\gamma|_{[0,S_n]})
\leq
A_L(\eta|_{[0,T_n]})
+
A_L(\zeta_n).
\end{equation*}
Let $r_0$ be the common initial radius.  Subtracting
$
B(R_n)-B(r_0)
$
from both sides and splitting the right-hand boundary terms yields
\begin{equation}
\label{eq:selection-truncated}
\mathcal R_{[0,S_n]}(\gamma)
\leq
\mathcal R_{[0,T_n]}(\eta)
+
\mathcal R(\zeta_n).
\end{equation}
The finite excess is monotone in its terminal time because its density is
nonnegative.  Letting $n\to\infty$ in
\eqref{eq:selection-truncated}  proves
\eqref{eq:global-excess-selection}.  If $\eta$ is also a geodesic ray,
interchanging the two rays gives the reverse inequality.
\end{proof}

\begin{proposition}[Radial monotonicity and configuration trapping]
\label{prop:configuration-trapping}
Fixing the constant \(\delta_{\mathrm{sh}}\) and \(C_{\mathrm{sh}}\) from \Cref{lem:shell-adjustment}.
There is $K>0$ such that every $c$-parabolic geodesic ray with
$\gamma(0)=r_0s_0$ and 
$\theta_0:=d_{\mathcal E}(s_0,c)<\min\left\{\delta_{\mathrm{sh}},\sqrt{v_*/C_{\mathrm{sh}}}\right\}$ satisfies
\[
v(\tau)\geq\frac{v_*}{2},\quad
r(t)\geq r_0,\quad
d_{\mathcal E}(s(t),c)\leq K\theta_0
\quad(t,\tau\geq0).
\]
Consequently, for every $0<\delta\leq\min\{\delta_{\mathrm{sh}},1\}$,
there exists $\varepsilon\in(0,\delta/2)$ such that every such ray
with $\theta_0<\varepsilon$ remains in
$\mathcal C_c(r_0/2,\delta)$ for all $t\geq0$.
\end{proposition}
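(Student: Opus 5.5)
The plan is to bound the total excess of the ray by a comparison curve, convert that into smallness of $F(0)=v_*-v(0)$, and then use monotonicity of $v$ together with the shape-length estimate of \Cref{lem:renormalized-endpoint finite-tail-excess shape-tail}.

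\emph{Step 1: comparison curve.} Let $\gamma$ be a $c$-parabolic geodesic ray from $r_0s_0$. Build a competitor $\eta$ from $r_0s_0$ as follows. First apply the shell adjustment of \Cref{lem:shell-adjustment} from $r_0s_0$ to $2r_0c$. Then follow the homothetic parabolic motion $s\equiv c$, $\dot\varrho=v_*/\sqrt\varrho$ forever. This is a $c$-parabolic curve whose total excess is at most $C_{\mathrm{sh}}\theta_0^2\sqrt{r_0}$, since the homothetic part has zero excess density. It is not a Newtonian solution, so \Cref{prop:global-excess-selection} cannot be quoted verbatim. Its proof, however, only uses that the competitor reaches radii $r_n\to\infty$ with shapes $s_n\to c$ and $d_{\mathcal E}(s_n,c)^2\sqrt{r_n}\to0$. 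Here the shapes are exactly $c$, so the same gluing argument applies: glue to $\gamma(S_n)$ by a homothetic segment followed by a shell adjustment. Using \eqref{eq:shape-tail-estimate} for $\gamma$ to kill the last shell cost, we get
\[
2\sqrt{r_0}\,F(0)=\mathcal R_{[0,\infty)}(\gamma)\leq C_{\mathrm{sh}}\theta_0^2\sqrt{r_0},
\]
with the equality coming from \eqref{eq:total-excess-exact} at $\tau=0$. Hence $F(0)\leq \tfrac12 C_{\mathrm{sh}}\theta_0^2$.

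\emph{Step 2: radial monotonicity.} Since $v'\geq0$, for all $\tau\geq0$ we have $v(\tau)\geq v(0)=v_*-F(0)\geq v_*-\tfrac12C_{\mathrm{sh}}\theta_0^2$. The hypothesis $\theta_0^2<v_*/C_{\mathrm{sh}}$ then gives $v(\tau)>v_*/2$. Because $\dot r=r^{-1/2}v>0$, the radius is nondecreasing, so $r(t)\geq r_0$.

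\emph{Step 3: shape trapping.} Since $v\geq0$ from time $0$, \eqref{eq:shape-length-from-radial-defect} applies at every $\tau\geq0$. The remaining shape length from $\tau$ onward is at most the remaining length from $0$, so
\[
d_{\mathcal E}(s(\tau),c)\leq d_{\mathcal E}(s(0),c)+\int_0^\infty\|w\|\,d\sigma\leq\theta_0+\frac{4}{\sqrt{v_*}}\sqrt{F(0)}.
\]
With the bound from Step 1 this is at most $K\theta_0$, where $K=1+4\sqrt{C_{\mathrm{sh}}/(2v_*)}$. Actually \eqref{eq:shape-length-from-radial-defect} alone already gives $d_{\mathcal E}(s(\tau),c)\leq\frac{4}{\sqrt{v_*}}\sqrt{F(\tau)}\leq\frac4{\sqrt{v_*}}\sqrt{F(0)}$, so either form works. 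The consequence follows by choosing $\varepsilon<\min\{\delta/2,\ \delta/K,\ \delta_{\mathrm{sh}},\ \sqrt{v_*/C_{\mathrm{sh}}}\}$, adjusting so that $K\varepsilon<\delta$ strictly. The ray then has $r(t)\geq r_0>r_0/2$ and $d_{\mathcal E}(s(t),c)<\delta$ for all $t$, so it stays in $\mathcal C_c(r_0/2,\delta)$. Collision-freeness is automatic, because the shape stays in $B_{\mathcal E}(c,\delta_0)\subset\mathcal E_0$ once $K\varepsilon<\delta_0$.

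\emph{Main obstacle.} The delicate point is Step 1: justifying the excess comparison against a non-solution competitor, and checking that the glued curves are admissible free-time competitors. The shell adjustment must be collision-free and requires $\theta_0<\delta_{\mathrm{sh}}$. The boundary terms $B$ must cancel correctly when passing from action to excess, and the truncated excess of $\gamma$ must converge monotonically to its full tail excess. I would first redo the proof of \Cref{prop:global-excess-selection} with $\eta$ replaced by this explicit curve, noting that only the endpoint asymptotics of $\eta$ were used there.
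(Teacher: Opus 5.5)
Your proposal is correct and is essentially the paper's argument. The paper compares $\gamma|_{[0,T]}$ directly with the three-piece curve: a shell adjustment to $2r_0c$, a homothetic segment to $(r(T)/2)c$, and a terminal shell adjustment to $\gamma(T)$. Your glued competitor reduces to exactly this curve, so free-time minimality applies to it directly and no modification of \Cref{prop:global-excess-selection} is needed. The remaining steps match the paper: $F(0)\le\frac12C_{\mathrm{sh}}\theta_0^2$ via \eqref{eq:total-excess-exact}, monotonicity of $v$, and $d_{\mathcal E}(s(\tau),c)\le\frac4{\sqrt{v_*}}\sqrt{F(0)}$, which gives $K=\max\{1,\sqrt{8C_{\mathrm{sh}}/v_*}\}$.
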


\begin{proof}
For sufficiently large physical time $t_0$ such that $r(T)>4r_0$ and $d_\mathcal{E}(s(T), c)<\delta_{\mathrm{sh}}$ for $T>t_0$, join $r_0s_0$ to
$\gamma(T)=r(T)s(T)$ by a shell adjustment from $r_0s_0$ to $2r_0c$, an outward
homothetic segment from $2r_0c$ to $(r(T)/2)c$, and a terminal shell adjustment
from $(r(T)/2)c$ to $r(T)s(T)$.
This is possible since $r(T)>4r_0$ and $s(T)$ lies in the $\delta_{\mathrm{sh}}$-shell ball.
The middle segment has zero excess. The excess estimate in \Cref{lem:shell-adjustment}, free-time excess minimality and the
cancellation of the endpoint corrections give
\[ 
 \mathcal R_{[0,T]}(\gamma)
 \leq C_{\mathrm{sh}}\left[
 \theta_0^2\sqrt{r_0}
 +d_{\mathcal E}(s(T),c)^2\sqrt{r(T)}
 \right].
\]
The terminal term tends to zero by \Cref{lem:renormalized-endpoint finite-tail-excess shape-tail}. Passing to the
limit and using the exact excess formula yields
\begin{equation}
 \label{eq:uniform-excess-bound}
 2\sqrt{r_0}\,(v_*-v(0))
 =\mathcal R_{[0,\infty)}(\gamma)
 \leq C_{\mathrm{sh}}\theta_0^2\sqrt{r_0}.
\end{equation}
Thus
\[
 0\leq F(0)=v_*-v(0)\leq\frac12C_{\mathrm{sh}}\theta_0^2,
\]
 which gives $v(0)>v_*/2$.
Since $v'\geq0$, the same lower bound holds for all subsequent times,
so $r'=rv>0$ and $r(t)\geq r_0$.
Moreover, $F(\tau)\leq F(0)$ and
\eqref{eq:shape-length-from-radial-defect} give
\[
 d_{\mathcal E}(s(\tau),c)
 \leq\frac4{\sqrt{v_*}}\sqrt{F(\tau)}
 \leq\sqrt{\frac{8C_{\mathrm{sh}}}{v_*}}\,\theta_0.
\]
Take $K:=\max\{1,\sqrt{8C_{\mathrm{sh}}/v_*}\}$ and choose
\[
 0<\varepsilon<\min\left\{
 \frac{\delta}{2},\ \sqrt{\frac{v_*}{C_{\mathrm{sh}}}},\
 \frac{\delta}{K}\right\}.
\]
Then $r(t)\geq r_0>r_0/2$ and
$d_{\mathcal E}(s(t),c)\leq K\theta_0<K\varepsilon<\delta$,
which proves the trapping assertion.
\end{proof}

We now prove \Cref{thm:uniqueness-cone-parabolic-geodesic-ray}.
Keep $R_*,\delta_*$ from
\Cref{thm:uniqueness-cone-confined-parabolic-motion}, and set
$\bar\delta:=\min\{\delta_*,\delta_{\mathrm{sh}},1\}$.
Choose $\varepsilon\in(0,\bar\delta/2)$ from
\Cref{prop:configuration-trapping}. If $r_0>2R_*$ and
$d_{\mathcal E}(s_0,c)<\varepsilon$, every $c$-parabolic geodesic ray
from $r_0s_0$ remains in $\mathcal C_c(r_0/2,\bar\delta)$,
hence in $\mathcal C_c(R_*,\delta_*)$.
The cone-confined uniqueness theorem therefore gives uniqueness,
without imposing a confinement hypothesis on the ray.

\section{Generic Uniqueness of Parabolic Geodesic Rays}
\label{sec:global-generic-uniqueness}

The aim of this section is to pass from the cone uniqueness established near
the outgoing parabolic equilibrium
to a global uniqueness statement valid for almost every collision-free
initial configuration.  The proof is entirely phase-space based.  Every
$c$-directed parabolic geodesic ray is first captured by the local strong
stable manifold; the local manifold is then propagated backwards by the
McGehee flow.  Sard's theorem removes the critical values of the resulting
configuration projections, while the remaining multiple regular sheets are
separated by their total renormalized excess.  Free-time minimality implies
that two geodesic rays starting from the same configuration have the same
branch excess, and the differential of the branch excess shows that such
ties lie on codimension-one hypersurfaces.

We adopt the same notation as in the last section.
Throughout this section, unless otherwise stated, Let $\mathcal{N}^+:=\mathcal{N}^+\cap\{\rho>0\}$ and use $Y = (\rho, s)$ to denote points in $\mathcal{N}^+$.
Note that the map 
$$
\Theta:\mathcal{N}^+\longrightarrow\Omega,
\qquad
\Theta(\rho,s)=\rho^{-2}s,
$$
is a smooth diffeomorphism. 
We now propagate the physical part of the local strong stable manifold
backwards.  The notation is chosen to agree with the flow domains already
introduced earlier. Fix a $\delta\in(0,\delta_0)$, set
\begin{equation*}
W_0
:=
W_{loc}^{ss}(e_c^+;\delta)
\cap\{\rho>0\}.
\end{equation*}
For every integer $n\geq 1$, define
\begin{equation*}
W_n
:=
\Phi^{-n}
\bigl(
W_0\cap\mathcal D_n^-
\bigr).
\end{equation*}

\begin{proposition}[Countable backward stable sheets]
\label{prop:countable-backward-stable-sheets}
Each $W_n$ is an $m$-dimensional $C^\infty$ embedded submanifold in $\mathcal P$. The initial McGehee phase point of every
$c$-parabolic geodesic ray belongs to
$$
\bigcup_{n=0}^{\infty}W_n.
$$
\end{proposition}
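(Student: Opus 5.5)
The proof splits into two parts: the manifold structure of each $W_n$, and a capture statement for the initial phase points of geodesic rays.

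\emph{Manifold structure.} For $n=0$, \Cref{thm:strong stable set is a submfd and its tangent space is Ecss} says that $W_{loc}^{ss}(e_c^+;\delta)$ is an $m$-dimensional $C^k$ embedded submanifold with boundary contained in $\{\rho=0\}$. Since $k$ is arbitrary, it is $C^\infty$. Intersecting with the open invariant set $\{\rho>0\}$ removes the boundary, so $W_0$ is an $m$-dimensional $C^\infty$ embedded submanifold without boundary.

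For $n\geq1$, the set $\mathcal D_n^-$ is open, so $W_0\cap\mathcal D_n^-$ is an open subset of $W_0$ and hence an embedded submanifold. By the flow lemma, $\Phi^{-n}:\mathcal D_n^-\to\mathcal D_n^+$ is a $C^\infty$ diffeomorphism between open subsets of $\mathcal P$. Its image of an embedded submanifold is therefore an embedded submanifold of the open set $\mathcal D_n^+$, and hence of $\mathcal P$. Dimension and smoothness are preserved. No boundary issues arise, because the sign of $\rho$ is invariant.

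\emph{Capture.} Let $\gamma$ be a $c$-parabolic geodesic ray and $Z(\tau)$ its McGehee lift, with $Z(0)$ its initial phase point. By \Cref{prop: parabolic motions hit ea+ at infty}, $Z(\tau)\to e_c^+$, and moreover $v\le v_*$ with $v\uparrow v_*$ and $\rho>0$ throughout. Hence there is a time $T\geq0$ with $Z(\tau)\in\mathcal U_c(\delta)$ for all $\tau\geq T$: take $T$ large enough that $\rho<\delta$, $d_{\mathcal E}(s,c)<\delta$, $v_*-v<\delta$ and $\|w\|<\delta$ on $[T,\infty)$, which uses only convergence to $e_c^+$ together with $0\le v_*-v$.

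Take $n:=\lceil T\rceil$. Then $Z(n)$ satisfies the defining conditions of $W_{loc}^{ss}(e_c^+;\delta)$: its forward orbit stays in $\mathcal U_c(\delta)$ and converges to $e_c^+$. Since also $\rho(n)>0$, we get $Z(n)\in W_0$.

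The orbit segment $Z([0,n])$ exists, since the lift is defined for all $\tau\ge0$. So $Z(n)\in\mathcal D_n^-$ and $\Phi^{-n}(Z(n))=Z(0)$, which gives $Z(0)\in W_n$.

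The only point needing care is that the lift is defined on all of $[0,\infty)$ in McGehee time. A collision-free motion defined for all $t\ge0$ has a lift with $\tau(t)$ a diffeomorphism onto $[0,\tau(\infty))$. The time-change discussion in Section~2 shows $\tau(t)\to\infty$, because $r$ is bounded below away from zero along a collision-free parabolic motion, so $\tau(\infty)=\infty$.

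Neither part is a real obstacle. The main subtlety is bookkeeping: $W_0$ excludes the boundary $\rho=0$, and physical orbits never reach it, so nothing is lost. Minimality of the ray is not used; the capture statement holds for every $c$-parabolic motion.
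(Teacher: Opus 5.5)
Your proof is correct and follows the paper's argument: the manifold structure of $W_n$ comes from openness of $\mathcal D_n^-$ and the diffeomorphism $\Phi^{-n}$, and capture comes from eventual entry of the McGehee lift into $W_0$ followed by flowing back an integer time. The only differences are that you choose the integer $n\ge T$ directly, where the paper first takes $\tau_*$ and then uses positive invariance, and that you spell out why the lift eventually lies in $\mathcal U_c(\delta)$ and why $W_0$ has no boundary.

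One small correction concerns your reason for $\tau(t)\to\infty$. It does not follow from the lower bound on $r$. It follows from the growth bound $r^{3/2}\le C(1+t)$, which comes from $|v|\le C$ via the zero-energy relation. The lower bound on $r$ is what gives the converse, $t(\tau)\to\infty$.
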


\begin{proof}
 By the previously established flow-domain
result, $\mathcal D_n^-$ is open in $\mathcal{P}$ and
$$
\Phi^{-n}:\mathcal D_n^-\longrightarrow\mathcal D_n^+
$$
is a $C^\infty$ diffeomorphism.  Thus
$W_0\cap\mathcal D_n^-$ is an submanifold in $\mathcal{P}$, and is also $m$-dimensional and  $C^\infty$. The 
image of $W_0\cap\mathcal D_n^-$ under $\Phi^{-n}$ is again an $m$-dimensional embedded submanifold in $\mathcal{P}$.
This proves the first assertion.

Let $Z_0$ be the initial phase point of a $c$-parabolic geodesic ray.
There is
$\tau_*\geq0$ such that
$$
Y_*:=\Phi^{\tau_*}(Z_0)\in W_0.
$$
Choose an integer $n\geq\tau_*$ and put
$$
Y_n:=\Phi^{n-\tau_*}(Y_*).
$$
The local strong stable manifold is positively invariant, so
$Y_n\in W_0$.  The backward orbit of $Y_n$ exists for time $n$ and
returns to $Z_0$, hence $Y_n\in\mathcal D_n^-$ and
$$
Z_0=\Phi^{-n}(Y_n)\in W_n.
$$
\end{proof}

The next proposition applies Sard's theorem separately to the countably
many backward sheets.  This avoids treating their global union as a single
embedded submanifold.

\begin{proposition}[Critical values and regular sheets]
\label{prop:regular-sheet-decomposition}
For $n\geq0$, let
$$
\pi_n:=\pi|_{W_n}:W_n\longrightarrow\mathcal{N}^+.
$$
There is a null set $\mathscr C_{\mathrm{crit}}\subset\mathcal{N}^+$ and a
countable family of open sets
$$
\mathcal V_i\subset W_{n(i)},
\qquad
U_i\subset\mathcal{N}^+,
\qquad
i\in\mathbb N,
$$
such that
$$
\pi_{n(i)}|_{\mathcal V_i}:\mathcal V_i\longrightarrow U_i
$$
is a $C^\infty$ diffeomorphism and the sets $\mathcal V_i$ cover all
regular points of all maps $\pi_n$, we call $U_i$ regular sheet.
Each sheet has a smooth section
\begin{equation}
\label{eq:regular-sheet-section}
Z_i:U_i\longrightarrow\mathcal V_i,
\qquad
Z_i(Y)=\bigl(\rho,s,v_i(Y),w_i(Y)\bigr),
\qquad
Y=(\rho,s).
\end{equation}
\end{proposition}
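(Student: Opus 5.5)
We apply Sard's theorem separately to each of the countably many smooth maps $\pi_n$, and use the inverse function theorem at every regular point. The section formula then follows from the fact that $\pi$ only forgets the momentum coordinates.

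\emph{Step 1: critical values.} Fix $n\geq0$. By \Cref{prop:countable-backward-stable-sheets}, $W_n$ is an $m$-dimensional $C^\infty$ embedded submanifold of $\mathcal P$, and it lies in $\{\rho>0\}$, since the sign of $\rho$ is invariant and $W_0\subset\{\rho>0\}$. Hence $\pi_n:W_n\to\mathcal N^+$ is a $C^\infty$ map between manifolds of the same dimension $m$. Here $\mathcal N^+=(0,\infty)\times\mathcal E_0$ is an open subset of $\mathbb R\times\mathcal E$, and $\Theta$ identifies it diffeomorphically with $\Omega$, so Lebesgue-null sets correspond. Since $W_n$ is second countable, Sard's theorem shows that the set $\mathscr C_n$ of critical values of $\pi_n$ has measure zero. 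We set $\mathscr C_{\mathrm{crit}}:=\bigcup_{n\geq0}\mathscr C_n$, which is a countable union of null sets and therefore null.

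\emph{Step 2: regular points.} Let $p\in W_n$ be a regular point, so that $d\pi_n|_p$ is an isomorphism. The inverse function theorem gives an open neighborhood $\mathcal V_p\subset W_n$ of $p$ and an open set $U_p=\pi_n(\mathcal V_p)\subset\mathcal N^+$ such that $\pi_n|_{\mathcal V_p}:\mathcal V_p\to U_p$ is a $C^\infty$ diffeomorphism. The set of regular points of $\pi_n$ is open in $W_n$, and $W_n$ is second countable (Lindel\"of). Therefore countably many of the $\mathcal V_p$ cover all regular points of $\pi_n$. Taking the union over $n$ and enumerating gives the countable family $(\mathcal V_i,U_i)$ with $\mathcal V_i\subset W_{n(i)}$.

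\emph{Step 3: the section.} Define $Z_i:=(\pi_{n(i)}|_{\mathcal V_i})^{-1}:U_i\to\mathcal V_i$. It is $C^\infty$ as a map into $W_{n(i)}$, hence into $\mathcal P$, because $W_n$ is embedded. Since $\pi(\rho,s,v,w)=(\rho,s)$, the identity $\pi\circ Z_i=\mathrm{id}_{U_i}$ forces the first two components of $Z_i(\rho,s)$ to be $(\rho,s)$. This gives \eqref{eq:regular-sheet-section}, where $v_i$ and $w_i$ are the remaining smooth components and $w_i(Y)\in T_s\mathcal E_0$.

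The only point requiring care is the measure-theoretic bookkeeping. Sard's theorem must be applied to each $\pi_n$ individually: the union $\bigcup_n W_n$ need not be an embedded submanifold, since backward sheets may accumulate or overlap. Countability of both the index $n$ and the covers from second countability is exactly what makes the exceptional set null. I do not expect any other obstacle.
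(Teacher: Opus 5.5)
Your proposal is correct and matches the paper's proof. Both apply Sard's theorem to each $\pi_n$ separately and take the countable union of critical-value sets. Both then use the inverse function theorem at regular points, with second countability giving a countable cover, and obtain the section as the inverse of the restricted projection. Your additional checks fill in details the paper leaves implicit: that $W_n\subset\{\rho>0\}$ by sign invariance, that the regular set is open, and that $\pi\circ Z_i=\mathrm{id}$ forces the first two components of $Z_i$ to be $(\rho,s)$.
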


\begin{proof}
Since $\dim W_n=m=\dim\mathcal{N}^+$,
a point $Z\in W_n$ is regular precisely when
$$
d\pi_n(Z):T_ZW_n\longrightarrow T_{\pi(Z)}\mathcal{N}^+
$$
is a linear isomorphism. 
Write $\mathrm{Crit}(\pi_n)$ as the set of critical points of $\pi_n$ in $W_n$.
By Sard's theorem, the critical-value set
$$
\pi_n\bigl(\Crit(\pi_n)\bigr)
$$
has measure zero in $\mathcal{N}^+$.  Therefore
$$
\mathscr C_{\mathrm{crit}}
:=
\bigcup_{n=0}^{\infty}
\pi_n\bigl(\Crit(\pi_n)\bigr) \subset\mathcal{N}^+.
$$
is a countable union of null sets and is itself null.

At every regular point, the inverse function theorem gives a neighborhood
on which $\pi_n$ is a diffeomorphism onto an open subset of $\mathcal{N}^+$.
Each $W_n$ is second countable, so its regular part admits a countable
subcover by such neighborhoods.  Taking the union over $n\in\mathbb N$
produces the stated countable family.  Formula
\eqref{eq:regular-sheet-section} is the inverse of the corresponding
restricted projection.
\end{proof}

Before comparing different sheets, we attach a scalar excess to each
branch.  The following statement applies to all stable $c$-parabolic
solutions on a sheet; no minimizing property is used.

\begin{lemma}[Branch excess and its endpoint formula]
\label{lem:branch-excess-endpoint}
For $Y=(\rho,s)\in U_i$, let $\gamma_i^Y$ be the physical solution starting from
$Z_i(Y)$.  Define
\begin{equation}
\label{eq:branch-excess-definition}
\mathcal{R}_i(Y)
:=
\mathcal R_{[0,+\infty)}(\gamma_i^Y)
\end{equation}
exists, is finite, and satisfies
\begin{equation}
\label{eq:branch-excess-explicit}
\mathcal{R}_i(Y)
=
2\rho^{-1}\bigl(v_*-v_i(Y)\bigr).
\end{equation}
In particular, $\mathcal{R}_i$ is $C^\infty$ on $U_i$.
\end{lemma}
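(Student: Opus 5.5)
The formula follows from the endpoint identity \eqref{eq:total-excess-exact} of \Cref{lem:renormalized-endpoint finite-tail-excess shape-tail}, once we know that $\gamma_i^Y$ is a $c$-parabolic motion. Smoothness then comes from the regularity of the section $Z_i$.

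\emph{Step 1: $\gamma_i^Y$ is $c$-parabolic.} Since $Z_i(Y)\in\mathcal V_i\subset W_{n(i)}=\Phi^{-n(i)}(W_0\cap\mathcal D_{n(i)}^-)$, the forward McGehee orbit of $Z_i(Y)$ exists on $[0,n(i)]$. At time $n(i)$ it lies in $W_0$. By the definition \eqref{eq:strong stable set near ec+} of the local strong stable set, the orbit then stays in $\mathcal U_c(\delta)$ and converges to $e_c^+$. The sign of $\rho$ is invariant, so $\rho>0$ along the whole orbit. Hence the orbit never meets the collision set on its forward half-line, since $s$ stays in $\mathcal E_0$ throughout the flow domain. \Cref{prop:Equivalence between McGehee and Newton,prop:McGehee orbit convergence to ea+ gives a parabolic motion} then show that the reconstructed physical curve $\gamma_i^Y$ is a global forward zero-energy solution with $r\to\infty$ and $s\to c$. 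That is, it is a $c$-parabolic motion.

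\emph{Step 2: the explicit formula.} Apply \eqref{eq:total-excess-exact} at $\tau=0$. The density in \eqref{eq:renormalized-density} is nonnegative, so $\mathcal R_{[0,T]}$ is monotone in $T$ and the improper integral defining $\mathcal R_{[0,+\infty)}(\gamma_i^Y)$ exists in $[0,\infty]$. By \eqref{eq:endpoint-identity},
\[
\mathcal R_{[0,\tau]}=2\sqrt{r(0)}\,F(0)-2\sqrt{r(\tau)}\,F(\tau).
\]
The last term is $E(\tau)$, and $E(\tau)\to0$ by \eqref{eq:radial-defect-excess-decay}. Therefore
\[
\mathcal R_i(Y)=2\sqrt{r(0)}\,\bigl(v_*-v_i(Y)\bigr),
\]
which is finite. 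Finally $\sqrt{r(0)}=\rho^{-1}$, giving \eqref{eq:branch-excess-explicit}. Note that physical time $t\to\infty$ is equivalent to $\tau\to\infty$ on this orbit, by the time-change discussion in Section~2, so the physical and McGehee excess tails agree.

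\emph{Step 3: smoothness.} By \Cref{prop:regular-sheet-decomposition}, $Z_i$ is a $C^\infty$ section on $U_i$, so $v_i$ is $C^\infty$. Also $\rho>0$ on $U_i\subset\mathcal N^+$, so $\rho^{-1}$ is smooth there. Hence $\mathcal R_i$ is $C^\infty$.

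The only delicate point is Step~1. One must check that membership in a backward sheet gives the full forward convergence to $e_c^+$ and stays in the physical, collision-free region. The natural worry is that on $[0,n(i)]$ the orbit might leave $\mathcal U_c(\delta)$. That is harmless: only existence of the flow on $[0,n(i)]$ (guaranteed by $\mathcal D^-_{n(i)}$) and convergence afterwards are needed, not confinement. The formula then uses nothing beyond \Cref{lem:renormalized-endpoint finite-tail-excess shape-tail}, and no minimizing property enters.
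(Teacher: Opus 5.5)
Your proposal is correct and follows the paper's proof essentially step for step. The sheet structure sends the orbit into $W_0$ after McGehee time $n(i)$, so $\gamma_i^Y$ is a collision-free $c$-parabolic motion; the tail identity \eqref{eq:total-excess-exact} at $\tau=0$ with $\sqrt{r(0)}=\rho^{-1}$ gives the formula; and smoothness comes from the section $Z_i$. Your extra remarks only make explicit what the paper leaves implicit: monotonicity of the excess, the equivalence of $t\to\infty$ and $\tau\to\infty$, and the fact that confinement on $[0,n(i)]$ is not needed.
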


\begin{proof}
Every point of $W_{n(i)}$ flows after finite McGehee time into $W_0$.
Consequently, $\gamma_i^Y$ is a collision-free zero-energy
$c$-parabolic solution, although it is not assumed here to be a geodesic
ray.
Along any zero-energy $c$-parabolic solution, \Cref{lem:renormalized-endpoint finite-tail-excess shape-tail}
 yields
$$
\mathcal{R}_i(Y)
=
2\sqrt{r(0)}\bigl(v_*-v(0)\bigr).
$$
For $Z_i(Y)=(\rho,s,v_i(Y),w_i(Y))$, one has
$\sqrt{r(0)}=\rho^{-1}$ and $v_i(Y) = v(0)$, which proves
\eqref{eq:branch-excess-explicit}.  Smoothness follows from the smoothness
of the regular-sheet section.
\end{proof}

The next proposition gives the differential of the renormalized excess on a
regular sheet in terms of the McGehee phase variables themselves. The
crucial point is that the derivative of the radial velocity component
$v_i$ can be eliminated by the linearized McGehee equations and the
strong-stable decay at infinity. 
The differential will later allow us to show that the coincidence set of the
renormalized excess associated with two distinct regular sheets is locally
a hypersurface.
\begin{proposition}[Differential of a branch excess]
\label{prop:branch-excess-differential}
For every regular sheet,
\begin{equation}
\label{eq:branch-excess-differential}
d\mathcal{R}_i
=
2\rho^{-2}(v_i-v_*) d\rho
-
\rho^{-1}\langle w_i, d s\rangle.
\end{equation}
Consequently, on $U_i\cap U_j$,
\begin{equation}
\label{eq:branch-difference-differential}
d(\mathcal{R}_i-\mathcal{R}_j)
=
2\rho^{-2}(v_i-v_j) d\rho
-
\rho^{-1}\langle w_i-w_j, d s\rangle.
\end{equation}
If $Z_i(Y)\neq Z_j(Y)$, then
$$
d(\mathcal{R}_i-\mathcal{R}_j)(Y)\neq0.
$$
\end{proposition}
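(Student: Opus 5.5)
The plan is to compute $d\mathcal R_i$ by a first-variation argument on truncated arcs and let the truncation time tend to infinity. This avoids differentiating $v_i$ directly in \eqref{eq:branch-excess-explicit}.

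\textbf{Setup.} Fix $Y\in U_i$ and a tangent vector $(\delta\rho,\delta s)$. Take a smooth curve $Y(\epsilon)$ in $U_i$ with this velocity, and let $Z(\epsilon)=Z_i(Y(\epsilon))$. This gives a smooth one-parameter family of zero-energy McGehee orbits $\Phi^\tau(Z(\epsilon))$. For fixed McGehee time $T$, write $t_\epsilon(T)$ for the corresponding physical time. The truncated excess
\[
\mathcal R_{[0,T]}(\epsilon)=\mathcal A_{[0,t_\epsilon(T)]}(\gamma^\epsilon)-B(r_\epsilon(T))+B(r_\epsilon(0))
\]
is smooth in $\epsilon$. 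By \eqref{eq:endpoint-identity} and \eqref{eq:total-excess-exact} it converges to $\mathcal R_i(Y(\epsilon))$.

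\textbf{First variation.} The standard free-time first variation formula for a solution of the Euler--Lagrange equation gives
\[
\frac{d}{d\epsilon}\mathcal A=\bigl[\langle\dot x,\delta x\rangle\bigr]_0^{T}-h\,\delta t .
\]
The energy term vanishes because $h=0$. This is exactly where zero energy is used: the variation of the physical duration costs nothing. To evaluate the boundary terms, insert $\dot x=r^{-1/2}(vs+w)$ and $\delta x=\delta r\,s+r\,\delta s$ with $\langle s,\delta s\rangle=0$, and subtract $B'(r)\delta r=v_*r^{-1/2}\delta r$. Each endpoint then contributes
\[
r^{-1/2}(v-v_*)\,\delta r+r^{1/2}\langle w,\delta s\rangle .
\]
At $\tau=0$ we have $r^{1/2}=\rho^{-1}$ and $\delta r=-2\rho^{-3}\delta\rho$. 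The initial endpoint enters with a minus sign, so its term is exactly
\[
2\rho^{-2}(v_i-v_*)\delta\rho-\rho^{-1}\langle w_i,\delta s\rangle .
\]
It remains to show that the terminal term tends to $0$ as $T\to\infty$, and to justify interchanging $d/d\epsilon$ with the limit. The latter should follow from uniform convergence of the derivatives, since $\mathcal R_{[T,\infty)}$ is given by the exact formula \eqref{eq:total-excess-exact}.

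\textbf{Main obstacle: the terminal term.} I expect this step to be the main difficulty. After time $n(i)$ the family lies in $W_0$, so by \Cref{thm:strong stable set is a submfd and its tangent space is Ecss} the variation $\delta Z(\tau)$ is a tangent vector to the strong stable manifold transported by the flow. Through the Lyapunov--Perron representation (\Cref{lem:regularity lemma for submanifold arguement for LR}), $\|(\delta\xi,\delta W)\|$ then decays like $e^{-\beta\tau}$ with $\beta>v_*/4$.

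Two quantities need control.

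\emph{The radial term.} From $\rho(\tau)=\rho(0)\exp(-\tfrac12\int_0^\tau v)$ we get
\[
\frac{\delta r}{r}=-\frac{2\delta\rho}{\rho}=-\frac{2\delta\rho(0)}{\rho(0)}+\int_0^\tau\delta v ,
\]
which stays bounded because $\delta v$ is integrable. The radial term equals $r^{1/2}(v-v_*)(\delta r/r)$. By \Cref{prop:outgoing zero-energy solutions locally exponentially converge}, $r^{1/2}F\lesssim e^{(v_*/2+\delta/2)\tau}e^{-(v_*-\delta/2)\tau}\to0$; alternatively, $E=2\sqrt r\,F\to0$ directly by \eqref{eq:radial-defect-excess-decay}.

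\emph{The angular term.} It is bounded by
\[
r^{1/2}\|w\|\,\|\delta s\|\lesssim e^{(v_*/2+\delta/2)\tau}\,e^{-(2v_*-\delta)\tau/6}\,e^{-\beta\tau}.
\]
The exponent is negative provided $\beta>v_*/6$ plus $O(\delta)$, and this holds since $\beta>v_*/4$ and $\delta$ is small. If these crude rates prove too tight, I would sharpen the decay of $w$ using the graph relation $w=G(\rho,s)$ with $G(\rho,c)=0$, which gives $\|w\|\lesssim d_{\mathcal E}(s,c)\lesssim e^{-\beta\tau}$.

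\textbf{Conclusion.} This yields \eqref{eq:branch-excess-differential}, and subtracting the formulas for two sheets gives \eqref{eq:branch-difference-differential}. For the nonvanishing claim, suppose $d(\mathcal R_i-\mathcal R_j)(Y)=0$. Testing against $\delta\rho\neq0$ gives $v_i=v_j$, since $\rho>0$. Testing against arbitrary $\delta s\in T_s\mathcal E$ gives $w_i=w_j$, because $w_i-w_j\in T_s\mathcal E$. Hence $Z_i(Y)=Z_j(Y)$, which proves the contrapositive.
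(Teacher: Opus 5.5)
Your argument is correct, but it reaches the formula by a different mechanism than the paper. The paper never differentiates the action. It introduces $Q=2\,\delta v-\langle w,\delta s\rangle$ and uses the linearized McGehee equations together with the differentiated energy relation to show $Q'=-\tfrac12 vQ$, so $Q/\rho$ is constant along the variation. It then kills $Q/\rho$ at infinity using $|\delta v|=O(e^{-2\beta\tau})$, $\rho^{-1}\le\rho(0)^{-1}e^{v_*\tau/2}$ and $\beta>v_*/4$. This gives $2\,dv_i=\langle w_i,ds\rangle$, and the formula follows by differentiating the explicit expression $\mathcal R_i=2\rho^{-1}(v_*-v_i)$.

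The two proofs are the same identity organized differently. One checks that $-Q(T)/\rho(T)$ equals your terminal boundary term plus $\partial_\epsilon\mathcal R_{[T,\infty)}$. You therefore get the conservation law for free from the free-time first variation, with $h=0$ killing the $\delta t$ term. Your route also makes transparent that $\mathcal R_i$ is a generating function for the sheet: its differential is the momentum $\rho(v_is+w_i)$ paired with $dx$, minus $dB$.

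The price is the limit interchange, which you leave at ``should follow''; the exact tail formula alone does not supply it. Estimating $\partial_\epsilon\mathcal R_{[T,\infty)}=\partial_\epsilon\bigl(2\sqrt r\,F\bigr)$ directly contains the term $-2\sqrt r\,\delta v$. Controlling it would require $\sqrt r\,\delta v\to0$, which is exactly the paper's $O(e^{-2\beta\tau})$ bound on $\delta v$.

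Your uniform-convergence route avoids that estimate, but only if your terminal-term bounds hold uniformly for $\epsilon$ near $0$. They do, for two reasons. The entry time $n(i)+T_*$ is common to the whole family. And $\epsilon\mapsto Y_{p(\epsilon)}$ is $C^1$ into $\mathscr X_\beta$, which gives uniform decay of $\delta s$ and uniform integrability of $\delta v$. Also $E_\epsilon(T)\le E_\epsilon(0)e^{-v_*T/2}$ with $E_\epsilon(0)$ continuous in $\epsilon$. The paper's argument works along the single base orbit and needs no such uniformity.

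One minor correction: use $v\le v_*$ to bound $\sqrt r\le\rho(0)^{-1}e^{v_*\tau/2}$, instead of your lossy $e^{(v_*/2+\delta/2)\tau}$. Then your crude angular exponent is $v_*/6+\delta/6-\beta<0$ for every $\delta<\delta_0<v_*/2$. The sharpened one, using $\|w\|\lesssim e^{-\beta\tau}$, is $v_*/2-2\beta<0$. Neither needs any further smallness of $\delta$.
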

See \proofref{prop:branch-excess-differential}{apdx:3}.

We can now isolate the only remaining regular multiple-branch set.

\begin{proposition}[Tie sets are null]
\label{prop:tie-sets-null}
For two regular sheets, define
\begin{equation}
\label{eq:tie-set-definition}
\mathscr Z_{ij}
:=
\left\{
Y\in U_i\cap U_j:
\mathcal{R}_i(Y)=\mathcal{R}_j(Y),
\quad
Z_i(Y)\neq Z_j(Y)
\right\}.
\end{equation}
Then $\mathscr Z_{ij}$ has measure zero in $\mathcal{N}^+$.  Consequently,
$$
\mathscr Z:=\bigcup_{i,j\in\mathbb N}\mathscr Z_{ij}
$$
is a null set.
\end{proposition}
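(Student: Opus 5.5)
The plan is to apply the implicit function theorem to the smooth function $D_{ij}:=\mathcal R_i-\mathcal R_j$ on the open set $U_i\cap U_j\subset\mathcal N^+$. By \Cref{lem:branch-excess-endpoint}, each $\mathcal R_i$ is $C^\infty$ on $U_i$, so $D_{ij}$ is $C^\infty$ on $U_i\cap U_j$. The set where the two sections differ,
\[
\mathcal D_{ij}:=\{Y\in U_i\cap U_j:\ Z_i(Y)\neq Z_j(Y)\},
\]
is open, since $Z_i$ and $Z_j$ are continuous sections into the Hausdorff manifold $\mathcal P$. The tie set is then $\mathscr Z_{ij}=D_{ij}^{-1}(0)\cap\mathcal D_{ij}$.

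At every $Y\in\mathcal D_{ij}$, \Cref{prop:branch-excess-differential} gives $dD_{ij}(Y)\neq0$. Thus $0$ is a regular value of $D_{ij}|_{\mathcal D_{ij}}$. By the regular value theorem, $\mathscr Z_{ij}$ is an embedded $C^\infty$ hypersurface of dimension $m-1$ in the $m$-dimensional manifold $\mathcal N^+$. Such a hypersurface has Lebesgue measure zero. One can see this locally: near each point, a chart straightens $\mathscr Z_{ij}$ into a coordinate hyperplane, and diffeomorphisms preserve null sets. Second countability then lets us cover $\mathscr Z_{ij}$ by countably many such charts, so the whole of $\mathscr Z_{ij}$ is null. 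Here "measure zero in $\mathcal N^+$" means null in smooth charts, equivalently null under $\Theta$ in $\Omega$.

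The final assertion follows because $\mathscr Z=\bigcup_{i,j}\mathscr Z_{ij}$ is a countable union of null sets, the index set being $\mathbb N\times\mathbb N$ by \Cref{prop:regular-sheet-decomposition}.

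The only substantive input is the nonvanishing differential at points where $Z_i(Y)\neq Z_j(Y)$, which is already supplied by \Cref{prop:branch-excess-differential}. That step rests on the following check. If $v_i\neq v_j$, the $d\rho$ coefficient of $dD_{ij}$ is nonzero. If $v_i=v_j$, then $w_i\neq w_j$, and since $w_i-w_j\in T_s\mathcal E$, the $ds$ component is nonzero. What remains is routine bookkeeping about openness and countability.
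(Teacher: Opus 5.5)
Your proof is correct and follows essentially the same route as the paper. The paper likewise obtains $d(\mathcal R_i-\mathcal R_j)\neq0$ on $\{Z_i\neq Z_j\}$ from the branch-excess differential formula, uses the implicit function theorem to get a codimension-one hypersurface, and concludes with second countability and the countability of sheet pairs. Your observation that $\{Z_i\neq Z_j\}$ is open, so that the regular value theorem applies on all of it at once, is only a tidier version of the paper's pointwise local argument.
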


\begin{proof}
At every $Y\in\mathscr Z_{ij}$,
Proposition~\ref{prop:branch-excess-differential} gives
$$
d(\mathcal{R}_i-\mathcal{R}_j)(Y)\neq0.
$$
The implicit function theorem therefore shows that, near $Y$, the level
set
$$
\mathcal{R}_i-\mathcal{R}_j=0
$$
is a $C^1$ hypersurface of codimension one.  Such a hypersurface has
measure zero in local coordinates.  Since $\mathcal{N}^+$ is second
countable, $\mathscr Z_{ij}$ is covered by countably many such coordinate
neighborhoods and is null.  The family of sheet pairs is countable, so
their union is also null.
\end{proof}

We finish here the proof of \Cref{thm:global-generic-uniqueness} by combining eventual capture, Sard's theorem, and the
variational selection principle.
\begin{proof}
Set
$$
\mathscr N_c
:=
\mathscr C_{\mathrm{crit}}\cup\mathscr Z.
$$
By Propositions~\ref{prop:regular-sheet-decomposition} and
\ref{prop:tie-sets-null}, this is a null set in $\mathcal{N}^+$.

Take
$$
Y=(\rho,s)\in\mathcal{N}^+\setminus\mathscr N_c
$$
and put
$$
x=\Theta(Y)=\rho^{-2}s.
$$
Suppose that two $c$-parabolic geodesic rays $\gamma_1,\gamma_2$ start
from $x$.   Since $Y$ is not a critical
value, the two points lie on regular sheets, say
$Z_i(Y), Z_j(Y).$
Applying \Cref{prop:global-excess-selection} in both
directions gives
$$
\mathcal{R}_i(Y)=\mathcal{R}_j(Y).
$$
If $Z_i(Y)\neq Z_j(Y)$, then
$$
Y\in\mathscr Z_{ij}\subset\mathscr N_c,
$$
contrary to the choice of $Y$.  Hence
$$
Z_i(Y)=Z_j(Y).
$$
The two rays therefore have the same initial McGehee velocity components,
and thus the same physical initial velocity
$$
\rho(vs+w).
$$
Uniqueness for the Newtonian initial-value problem implies
$$
\gamma_1=\gamma_2.
$$
Thus multiple rays can occur only over $\mathscr N_c$.

Finally, $\Theta$ and $\Theta^{-1}$ are smooth and locally Lipschitz on a
countable exhaustion by relatively compact coordinate sets.  Hence
$\Theta(\mathscr N_c)$ is a null set in $\Omega$.  By
\eqref{eq:multiple-ray-set},
$$
\mathfrak M_c\subset\Theta(\mathscr N_c),
$$
which proves the theorem.
\end{proof}

\appendix

\section{Proofs concerning the strong stable manifold}\label{apdx:1}

\begin{deferredproof}{lem:Duhamel's Formula}
    Set 
    \[
    z(\tau) = \mathrm{e}^{-A\tau} Y(\tau)
    \]
    and differentiate it, we obtain
    \[
    \begin{aligned}
    z'(\tau) &= -\mathrm{e}^{-A\tau} A Y(\tau) + \mathrm{e}^{-A\tau} Y'(\tau) \\
    &= -\mathrm{e}^{-A\tau} A Y(\tau) + \mathrm{e}^{-A\tau} \bigl(A Y(\tau) +  b(\tau)\bigr) \\
    &= \mathrm{e}^{-A\tau}  b(\tau).
    \end{aligned}
    \]
    Integrating gives
    \[
    z(\tau_2) - z(\tau_1) =  \int_{\tau_1}^{\tau_2} \mathrm{e}^{-A\sigma}  b(\sigma) \, d\sigma.
    \]
    Multiplying  $\mathrm{e}^{A\tau_2}$ yields the final formula
    \[
     Y(\tau_2) = \mathrm{e}^{A(\tau_2 - \tau_1)} Y(\tau_1) + \int_{\tau_1}^{\tau_2} \mathrm{e}^{A(\tau_2-\sigma)}  b(\sigma)  d\sigma. 
    \]
\end{deferredproof}

\begin{deferredproof}{lem:conjucation of flows}
    If we set
\[
Z = \chi^{-1}(Y),
\]
then \eqref{eq:f is the pushforward} becomes
\[
f(Y) = D\chi_Z \, \mathfrak{X}(Z).
\]
Since $\chi$ is a local diffeomorphism,
\[
D\chi_Y^{-1} = \bigl( D\chi_{\chi^{-1}(Y)} \bigr)^{-1}.
\]
Hence we have the needed relation between the original equation and the equation in coordinates $\chi$:
\begin{equation}\label{eq:the coordinate vector field relation}
    \mathfrak{X}(Z) = D\chi_{\chi(Z)}^{-1} \, f(\chi(Z)). 
\end{equation}

Remember $Y_a'(\tau) = f(Y_a(\tau))$, denote $\widehat{Z}_a(\tau) = \chi^{-1}(Y_a(\tau))$
and differentiating it with respect to $\tau$ and using the chain rule gives
\begin{equation*}
    \begin{aligned}
        \frac{d}{d\tau}\widehat{Z}_a(\tau) &= \frac{d}{d\tau}\chi^{-1}(Y_a(\tau))\\
                                           &= D\chi_{Y_a(\tau)}^{-1} \, Y_a'(\tau)\\
                                           &=D\chi_{Y_a(\tau)}^{-1} \, f(Y_a(\tau))
    \end{aligned}
\end{equation*}
Since $Y_a(\tau) = \chi(\widehat{Z}_a(\tau))$, recall \eqref{eq:the coordinate vector field relation} implies
\[
D\chi_{Y_a(\tau)}^{-1} \, f(Y_a(\tau)) = \mathfrak{X}(\widehat{Z}_a(\tau)).
\]
Consequently,
\[
 \widehat{Z}_a'(\tau) = \mathfrak{X}(\widehat{Z}_a(\tau)). 
\]
Thus $\widehat{Z}_a$ is a solution of the original McGehee equations.

On the other hand, since $\widehat{Z}_{a}(0)=\chi^{-1}(Y_a(0))=\chi^{-1}(a+h(a))=Z_{a}$,
Hence $\widehat{Z}_a$ satisfies the original initial value problem
\[
\begin{cases}
Z' = \mathfrak{X}(Z) \\
Z(0) = Z_a
\end{cases},
\]
which already has a solution $\tau\to\Phi^\tau (Z_a)$.
Therefore 
\begin{equation}\label{eq:widehat Z_a(tau) = Phi^tau(Z_a)}
    \widehat{Z}_a(\tau)=\Phi^\tau(Z_a)
\end{equation}
holds on the common interval of existence of the two solutions.
However, $\widehat{Z}_a(\tau) = \chi^{-1}(Y_a(\tau))$ is already defined for all $\tau \geq 0$ 
and satisfies the equation above.
Hence the maximal solution starting from $Z_a$ exists on $[0,\infty)$, 
and coincide with $\widehat{Z}_a$.
Therefore \eqref{eq:widehat Z_a(tau) = Phi^tau(Z_a)} holds for all $\tau \geq 0$.
Finally, for $\tau\geq0$
\begin{equation*}
\begin{aligned}
    \chi(\Phi^\tau(Z_a))&=\chi(\widehat{Z}_a(\tau))\\
    &=\chi\left(\chi^{-1}(Y_a(\tau))\right)\\&=Y_a(\tau),
\end{aligned}
\end{equation*}
as claimed.

\end{deferredproof}

\begin{deferredproof}{lem:D Phi^tau = e^{Atau}}
    Choose any $\eta_0\in T_{e_c^+}\mathcal{P}$, let $Z_\epsilon(0)$ be the local smooth curve
    in $\mathcal{P}$ such that 
    \begin{equation*}
        Z_0(0) = e_c^+, \quad \partial_\epsilon Z_\epsilon (0)|_{\epsilon = 0} = \eta_0.
    \end{equation*}
Write 
\begin{equation*}
    Z_\epsilon(\tau):=\Phi^\tau(Z_\epsilon(0)), \quad \eta(\tau):= \partial_\epsilon Z_\epsilon (\tau)|_{\epsilon = 0},
\end{equation*}
then 
\begin{equation*}
    Z_0(\tau)=\Phi^\tau(Z_0(0))=\Phi^\tau(e_c^+) = e_c^+,\quad \eta(0) = \eta_0,
\end{equation*}
and
\begin{equation}\label{eq:eta tau = D Phi^tau eta_0}
    \begin{aligned}
            \eta(\tau) &= \partial_\epsilon Z_\epsilon (\tau)|_{\epsilon = 0} = \partial_\epsilon\Phi^\tau(Z_\epsilon(0))|_{\epsilon = 0} \\
            &=D\Phi^\tau_{Z_\epsilon(0)}\partial_\epsilon Z_\epsilon(0)|_{\epsilon=0} = D\Phi^\tau_{e_c^+}\eta_0.
    \end{aligned}
\end{equation}
On the other hand, $Z_\epsilon'(\tau) = \mathfrak{X}(Z_\epsilon(\tau))$. We differentiate $Z_\epsilon(\tau)$:
\begin{align*}
    \eta'(\tau) & = \partial_\tau\partial_\epsilon Z_\epsilon (\tau)|_{\epsilon =0} 
    = \partial_\epsilon Z_\epsilon'(\tau)|_{\epsilon=0}
    = \partial_\epsilon\mathfrak{X}(Z_\epsilon(\tau))|_{\epsilon=0}\\
    & = D\mathfrak{X}(Z_\epsilon(\tau))\partial_\epsilon Z_\epsilon(\tau)|_{\epsilon=0}
      = D\mathfrak{X}(Z_0(\tau))\eta(\tau)
      = D\mathfrak{X}(e_c^+)\eta(\tau)\\
    &= A\eta(\tau).
\end{align*}
We have obtained the initial value problem of an ODE:
\begin{equation*}
\left\{
\begin{aligned}
\eta'(\tau) &= A\eta(\tau) \\
\eta(0) &= \eta_0 \\
\end{aligned}
\right.,
\end{equation*}
which has a unique solution $\eta(\tau) = \mathrm{e}^{A\tau}\eta_0$.
Combining \eqref{eq:eta tau = D Phi^tau eta_0} yields
\begin{equation*}
    D\Phi^\tau_{e_c^+} = \mathrm{e}^{A\tau}.
\end{equation*}

\end{deferredproof}

\section{Proofs concerning global uniqueness}\label{apdx:3}

\begin{lemma}[Decay of tangent variations on a stable sheet]
\label{lem:stable-sheet-tangent-decay}
Fix $\beta\in(v_*/4,\gamma_\delta)$.  Let
$\mathcal V\subset W_n$ be a $C^1$ sheet and let
$\epsilon\mapsto Z_\epsilon(0)\in\mathcal V$ be a $C^1$ curve.  Write
$Z_\epsilon(\tau) = \Phi^\tau\left(Z_\epsilon(0)\right) = (\rho_\epsilon(\tau),s_\epsilon(\tau),v_\epsilon(\tau),w_\epsilon(\tau))$ and 
$Z_0(\tau)=(\rho(\tau),s(\tau),v(\tau),w(\tau))$. Set
\[
 \eta(\tau)=\partial_\epsilon|_{\epsilon=0}s_\epsilon(\tau),
 \qquad
 \omega(\tau)=\nabla_{\partial\epsilon}|_{\epsilon=0}w_\epsilon(\tau).
\]
There are $T$ and $C$ (depending on the curve and the sheet, but not on
$\tau$) such that
\[
 d_{\mathcal E}(s(\tau),c)+\|w(\tau)\|+
 \|\eta(\tau)\|+\|\omega(\tau)\|
 \le C e^{-\beta\tau},
 \qquad \tau\ge T.
\]
\end{lemma}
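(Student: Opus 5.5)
The plan is to push the curve forward until it lies in the local Lyapunov--Perron plaque, and then read off both the orbit decay and the variation decay from the fixed-point construction of Section~\ref{secn:strong stable manifold and its tangent space at ec+}.

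\textbf{Transfer to the plaque.} Since $\mathcal V\subset W_n=\Phi^{-n}(W_0\cap\mathcal D_n^-)$, every $Z_\epsilon(0)$ reaches $W_0\subset W^{ss}_{loc}(e_c^+;\delta)$ at McGehee time $n$. By \Cref{prop:results of step 2 of proving W(delta) is submfd}, at time $n+T_*$ it lies in $\mathcal U_R^{ss}$. Set $T:=n+T_*$. The flow map $\Phi^{T}$ is a $C^\infty$ diffeomorphism on $\mathcal D_T^+$, so $\epsilon\mapsto \Phi^{T}(Z_\epsilon(0))$ is a $C^1$ curve in $\mathcal U_R^{ss}$. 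Its coordinates
\[
\chi\bigl(\Phi^{T}(Z_\epsilon(0))\bigr)=a_\epsilon+h(a_\epsilon)
\]
have $a_\epsilon:=P_{ss}\chi(\Phi^T(Z_\epsilon(0)))$, which is $C^1$ in $\epsilon$ with values in $B_{\varrho(\beta,R)}$.

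\textbf{Decay along the orbit and its variation.} By \Cref{lem:conjucation of flows},
\[
\chi\bigl(Z_\epsilon(T+\sigma)\bigr)=Y_{a_\epsilon}(\sigma)\quad\text{for }\sigma\ge0.
\]
\Cref{lem:regularity lemma for submanifold arguement for LR} shows that $a\mapsto Y_a\in\mathscr X_\beta$ is $C^k$. Hence
\[
\partial_\epsilon|_{\epsilon=0}Y_{a_\epsilon}=DY_{a_0}[\dot a_0]\in\mathscr X_\beta,
\]
and so
\[
\|\partial_\epsilon Y_{a_\epsilon}(\sigma)\|\le C'e^{-\beta\sigma}.
\]
Here we use that evaluation at $\sigma$ is a bounded linear map on $\mathscr X_\beta$ with norm $e^{-\beta\sigma}$, so it commutes with the derivative. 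Likewise
\[
\|Y_{a_0}(\sigma)\|\le Re^{-\beta\sigma}
\]
by \Cref{prop:Y_a is a solution of the system in new coordinates}.

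\textbf{Passing back through $\chi^{-1}$.} Write $Y=(\rho,\xi,W)$. Then $s=\exp_c\xi$, $w=P_\xi W$, and $d_{\mathcal E}(s,c)=\|\xi\|$, $\|w\|=\|W\|$. This bounds the first two terms by $Re^{-\beta(\tau-T)}$. For the variations, the chain rule gives
\[
\eta=(d\exp_c)_\xi\,\partial_\epsilon\xi .
\]
The covariant derivative $\nabla_{\partial\epsilon}(P_{\xi_\epsilon}W_\epsilon)$ equals $P_\xi\,\partial_\epsilon W$ plus a term bilinear in $\partial_\epsilon\xi$ and $W$, whose coefficient is smooth and bounded on $\overline B_R$. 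Since $\xi,W$ are bounded, both $\|\eta\|$ and $\|\omega\|$ are at most a constant times $\|\partial_\epsilon Y_{a_\epsilon}(\tau-T)\|$. Absorbing $e^{\beta T}$ into $C$ yields the claim for $\tau\ge T$.

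\textbf{Main obstacle.} The delicate point is justifying that the $\epsilon$-derivative of $Y_{a_\epsilon}(\sigma)$ inherits the weighted bound uniformly in $\sigma$, rather than merely pointwise. The remedy is to differentiate the fixed-point identity $Y_a=\mathcal T_a(Y_a)$ in $a$:
\[
DY_a=(I-D_Y\mathcal T_a)^{-1}\partial_a\mathcal T_a .
\]
The factor $(I-D_Y\mathcal T_a)^{-1}$ is bounded by $(1-q(1,R,\beta))^{-1}$ via \Cref{lem:estimate of D^n_YT_a(Y)}. Also $\|\partial_a\mathcal T_a[\dot a]\|_\beta=\|e^{A\sigma}\dot a\|_\beta\le M\|\dot a\|$, using $\beta<\eta_*$. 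Together these give the $\mathscr X_\beta$ estimate directly, with $C$ depending on the curve through $\|\dot a_0\|$ and on the sheet through $T$.
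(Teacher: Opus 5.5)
Your proposal is correct and follows essentially the same route as the paper's proof. Both push the curve to time $T=n+T_*$ inside $\mathcal U_R^{ss}$ and invoke \Cref{lem:conjucation of flows} to write $\chi(Z_\epsilon(T+\sigma))=Y_{a_\epsilon}(\sigma)$, then use the $C^k$ dependence of $a\mapsto Y_a\in\mathscr X_\beta$ and transfer the bounds back through $\chi^{-1}$. Your Neumann-series bound and your chain-rule computation of $\eta,\omega$ just spell out what the paper states in one line; as you note yourself, the ``main obstacle'' is already settled by differentiability with values in $\mathscr X_\beta$.
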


\begin{proof}
 
Since $Z_\epsilon(0)\in\mathcal{V}\subset W_n \subset\Phi^{-n}\Bigl[\mathcal O_{T_*}(\delta)\cap\Phi^{-T_*}\bigl(\mathcal U_R^{ss}\cap\mathcal D_{T_*}^-\bigr)\cap\mathcal{D}_n^-\Bigr]$,
one  obtains $Z_\epsilon(T_*+n) = \Phi^{T_*+n}(Z_\epsilon(0))\in\mathcal{U}^{ss}_R$.
Hence the Lyapunov--Perron parametrization \eqref{eq:parametrization of U_R^ss}  gives
$\chi\left(Z_\epsilon(T_*+n) \right) = p(\epsilon) + h(p(\epsilon))$ for some $p(\epsilon)\in B_{\varrho(\beta,R)}$.
In particular, $p(\epsilon)=P_{ss}\chi\bigl(Z_\epsilon(T)\bigr)$ is $C^1$.

According to \Cref{lem:conjucation of flows},
\[
 \chi\bigl(Z_\epsilon(T_*+n+\sigma)\bigr) = 
 \chi\big[\Phi^\sigma(Z_\epsilon(T_*+n))\big] =
 Y_{p(\epsilon)}(\sigma),~~ \sigma\ge0.
\]
The map $p\mapsto Y_p$ is $C^1$ with values in $\mathscr X_\beta$ and so is $\epsilon\to Y_{p(\epsilon)}$; hence both
$Y_{p(0)}(\sigma) = \chi\big(\rho(T_*+n+\sigma),s(T_*+n+\sigma),v(T_*+n+\sigma),w(T_*+n+\sigma)\big)$ and its $\epsilon$-derivative are $O(e^{-\beta\sigma})$.
The same estimate for the $(s,w)$ components and their variations follows
from the bounded differential of the smooth map $\chi^{-1}$ on a smaller
coordinate neighborhood.  Absorbing the fixed entry time $T:=T_*+n$ into $C$ proves the
claim.
\end{proof}

\begin{deferredproof}{prop:branch-excess-differential}
Fix $Y=(\rho,s)\in U_i$ and $h=(a,\zeta)\in T_Y\mathcal N$.  Choose a
$C^1$ curve $Y_\epsilon$ in $U_i$ with $Y_0=Y$ and $Y_0'=h$, and set
\[
 Z_\epsilon(\tau)=\Phi^\tau\bigl(Z_i(Y_\epsilon)\bigr) = (\rho_\epsilon(\tau),s_\epsilon(\tau),v_\epsilon(\tau),w_\epsilon(\tau)),
 \quad Z_0(\tau)=(\rho(\tau),s(\tau),v(\tau),w(\tau)).
\]
In particular, $Z_0(0) = Z_i(Y)$ and $w(0) = w(Z_i(Y)) = w_i(Y)$ by \eqref{eq:regular-sheet-section}.
Let
\[
 \eta=\partial_\epsilon|_{\epsilon=0}s_\epsilon,
 \qquad
 b=\partial_\epsilon|_{\epsilon=0} v_\epsilon,
 \qquad
 \omega=\nabla_{\partial\epsilon}|_{\epsilon=0}w_\epsilon.
\]
Then $\eta(0)=\zeta$ and
$b(0)=dv_i(Y)[a,\zeta]$.  Torsion-freeness gives
$\nabla_\tau\eta=\omega$, while differentiation of $b$ with respect to $\tau$ and
of the zero-energy relation with respect to $\epsilon$ at $\epsilon = 0$ gives
\[
 b'=\langle w,\omega\rangle,
 \qquad
 vb+\langle w,\omega\rangle
 =\langle\nabla_{\mathcal E}U(s),\eta\rangle.
\]
Define
\[
 Q:=2b-\langle w,\eta\rangle.
\]
Using  the preceding identities above,
\begin{equation*}
\begin{aligned}
Q'
&=
2b'
-
\frac{d}{d\tau}
\langle w,\eta\rangle\\
&=
2\langle w,\omega\rangle
-
\left(
\langle\nabla_\tau w,\eta\rangle
+
\langle w,\nabla_\tau\eta\rangle
\right)\\
&=
2\langle w,\omega\rangle
-
\left\langle
\nabla_{\mathcal E}U(s)-\frac12vw,
\eta
\right\rangle
-
\langle w,\omega\rangle\\
&=
\langle w,\omega\rangle
-
\left\langle
\nabla_{\mathcal E}U(s),\eta
\right\rangle
+
\frac12v\langle w,\eta\rangle.
\end{aligned}
\end{equation*}
The differentiated zero-energy relation yields
$$
\langle w,\omega\rangle
-
\left\langle
\nabla_{\mathcal E}U(s),\eta
\right\rangle
=
-vb.
$$
Consequently,
\begin{equation*}
\begin{aligned}
Q'
&=
-vb
+
\frac12v\langle w,\eta\rangle\\
&=
-\frac12v
\left(
2b-\langle w,\eta\rangle
\right).
\end{aligned}
\end{equation*}
Thus
$$
Q'=-\frac12vQ.
$$
Since $\rho'=-\dfrac12v\rho$, we obtain $\left(Q/\rho\right)' = 0$, the quotient $Q/\rho$ is constant along the
variation.

We now use \Cref{lem:stable-sheet-tangent-decay}.  
Since $\nabla_{\mathcal E}U(c)=0$ and $U$ is smooth near $c$, for large $\tau$,
\[
\|\nabla_{\mathcal E}U(s(\tau))\|
\leq C d_{\mathcal E}(s(\tau),c)
\leq C e^{-\beta\tau}.
\]
and
$v(\tau)\ge v_*/2$, so the equality $vb+\langle w,\omega\rangle
 =\langle\nabla_{\mathcal E}U(s),\eta\rangle$ gives
\[
 |b(\tau)|=O(e^{-2\beta\tau}),
 \qquad |Q(\tau)|=O(e^{-2\beta\tau}).
\]
Remember that long a physical stable orbit $v'\ge0$, $v(\tau)\to v_*$ and
$v(\tau)\le v_*$.  Therefore
\[
 \rho(\tau)^{-1}
 =\rho(0)^{-1}\exp\!\left(\frac12\int_0^\tau v(\sigma)\,d\sigma\right)
 \le \rho(0)^{-1}e^{v_*\tau/2}.
\]
Because $\beta>v_*/4$, $Q(\tau)/\rho(\tau)\to0$.  Its constancy then implies
$Q(0)=0$, that is,
\[
 2\,dv_i(Y)[a,\zeta]=\langle w_i(Y),\zeta\rangle.
\]
Finally,
\[
\begin{aligned}
 d\mathcal R_i(Y)[a,\zeta]
 &=2\rho^{-2}(v_i-v_*)a-2\rho^{-1}dv_i(Y)[a,\zeta]\\
 &=2\rho^{-2}(v_i-v_*)a-
\rho^{-1}\langle w_i,\zeta\rangle.
\end{aligned}
\]
Thus
\[
 d\mathcal R_i
 =2\rho^{-2}(v_i-v_*)\,d\rho
  -\rho^{-1}\langle w_i,ds\rangle.
\]
Subtracting the formulas for $i$ and $j$ gives the stated expression for
$d(\mathcal R_i-\mathcal R_j)$.  If this differential vanishes, evaluation on radial tangent
vectors gives $v_i(Y)=v_j(Y)$, while evaluation on all
$\zeta\in T_s\mathcal E$ gives $w_i(Y)=w_j(Y)$.
Together with their common configuration $(\rho,s)$, this implies
$Z_i(Y)=Z_j(Y)$.  Taking the contrapositive proves the last assertion.

\end{deferredproof}

 \bibliographystyle{amsplain}
        \bibliography{references}

\end{document}